\newcounter{todocounter}
\DeclareDocumentCommand\addreference{g}{\stepcounter{todocounter}\todo[color = blue!30]{\thetodocounter. Add reference\IfNoValueF{#1}{: #1}}\xspace}
\DeclareDocumentCommand\checkthis{g}{\stepcounter{todocounter}\todo[color = red!50]{\thetodocounter. Check this\IfNoValueF{#1}{: #1}}\xspace}
\DeclareDocumentCommand\fixthis{g}{\stepcounter{todocounter}\todo[color = orange!50]{\thetodocounter. Fix this\IfNoValueF{#1}{: #1}}\xspace}
\DeclareDocumentCommand\expand{g}{\stepcounter{todocounter}\todo[color = green!50]{\thetodocounter. Expand\IfNoValueF{#1}{: #1}}\xspace}
\declaretheoremstyle[
  spaceabove = 3pt,
  spacebelow = 3pt,
]{lecture}
\theoremstyle{lecture}
\newtheorem{theorem}{Theorem}
\newtheorem{corollary}[theorem]{Corollary}
\newtheorem{definition}[theorem]{Definition}
\newtheorem{example}[theorem]{Example}
\newtheorem{lemma}[theorem]{Lemma}
\newtheorem{proposition}[theorem]{Proposition}
\newtheorem{remark}[theorem]{Remark}
\newtheorem{alphatheorem}{Theorem}
\crefname{alphatheorem}{Theorem}{Theorems}
\def\gitfootnote{\gdef\@thefnmark{}\@footnotetext}
\mathchardef\mhyphen="2D
\newcommand\dash{\nobreakdash-\hspace{0pt}}
\let\oldbigwedge\bigwedge
\renewcommand\bigwedge{\oldbigwedge\nolimits}
\newcommand\hilbtwo[1][X]{\ensuremath{{#1}^{[2]}}} 
\newcommand\hilbn[2]{\ensuremath{#2^{[#1]}}} 
\newcommand\bounded{\ensuremath{\mathrm{b}}}
\newcommand\LLL{\ensuremath{\mathbf{L}}}
\newcommand\RR{\ensuremath{\mathrm{R}}}
\newcommand\RRR{\ensuremath{\mathbf{R}}}
\newcommand\tangent{\ensuremath{\mathrm{T}}}
\DeclareMathOperator\Bl{Bl}
\DeclareMathOperator\ch{ch}
\DeclareMathOperator\coh{coh}
\DeclareMathOperator\Def{Def}
\DeclareMathOperator\derived{\mathbf{D}}
\DeclareMathOperator\Ext{Ext}
\DeclareMathOperator\HH{H}
\DeclareMathOperator\hh{h}
\DeclareMathOperator\HHHH{HH}
\DeclareMathOperator\hodgepoly{e}
\DeclareMathOperator\Hom{Hom}
\DeclareMathOperator\identity{id}
\DeclareMathOperator\moduli{M}
\DeclareMathOperator\Pic{Pic}
\DeclareMathOperator\sheafExt{\mathcal{E}xt}
\DeclareMathOperator\RsheafHom{\mathbf{R}\mathcal{H}om}
\DeclareMathOperator\sheafHom{\mathcal{H}om}
\DeclareMathOperator\Spec{Spec}
\DeclareMathOperator\Supp{Supp}
\DeclareMathOperator\Sym{Sym}
\DeclareMathOperator\td{td}
\title{Hilbert squares: derived categories and deformations}
\author{Pieter Belmans \and Lie Fu \and Theo Raedschelders}
\begin{document}
\maketitle

\begin{abstract}
  For a smooth projective variety $X$ with exceptional structure sheaf, and $\hilbtwo$ the Hilbert scheme of two points on $X$, we show that the Fourier--Mukai functor $\derived^\bounded(X) \to \derived^\bounded(\hilbtwo)$ induced by the universal ideal sheaf is fully faithful, provided the dimension of $X$ is at least 2. This fully faithfulness allows us to construct a spectral sequence relating the deformation theories of $X$ and $\hilbtwo$ and to show that it degenerates at the second page, giving a Hochschild--Kostant--Rosenberg-type filtration on the Hochschild cohomology of $X$. These results generalise known results for surfaces due to Krug--Sosna, Fantechi and Hitchin. Finally, as a by-product, we discover the following surprising phenomenon: for a smooth projective variety of dimension at least 3 with exceptional structure sheaf, it is rigid if and only if its Hilbert scheme of two points is rigid. This last fact contrasts drastically to the surface case: non-commutative deformations of a surface contribute to commutative deformations of its Hilbert square.
\end{abstract}



\tableofcontents

\section{Introduction}
\label{section:introduction}
Hilbert schemes of points are one of the most tractable and well-studied moduli spaces of sheaves. In this paper we discuss them from the point of view of derived categories and their deformations. For surfaces the situation is well-understood: the deformation theory is described by Fantechi and Hitchin in \cite{MR1354269,MR3024823}, and the derived categories of Hilbert schemes are studied by e.g.~Bridgeland--King--Reid, Haiman and Krug--Sosna \cite{MR3397451,MR1839919,MR1824990}.

For varieties of dimension~$\geq3$, a first obstacle is that the Hilbert scheme of points is not smooth, unless~$n=2,3$ \cite[\S0.2]{MR1616606}. In this paper we focus on the case of~$n=2$, i.e.~that of \emph{Hilbert squares}, and explain how results from the surface case generalise to higher dimensions.

More precisely, in \cite[theorem~9]{MR3024823}, Hitchin shows that for every smooth projective surface~$S$ with~$\HH^1(S,\mathcal{O}_S)=0$ there is a short exact sequence
\begin{equation}
  \label{equation:hitchin-short-exact}
  0 \to \HH^1(S,\tangent_S) \to \HH^1(\hilbn{n}{S},\tangent_{\hilbn{n}{S}}) \to \HH^0(S,\omega_S^{-1}) \to 0.
\end{equation}
This allows Hitchin to interpret the deformation theory of the Hilbert scheme~$\hilbn{n}{S}$ of~$n$~points on~$S$ in terms of the deformation theory and the Poisson geometry of~$S$.

Hitchin's construction is very geometric, and the observation that started this work is that \eqref{equation:hitchin-short-exact} can be recovered more abstractly using derived categories and Hochschild cohomology. For this we use a result due to Krug and Sosna: for a smooth projective surface~$S$ for which~$\mathcal{O}_S$ is exceptional (i.e.~$\HH^i(S,\mathcal{O}_S)=0$ for~$i\geq 1$) the Fourier--Mukai functor
\begin{equation}
  \Phi_{\mathcal{I}}\colon\derived^\bounded(S)\to\derived^\bounded(\hilbn{n}{S}),
\end{equation}
whose kernel~$\mathcal{I}$ is the ideal sheaf of the universal family~$Z\hookrightarrow S\times\hilbn{n}{S}$, is fully faithful \cite[theorem 1.2]{MR3397451}. It follows that there is an isomorphism
\begin{equation}
  \label{equation:hhiso}
  \HHHH^\bullet(S) \cong \Ext^\bullet_{S \times \hilbn{n}{S}}(\mathcal{I},\mathcal{I}).
\end{equation}
Moreover, the relative local-to-global Ext spectral sequence for the second projection~$S \times \hilbn{n}{S} \to \hilbn{n}{S}$ computing the right-hand side in \eqref{equation:hhiso} degenerates for dimension reasons, which readily allows one to recover \eqref{equation:hitchin-short-exact}, as we explain in \cref{subsection:degeneration-surfaces}.

To understand whether Hitchin's deformation-theoretic results for surfaces have an analogue in higher dimensions, it is therefore natural to first try and generalise the result of Krug--Sosna, and our first main result does exactly this.

\begin{alphatheorem}
  \label{theorem:fully-faithful}
  Let~$X$ be a smooth projective variety of dimension at least~2, such that~$\mathcal{O}_X$ is exceptional. Then
  \begin{equation}
    \Phi_{\mathcal{I}}\colon\derived^\bounded(X)\to\derived^\bounded(\hilbtwo)
  \end{equation}
  is fully faithful.
\end{alphatheorem}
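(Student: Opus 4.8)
The plan is to verify the Bondal--Orlov--Bridgeland spanning-class criterion: since the skyscrapers $k(x)$ ($x\in X$ closed) span $\derived^\bounded(X)$, it suffices to prove that the objects $P_x:=\Phi_{\mathcal{I}}(k(x))$ satisfy $\Hom_{\hilbtwo}^\bullet(P_x,P_{x'})=k$ in degree $0$ when $x=x'$, vanish in degrees outside $[0,n]$ (with $n:=\dim X$) for $x=x'$, and vanish in all degrees for $x\neq x'$. First I would compute $P_x$. The universal subscheme gives a short exact sequence $0\to\mathcal{I}\to\mathcal{O}_{X\times\hilbtwo}\to\mathcal{O}_Z\to 0$, so applying the exact functor $\Phi_{-}(k(x))$ produces a triangle $P_x\to\Phi_{\mathcal{O}}(k(x))\to\Phi_{\mathcal{O}_Z}(k(x))$. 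Here $\Phi_{\mathcal{O}}(k(x))=\mathcal{O}_{\hilbtwo}$, and, identifying $Z\cong\Bl_\Delta(X\times X)$, the first projection $Z\to X$ is flat with fibre over $x$ the incidence locus $D_x=\{\,\xi : x\in\xi\,\}\cong\Bl_x X$, a smooth subvariety of codimension $n$; Tor-independence then gives $\Phi_{\mathcal{O}_Z}(k(x))=\mathcal{O}_{D_x}$, and the triangle identifies $P_x$ with the ideal sheaf $\mathcal{I}_{D_x}$.

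Next I would record two consequences of exceptionality of $\mathcal{O}_X$. Writing $\hilbtwo=\Bl_\Delta(X\times X)/(\mathbb{Z}/2)$ one gets $\RRR\Gamma(\hilbtwo,\mathcal{O})=\Sym^2\RRR\Gamma(X,\mathcal{O}_X)=k$, while $\RRR\Gamma(D_x,\mathcal{O})=\RRR\Gamma(\Bl_x X,\mathcal{O})=\RRR\Gamma(X,\mathcal{O}_X)=k$; the restriction between them is an isomorphism, so $\RRR\Gamma(\hilbtwo,\mathcal{I}_{D_x})=0$. Using this vanishing, applying $\RRR\Hom(-,\mathcal{I}_{D_{x'}})$ to the first structure sequence gives $\RRR\Hom(\mathcal{I}_{D_x},\mathcal{I}_{D_{x'}})\cong\RRR\Hom(\mathcal{O}_{D_x},\mathcal{I}_{D_{x'}})[1]$, and then $\RRR\Hom(\mathcal{O}_{D_x},-)$ applied to the second sequence reduces everything to the two groups $\RRR\Hom(\mathcal{O}_{D_x},\mathcal{O}_{\hilbtwo})=\RRR\Gamma(D_x,\det\normal_{D_x/\hilbtwo})[-n]$ and $\RRR\Hom(\mathcal{O}_{D_x},\mathcal{O}_{D_{x'}})$. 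When $x=x'$ this is the self-$\Ext$ of the smooth subvariety $D_x$; combined with $\dim D_x=n$ and $\RRR\Gamma(\mathcal{O}_{D_x})=k$ it immediately yields $\Hom^0(P_x,P_x)=k$ and vanishing in all degrees outside $[0,n]$, so conditions for $x=x'$ hold.

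The crux is the case $x\neq x'$. Here $D_x$ and $D_{x'}$ meet in the single point $[\{x,x'\}]$, and transversally, since their tangent spaces are the complementary summands $\tangent_{x'}X$ and $\tangent_x X$ of $\tangent_{[\xi]}\hilbtwo=\tangent_xX\oplus\tangent_{x'}X$; a local Koszul computation then gives $\RRR\Hom(\mathcal{O}_{D_x},\mathcal{O}_{D_{x'}})=k[-n]$. Orthogonality therefore hinges on the line bundle $\det\normal_{D_x/\hilbtwo}$, which I would compute by adjunction on $\Bl_\Delta(X\times X)$: the canonical-bundle and ramification contributions conspire so that the pullbacks of $\omega_X$ cancel and $\det\normal_{D_x/\hilbtwo}\cong\mathcal{O}_{D_x}(E)$, with $E$ the exceptional divisor of $D_x=\Bl_x X$. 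Since $n\geq 2$, the restriction $\mathcal{O}_{D_x}(E)|_E\cong\mathcal{O}_{\mathbb{P}^{n-1}}(-1)$ is acyclic, so $\RRR\pi_*\mathcal{O}_{\Bl_xX}(E)=\mathcal{O}_X$ and $\RRR\Gamma(D_x,\det\normal_{D_x/\hilbtwo})=\RRR\Gamma(X,\mathcal{O}_X)=k$; the comparison map to $\RRR\Hom(\mathcal{O}_{D_x},\mathcal{O}_{D_{x'}})=k[-n]$ is then an isomorphism, whence $\RRR\Hom(P_x,P_{x'})=0$.

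I expect the main obstacle to be precisely this normal-bundle analysis: establishing $Z\cong\Bl_\Delta(X\times X)$ together with the flatness of $Z\to X$, pinning down $\det\normal_{D_x/\hilbtwo}\cong\mathcal{O}_{D_x}(E)$ through the double cover and its adjunction, and verifying the transversality of $D_x\cap D_{x'}$. It is worth emphasising that the cohomological vanishing $\RRR\Gamma(\Bl_x X,\mathcal{O}(E))=k$ is exactly what fails in dimension one, and it is this point that forces the hypothesis $\dim X\geq 2$; once it is in place, the criterion is satisfied and $\Phi_{\mathcal{I}}$ is fully faithful.
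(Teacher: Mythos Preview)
Your approach via the Bondal--Orlov criterion is genuinely different from the paper's. The paper never works pointwise: it shows directly that the Fourier--Mukai kernel of $R\circ F$ is $\mathcal{O}_{\Delta}$ by computing the convolutions $(\mathcal{O}_Z^\vee\otimes p_X^*\omega_X[d])\ast\mathcal{I}$ and $(p_X^*\omega_X[d])\ast\mathcal{I}$ separately, using the explicit geometry of $Z\cong\Bl_\Delta(X\times X)$ and the factorisation of $Z_1\cup Z_2$ in $X\times Z$. Your route is more elementary in spirit---identifying $P_x=\mathcal{I}_{D_x}$ and reducing to cohomology on $D_x\cong\Bl_xX$---and the treatment of the case $x\neq x'$ (transversality of $D_x\cap D_{x'}$, the identification $\det\mathcal{N}_{D_x/\hilbtwo}\cong\mathcal{O}_{D_x}(E)$, and the evaluation of the canonical section at the reduced point $\{x,x'\}\notin E$) is clean and correct.

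There is, however, a real gap in the case $x=x'$. The Bondal--Orlov criterion requires $\Ext^i(P_x,P_x)=0$ for $i>n$, and this does \emph{not} follow ``immediately'' from $\dim D_x=n$ and $\RRR\Gamma(\mathcal{O}_{D_x})=k$. Tracing through your exact sequences, one finds that for $i>n$ the relevant long exact sequence gives $\Ext^i(P_x,P_x)\cong\Ext^i_{\hilbtwo}(\mathcal{O}_{D_x},\mathcal{O}_{D_x})$, and the latter is governed by the groups $\HH^p(D_x,\bigwedge^q\mathcal{N}_{D_x/\hilbtwo})$ for $p+q=i$, which can in principle be nonzero up to $i=2n$. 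Establishing their vanishing requires a genuine analysis of $\mathcal{N}_{D_x/\hilbtwo}$ (for instance via the restriction of the sequence $0\to\pi^*p_1^*\tangent_X\to\mathcal{N}_{Z/X\times\hilbtwo}\to j_*\mathcal{O}_E(2E)\to 0$ to $D_x$, followed by an exterior-power and projective-bundle computation on $E_x\cong\mathbb{P}^{n-1}$). This is comparable in difficulty to the paper's \cref{lemma:wedge-vanishing}, and is precisely the kind of work the paper's kernel-level argument manages to sidestep. Your strategy can be completed, but this step needs to be written out.
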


It is noteworthy to mention that whereas the proof in \cite{MR3397451} relies on the Bridgeland--King--Reid--Haiman equivalence \cite{MR1824990,MR1839919}, our proof of \cref{theorem:fully-faithful} uses the explicit geometry of~$\hilbtwo$, which works in arbitrary dimension. In particular, it provides an alternative proof of the Krug--Sosna result in the case of~2~points. One should note that there are \emph{other} fully faithful functors from~$\derived^\bounded(X)$ into~$\derived^\bounded(\hilbtwo)$, see \cite{MR3811590}, which we discuss in \cref{remark:krug-ploog-sosna}. More recently it was shown that~$\Phi_{\mathcal{I}}$ is \emph{faithful}, for all~$X$ and~$n$ subject to a numerical condition which is satisfied when~$n=2$ \cite[theorem~1.3]{1808.05931v1}. We come back to this in \cref{remark:krug}. However, for our purposes it is important to both use the universal ideal sheaf~$\mathcal{I}$ for the kernel, and that the functor is \emph{full} and faithful.

One can then use a similar spectral sequence argument as in the case of surfaces to obtain generalisations of \eqref{equation:hitchin-short-exact}, and relate the deformation theories of~$X$ and~$\hilbtwo$. In higher dimensions, the degeneration does not come for free, and our second main theorem deals with this. The following statement is an abridged version of \cref{theorem:spectral-sequence}.

\begin{alphatheorem}
  \label{theorem:B}
  Let~$X$ be a smooth projective variety of dimension~$\geq 2$, such that~$\mathcal{O}_X$ is exceptional. The relative local-to-global Ext spectral sequence
  \begin{equation}
    \mathrm{E}_2^{i,j}=\HH^i(\hilbtwo,\sheafExt_{p_{\hilbtwo}}^j(\mathcal{I},\mathcal{I}))\Rightarrow\Ext_{X\times\hilbtwo}^{i+j}(\mathcal{I},\mathcal{I}).
  \end{equation}
  degenerates at the~$\mathrm{E}_2$\dash page. Moreover, the abutment can be identified with the Hochschild cohomology of~$X$, such that the induced filtration coincides with the filtration associated to the Hochschild--Kostant--Rosenberg decomposition, up to a degree shift.
\end{alphatheorem}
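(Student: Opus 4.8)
The plan is to identify the abutment first and then compute the $\mathrm{E}_2$\dash page explicitly enough to run a dimension count. For the abutment, \cref{theorem:fully-faithful} yields, exactly as in \eqref{equation:hhiso}, a canonical isomorphism $\HHHH^\bullet(X)\cong\Ext^\bullet_{X\times\hilbtwo}(\mathcal{I},\mathcal{I})$, so by the Hochschild--Kostant--Rosenberg decomposition $\HHHH^n(X)\cong\bigoplus_{p+q=n}\HH^p(X,\bigwedge^q\tangent_X)$ the total dimension of the abutment is $\sum_{p,q}\dim_k\HH^p(X,\bigwedge^q\tangent_X)$. To get at the relative Ext sheaves I would first record the local ones: writing $Z\hookrightarrow X\times\hilbtwo$ for the universal subscheme and using $0\to\mathcal{I}\to\mathcal{O}\to\mathcal{O}_Z\to 0$, a Koszul computation---valid because $Z$ is smooth, hence regularly embedded of codimension $d:=\dim X$---gives $\sheafExt^0(\mathcal{I},\mathcal{I})=\mathcal{O}_{X\times\hilbtwo}$, $\sheafExt^j(\mathcal{I},\mathcal{I})\cong\bigwedge^j N$ for $1\le j\le d-1$ with $N:=N_{Z/X\times\hilbtwo}$, and $\sheafExt^{\ge d}(\mathcal{I},\mathcal{I})=0$.

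Since $p_{\hilbtwo}|_Z$ is finite and $\HH^{>0}(X,\mathcal{O}_X)=0$, the spectral sequence $\RR^ap_{\hilbtwo,*}\sheafExt^b(\mathcal{I},\mathcal{I})\Rightarrow\sheafExt^{a+b}_{p_{\hilbtwo}}(\mathcal{I},\mathcal{I})$ collapses, giving $\sheafExt^0_{p_{\hilbtwo}}(\mathcal{I},\mathcal{I})=\mathcal{O}_{\hilbtwo}$ and $\sheafExt^j_{p_{\hilbtwo}}(\mathcal{I},\mathcal{I})=\pi_*\bigwedge^j N$ for $1\le j\le d-1$, where $\pi\colon W:=\Bl_\Delta(X\times X)\to\hilbtwo$ is the canonical double cover (recall $Z\cong W$, the blow\dash up of the diagonal, with $\pi$ the quotient by the involution, \emph{ramified} along the exceptional divisor $E\cong\mathbb{P}(\tangent_X)$). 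Finiteness of $\pi$ then gives $\mathrm{E}_2^{i,j}=\HH^i(W,\bigwedge^j N)$ for $j\ge 1$ and $\mathrm{E}_2^{i,0}=\HH^i(\hilbtwo,\mathcal{O}_{\hilbtwo})\cong\HH^i(X,\mathcal{O}_X)$. The crucial geometric input is the normal bundle: writing $q_1\colon W\to X$ for the first projection, the ramification of $\pi$ produces a short exact sequence
\begin{equation*}
  0\to q_1^*\tangent_X\to N\to \iota_*\mathcal{O}_E(2E)\to 0,
\end{equation*}
with $\iota\colon E\hookrightarrow W$; off $E$ this recovers $N\cong q_1^*\tangent_X$, while the correction term along $E$ is what ultimately produces the top polyvector piece.

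With this in hand I would compute $\HH^i(W,\bigwedge^j N)$ by taking exterior powers of the displayed sequence. The leading piece $\bigwedge^j q_1^*\tangent_X$ contributes $\HH^i(X,\bigwedge^j\tangent_X)$, using $\RR\beta_*\mathcal{O}_W=\mathcal{O}_{X\times X}$ for the blow\dash down $\beta\colon W\to X\times X$ together with Künneth and $\HH^{>0}(X,\mathcal{O}_X)=0$. The cokernel contributes $\HH^\bullet(E,\bigwedge^{j-1}\mathcal{Q}\otimes\mathcal{O}_E(2E))$, where $\mathcal{Q}$ is the tautological quotient on $E=\mathbb{P}(\tangent_X)$; pushing forward along $\rho\colon E\to X$ and applying the Bott formula fibrewise on $\mathbb{P}^{d-1}$ should show that this vanishes for all $j\ne d-1$, while for $j=d-1$ one finds $\RR\rho_*\big(\bigwedge^{d-2}\mathcal{Q}\otimes\mathcal{O}_E(2E)\big)\cong\omega_X^{-1}[-1]=\bigwedge^d\tangent_X[-1]$. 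I expect pinning down this exceptional contribution---identifying the tautological twist precisely and evaluating the fibrewise cohomology with the correct global line bundle---to be the main obstacle; it is exactly where the higher\dash dimensional geometry departs from the surface case, and where the piece $\bigwedge^d\tangent_X=\omega_X^{-1}$, absent from the local Ext sheaves, re\dash enters through the ramification divisor with a cohomological degree shift by one.

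Granting the computation, the $\mathrm{E}_2$\dash page reads $\mathrm{E}_2^{i,j}=\HH^i(X,\bigwedge^j\tangent_X)$ for $0\le j\le d-2$ and $\mathrm{E}_2^{i,d-1}=\HH^i(X,\bigwedge^{d-1}\tangent_X)\oplus\HH^{i-1}(X,\bigwedge^d\tangent_X)$. Degeneration then follows from a dimension sandwich: the subquotient bound $\dim_k\mathrm{E}_\infty\le\dim_k\mathrm{E}_2$, the subadditivity bound $\dim_k\mathrm{E}_2\le\sum_{p,q}\dim_k\HH^p(X,\bigwedge^q\tangent_X)$ coming from the long exact sequences of the normal bundle sequence, and the equality $\dim_k\mathrm{E}_\infty=\sum_{p,q}\dim_k\HH^p(X,\bigwedge^q\tangent_X)$ from the abutment together force all three to agree. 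Equality in the first bound is precisely degeneration at $\mathrm{E}_2$, and equality in the second forces the connecting maps in the normal bundle sequence to vanish, confirming the displayed $\mathrm{E}_2$\dash page. Reading off the induced filtration on $\Ext^n(\mathcal{I},\mathcal{I})\cong\HHHH^n(X)$, the graded piece $\mathrm{gr}^j$ is $\HH^{n-j}(X,\bigwedge^j\tangent_X)$ for $j\le d-2$ and $\HH^{n-d+1}(X,\bigwedge^{d-1}\tangent_X)\oplus\HH^{n-d}(X,\bigwedge^d\tangent_X)$ for $j=d-1$; comparing with the Hochschild--Kostant--Rosenberg filtration by polyvector degree $q$ identifies the two filtrations once the top piece $\bigwedge^d\tangent_X$ is placed in vertical degree $d-1$ rather than $d$, which is the asserted degree shift.
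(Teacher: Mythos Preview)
Your outline is correct and follows essentially the same route as the paper: identify the abutment via \cref{theorem:fully-faithful}, compute $\sheafExt^j_{p_{\hilbtwo}}(\mathcal{I},\mathcal{I})\cong q_*\bigwedge^j\mathcal{N}_{Z/X\times\hilbtwo}$, use the normal bundle sequence $0\to q_1^*\tangent_X\to\mathcal{N}\to\iota_*\mathcal{O}_E(2E)\to 0$ and its exterior powers, evaluate the projective bundle contribution, and finish with the dimension sandwich. Your identification of the cokernel as $\iota_*(\bigwedge^{j-1}\mathcal{Q}\otimes\mathcal{O}_E(2E))$ is exactly what the paper obtains (written there as $j_*\bigwedge^{j-1}\tangent_p(-j-1)$), and your Bott computation agrees with their explicit Euler-sequence argument.

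Two small points. First, your computation of the relative Ext sheaves via \emph{local} Ext plus the collapsing spectral sequence $\RR^ap_{\hilbtwo,*}\sheafExt^b\Rightarrow\sheafExt^{a+b}_{p_{\hilbtwo}}$ is a nice shortcut; the paper instead runs a $3\times 3$ diagram of distinguished triangles obtained by tensoring the universal sequence with its dual and pushing forward. Your argument is more direct and uses only that $q$ is finite and $\mathcal{O}_X$ is exceptional. Second, you write $\mathrm{E}_2^{i,d-1}=\HH^i(X,\bigwedge^{d-1}\tangent_X)\oplus\HH^{i-1}(X,\omega_X^{-1})$ as a direct sum, but the argument only produces an \emph{extension}: the dimension sandwich forces the connecting maps in the long exact sequence to vanish, yielding short exact sequences (this is the paper's \cref{corollary:short-exact}), not a canonical splitting. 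This does not affect the degeneration or the identification of the associated graded, so the proof stands; just soften that statement to an extension.
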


We then proceed to study the deformation theory of~$\hilbtwo$ for~$\dim X\geq 3$, where we obtain the following result, showing that the surface case is fundamentally different from the higher-dimensional situation.

\begin{alphatheorem}
  \label{theorem:C}
  Let~$X$ be a smooth projective variety of dimension at least~3, such that~$\mathcal{O}_X$ is exceptional. There exist isomorphisms
  \begin{equation}
    \HH^i(X,\tangent_X)\overset{\cong}{\to}\HH^i(\hilbtwo,\tangent_{\hilbtwo})
  \end{equation}
  for all~$i\geq 0$.
\end{alphatheorem}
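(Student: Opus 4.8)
The plan is to deduce the stated isomorphisms from the degenerate spectral sequence of \cref{theorem:B} by isolating its $j=1$ row; the point is that for $\dim X\geq 3$ the relative local Ext\dash sheaf $\sheafExt^1_{p}(\mathcal I,\mathcal I)$ is \emph{exactly} the tangent sheaf $\tangent_{\hilbtwo}$, with none of the extra contributions that produce Hitchin's sequence \eqref{equation:hitchin-short-exact} in the surface case. Throughout, $p\colon X\times\hilbtwo\to\hilbtwo$ denotes the projection and $Z\hookrightarrow X\times\hilbtwo$ the universal subscheme with ideal sheaf $\mathcal I$; since $Z\to\hilbtwo$ is finite flat of degree $2$, the subscheme $Z$ is Cohen--Macaulay of codimension $\dim X$ in $X\times\hilbtwo$.

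First I would compute the two lowest relative local Ext\dash sheaves. Applying $\sheafExt^{\bullet}_{p}(\mathcal I,-)$ to the structure sequence $0\to\mathcal I\to\mathcal O_{X\times\hilbtwo}\to\mathcal O_{Z}\to 0$ and using that $\mathcal I$ is the ideal sheaf of a subscheme of codimension at least $2$ gives $\sheafHom_{p}(\mathcal I,\mathcal O_{X\times\hilbtwo})=\mathcal O_{\hilbtwo}$, whereas the standard description of the tangent sheaf of a smooth Hilbert scheme gives $\sheafHom_{p}(\mathcal I,\mathcal O_{Z})=\tangent_{\hilbtwo}$. As the composite $\mathcal I\to\mathcal O_{X\times\hilbtwo}\to\mathcal O_{Z}$ is zero, the connecting homomorphism exhibits $\tangent_{\hilbtwo}$ as a subsheaf of $\sheafExt^{1}_{p}(\mathcal I,\mathcal I)$ whose cokernel embeds into $\sheafExt^{1}_{p}(\mathcal I,\mathcal O_{X\times\hilbtwo})$.

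The crucial step, and the only place where the hypothesis $\dim X\geq 3$ really enters, is the vanishing of this cokernel. From the same structure sequence one gets $\sheafExt^{1}(\mathcal I,\mathcal O_{X\times\hilbtwo})\cong\sheafExt^{2}(\mathcal O_{Z},\mathcal O_{X\times\hilbtwo})$, which is zero because $Z$ is Cohen--Macaulay of codimension $\dim X>2$. Plugging this into the local-to-global spectral sequence $R^{a}p_{*}\sheafExt^{b}(\mathcal I,\mathcal O_{X\times\hilbtwo})\Rightarrow\sheafExt^{a+b}_{p}(\mathcal I,\mathcal O_{X\times\hilbtwo})$, the only contributions to degree $1$ are the $(a,b)=(0,1)$ term, just seen to vanish, and the $(a,b)=(1,0)$ term $R^{1}p_{*}\mathcal O_{X\times\hilbtwo}\cong\HH^{1}(X,\mathcal O_{X})\otimes\mathcal O_{\hilbtwo}$, which vanishes because $\mathcal O_{X}$ is exceptional. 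Hence $\sheafExt^{1}_{p}(\mathcal I,\mathcal O_{X\times\hilbtwo})=0$ and
\[
  \sheafExt^{1}_{p}(\mathcal I,\mathcal I)\cong\tangent_{\hilbtwo}.
\]
For a surface the $(0,1)$ term $p_{*}\sheafExt^{2}(\mathcal O_{Z},\mathcal O_{S\times\hilbtwo})$ survives and equals the summand $\HH^{0}(S,\omega_{S}^{-1})$ of \eqref{equation:hitchin-short-exact}, explaining the extra deformations of the Hilbert square in that case.

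Finally I would extract the conclusion from \cref{theorem:B}. Substituting $\sheafExt^{1}_{p}(\mathcal I,\mathcal I)=\tangent_{\hilbtwo}$ turns the $j=1$ row of the spectral sequence into $\mathrm{E}_{2}^{i,1}=\HH^{i}(\hilbtwo,\tangent_{\hilbtwo})$. Since it degenerates at the $\mathrm{E}_{2}$\dash page, each such entry is a genuine graded piece of the abutment $\Ext^{i+1}_{X\times\hilbtwo}(\mathcal I,\mathcal I)\cong\HHHH^{i+1}(X)$, and the identification of the induced filtration with the Hochschild--Kostant--Rosenberg filtration matches this $j=1$ piece with the polyvector summand $\HH^{i}(X,\bigwedge^{1}\tangent_{X})=\HH^{i}(X,\tangent_{X})$. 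Composing these isomorphisms gives $\HH^{i}(X,\tangent_{X})\cong\HH^{i}(\hilbtwo,\tangent_{\hilbtwo})$ for all $i\geq 0$, as wanted. I expect the main obstacle to be precisely the vanishing $\sheafExt^{1}_{p}(\mathcal I,\mathcal O_{X\times\hilbtwo})=0$, which is where both hypotheses, $\dim X\geq 3$ and exceptionality of $\mathcal O_{X}$, are indispensable; a secondary point requiring care is tracking the degree shift in \cref{theorem:B}, so that the $j=1$ row is matched with the $\bigwedge^{1}\tangent_{X}$ summand and not a neighbouring one.
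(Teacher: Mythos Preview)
Your argument is correct and is essentially the route the paper itself mentions just before the self-contained proof: ``the following theorem can be deduced from the previous section''. You extract the $j=1$ row of the degenerate spectral sequence of \cref{theorem:B}, identify $\sheafExt^{1}_{p}(\mathcal I,\mathcal I)\cong\tangent_{\hilbtwo}$, and read off $\mathrm{E}_2^{i,1}=\HH^i(X,\tangent_X)$ from the HKR identification. The paper's own proof instead works directly with the geometry of the blowup, showing $\RRR\pi_*\mathcal{N}_{Z/X\times\hilbtwo}\cong p_1^*\tangent_X$ via the vanishing $\HH^i(\mathbb{P}^{d-1},\mathcal{O}(-2))=0$ for $d\geq 3$, then applying Leray and the identification $q_*\mathcal{N}_{Z/X\times\hilbtwo}\cong\tangent_{\hilbtwo}$ from \cref{proposition:iso-tangent}. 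Your approach has the virtue of making the link to Hochschild cohomology and \cref{theorem:B} explicit; the paper's has the virtue of being independent of the degeneration argument.

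One point deserves correction, though it does not affect the validity of your proof. You locate the use of $d\geq 3$ in the isomorphism $\sheafExt^{1}_{p}(\mathcal I,\mathcal I)\cong\tangent_{\hilbtwo}$, and claim that for surfaces this sheaf acquires an extra piece $p_*\sheafExt^{2}(\mathcal O_Z,\mathcal O)$ responsible for the $\HH^0(S,\omega_S^{-1})$ term in Hitchin's sequence. This is not right: \cref{proposition:iso-tangent} proves $\sheafExt^{1}_{p}(\mathcal I,\mathcal I)\cong\tangent_{\hilbtwo}$ for \emph{all} $d\geq 2$, via the moduli interpretation of $\hilbtwo$. Your codimension argument is a valid alternative proof of this isomorphism when $d\geq 3$, but the isomorphism itself does not fail for surfaces; rather, the map $\sheafExt^{1}_{p}(\mathcal I,\mathcal I)\to\sheafExt^{1}_{p}(\mathcal I,\mathcal O)$ is still zero there. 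The genuine dependence on $d\geq 3$ in your argument is in the last step: in \cref{theorem:spectral-sequence} the identification $\mathrm{E}_2^{i,j}\cong\HH^i(X,\bigwedge^j\tangent_X)$ holds only for $1\leq j\leq d-2$, so $j=1$ requires $d\geq 3$. For $d=2$ one has $j=1=d-1$, and $\mathrm{E}_2^{i,1}$ is the extension of $\HH^{i-1}(S,\omega_S^{-1})$ by $\HH^i(S,\tangent_S)$ given in \cref{corollary:short-exact}; \emph{that} is the source of Hitchin's extra term.
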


This gives the following no-go result: if~$X$ is rigid, then so is~$\hilbtwo$. This is in stark contrast to the surface case: e.g.~if~$S=\mathbb{P}^2$ then~$S$ is rigid, but~$\hilbn{n}{S}$ is not \cite{MR2303228,MR3669875}, for~$n \geq 2$. These deformations are associated to noncommutative deformations of~$S$. We come back to this in \cref{remark:noncommutative-deformations}.

For Hilbert cubes we expect similar results to hold, but the geometric description of~$\hilbn{3}{X}$ is more involved, making the analysis harder.

The paper is structured as follows. In \cref{section:preliminaries} we recall some preliminaries regarding the geometry of Hilbert squares and Hochschild cohomology for smooth projective varieties. The proof of \cref{theorem:fully-faithful} is given in \cref{section:fully-faithful}.

In \cref{section:relative-ext} we prove \cref{theorem:B}, in the form of \cref{theorem:spectral-sequence}. The proof in arbitrary dimension is given in \cref{subsection:degeneration}, whilst the special case of surfaces is discussed with more details in \cref{subsection:degeneration-surfaces}.

Finally, in \cref{section:deformation-hilbert-squares} we discuss the deformation theory of Hilbert squares. In \cref{subsection:dimension-2} we explain how the methods in this paper recover known results for the deformation theory of Hilbert squares of surfaces, and speculate on a noncommutative generalisation of our results. In \cref{subsection:dim-3-interpretation} we prove \cref{theorem:C}.

\paragraph{Acknowledgements}
We would like to thank Daniel Huybrechts, Zhi Jiang, Joe Karmazyn and Alexander Perry for interesting discussions.

The first author is supported by the Max Planck Institute for Mathematics in Bonn. The second author is supported by ECOVA (ANR-15-CE40-0002), HodgeFun (ANR-16-CE40-0011), LABEX MILYON (ANR-10-LABX-0070) of Universit\'e de Lyon and \emph{Projet Inter-Laboratoire} 2017 and 2018 by F\'ed\'eration de Recherche en Math\'ematiques Rh\^one-Alpes/Auvergne CNRS 3490. The third author is supported by an EPSRC postdoctoral fellowship EP/R005214/1.

\section{Preliminaries}
\label{section:preliminaries}
In this section we discuss some preliminaries, and introduce the notation that is used throughout.

\subsection{Hilbert squares}
\label{subsection:hilbert-squares}
The Hilbert scheme of length-2 subschemes (or \emph{Hilbert square}) of~$X$ is denoted ~$\hilbtwo$, and is a smooth projective variety of dimension~$2d$. We will often make use of the diagram
\begin{equation}
  \label{equation:blowup-diagram}
  \begin{tikzcd}
    E \arrow[dr, phantom, "\square"] \arrow[r, hook, "j"] \arrow[d, "p"] & \Bl_\Delta(X \times X) \arrow[r, "q"] \ar[d, "\pi"] & \hilbtwo \\
    X \arrow[r, hook, "\Delta"] & X \times X
  \end{tikzcd}
\end{equation}
where the left square is the blowup square of~$X \times X$ along its diagonal. The natural involution on~$X \times X$ lifts to the blowup and the quotient is canonically isomorphic to the Hilbert square of~$X$, denoted by~$\hilbtwo$. The exceptional divisors in~$\hilbtwo$ and~$\Bl_\Delta(X \times X)$ can be compared using the following diagram
\begin{equation}
  \label{equation:divisor-diagram}
  \begin{tikzcd}
    E \arrow[r, hook] \arrow[dr, swap, "\cong"] & 2E \arrow[dr, phantom, "\square"] \arrow[d] \arrow[r, hook] & \Bl_\Delta(X \times X) \arrow[d, "q"] \\
    & D \arrow[r, hook] & \hilbtwo
  \end{tikzcd}
\end{equation}
where~$2E$ is the second infinitesimal neighborhood of~$E$ in~$\Bl_\Delta(X \times X)$, and~$D$ is the (isomorphic) image of~$E$ by~$q$ which is the locus parametrizing non-reduced length-2 subschemes of~$X$. The divisors on~$\hilbtwo$ are well understood up to rational equivalence:
\begin{equation}
  \Pic(\hilbtwo) \cong \Pic(X) \oplus \mathbb{Z}[\delta]
\end{equation}
where~$\delta$ is a divisor on~$\hilbtwo$ such that~$\mathcal{O}_{\hilbtwo}(2\delta)=\mathcal{O}_{\hilbtwo}(D)$. Using this, we can show the following lemma, which we will use throughout.

\begin{lemma}
  \label{lemma:HodgeNumberHilb}
  If a smooth projective variety~$X$ has an exceptional structure sheaf, then so does the Hilbert square~$\hilbtwo$.
\end{lemma}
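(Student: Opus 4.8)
The plan is to transport the vanishing of the higher cohomology of the structure sheaf along the two morphisms in the blowup diagram~\eqref{equation:blowup-diagram}: first from~$X$ up to~$X \times X$ by Künneth, then up to~$\Bl_\Delta(X \times X)$ via the blowup morphism~$\pi$, and finally down to~$\hilbtwo$ via the quotient morphism~$q$. Since ``exceptional structure sheaf'' means precisely that~$\HH^i(X,\mathcal{O}_X)=0$ for all~$i\geq 1$, each step amounts to checking that this vanishing is preserved.

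First I would observe that by the Künneth formula~$\HH^i(X \times X, \mathcal{O}_{X \times X}) = \bigoplus_{a+b=i} \HH^a(X,\mathcal{O}_X)\otimes_{\mathbb{C}} \HH^b(X,\mathcal{O}_X)$, which vanishes for~$i\geq 1$ exactly because~$\mathcal{O}_X$ is exceptional; hence~$\mathcal{O}_{X\times X}$ is exceptional too. Next I would pass to the blowup~$\pi\colon \Bl_\Delta(X\times X)\to X\times X$ along the smooth diagonal. As blowing up a smooth subvariety of a smooth variety satisfies~$\RRR\pi_*\mathcal{O}_{\Bl_\Delta(X\times X)}\cong\mathcal{O}_{X\times X}$ (the fibres being projective spaces, with no higher cohomology of the structure sheaf), the projection formula and the Leray spectral sequence give~$\HH^i(\Bl_\Delta(X\times X),\mathcal{O})\cong\HH^i(X\times X,\mathcal{O}_{X\times X})$ for all~$i$, so the blowup again has an exceptional structure sheaf.

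Finally I would descend along the double cover~$q\colon \Bl_\Delta(X\times X)\to\hilbtwo$, which realises~$\hilbtwo$ as the quotient by the natural involution~$\sigma$. Working over a field of characteristic zero, the finite morphism~$q$ yields a~$\sigma$\dash equivariant decomposition~$q_*\mathcal{O}_{\Bl_\Delta(X\times X)}=\mathcal{O}_{\hilbtwo}\oplus\mathcal{L}$ into invariant and anti\dash invariant parts; since~$q$ is finite, hence affine, taking cohomology gives~$\HH^i(\hilbtwo,\mathcal{O}_{\hilbtwo})=\HH^i(\Bl_\Delta(X\times X),\mathcal{O})^{\sigma}$. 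The right\dash hand side vanishes for~$i\geq 1$ by the previous step, which proves the claim.

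The three ingredients are individually standard, so the point that requires the most care is the last step: the involution~$\sigma$ is \emph{not} free, as it fixes the exceptional divisor~$E$, so one cannot argue by étale descent. The essential input is therefore the characteristic\dash zero hypothesis, which supplies a Reynolds operator making the functor of~$\sigma$\dash invariants exact, so that invariants commute with cohomology. I expect no genuine obstacle beyond keeping track of this finite\dash group\dash quotient subtlety.
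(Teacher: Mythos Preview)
Your proof is correct and follows essentially the same route as the paper: K\"unneth for~$X\times X$, then~$\RRR\pi_*\mathcal{O}\cong\mathcal{O}$ for the blowup, then the splitting of~$q_*\mathcal{O}_{\Bl_\Delta(X\times X)}$ to descend to~$\hilbtwo$. The only cosmetic difference is that the paper phrases the last step as~$\HH^i(\hilbtwo,\mathcal{O}_{\hilbtwo})$ being a direct summand of~$\HH^i(\Bl_\Delta(X\times X),\mathcal{O})$ via the explicit decomposition~$q_*\mathcal{O}\cong\mathcal{O}_{\hilbtwo}\oplus\mathcal{O}_{\hilbtwo}(-\delta)$, rather than invoking the Reynolds operator, but this is the same argument.
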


\begin{proof}
  The K\"unneth formula shows that~$X\times X$ has an exceptional structure sheaf, hence so does the blowup~$\Bl_\Delta(X\times X)$ by the birational invariance of the cohomology of the structure sheaf. Since
  \begin{align}
    \HH^i(\Bl_\Delta(X \times X), \mathcal{O}_{\Bl_\Delta(X \times X)})
    &\cong \HH^i(\hilbtwo, q_*\mathcal{O}_{\Bl_\Delta(X \times X)})\\
    &\cong \HH^i(\hilbtwo, \mathcal{O}_{\hilbtwo})\oplus \HH^i(\hilbtwo, \mathcal{O}_{\hilbtwo}(-\delta)),
  \end{align}
  we see that~$\HH^i(\hilbtwo,\mathcal{O}_{\hilbtwo})$ is a direct summand of~$\HH^i(\Bl_\Delta(X \times X),\mathcal{O}_{\Bl_\Delta(X \times X)})$, and hence vanishes for any~$i\geq 1$.
\end{proof}

\begin{remark}
  More generally the description of~$\hilbtwo$ as the blowup of the quotient allows us to readily describe the Hodge polynomial\footnote{For surfaces and arbitrary~$n\geq2$, the Hodge polynomial is described by G\"ottsche--Soergel via a generating series involving all values of~$n$ \cite{MR1219901}.}. Recall that for a smooth projective variety~$Y$ its polynomial is defined as
  \begin{equation}
    \hodgepoly(Y)(x,y)\coloneqq\sum_{p,q=0}^{\dim Y}(-1)^{p+q}\hh^{p,q}(Y)x^py^q,
  \end{equation}
  where~$\hh^{p,q}(Y)\coloneqq\dim_k\HH^q(Y,\Omega_Y^p)$. One can then show (e.g.~as in \cite[lemma~2.6]{MR2506383}) that the Hodge polynomial of~$\hilbtwo$ is
  \begin{equation}
    \hodgepoly(\hilbtwo)(x,y)=\frac{1}{2}\left( \hodgepoly(X)(x,y)^2 + \hodgepoly(X)(x^2,y^2) \right)+\sum_{i=0}^{d-2}\hodgepoly(X)(x,y)x^{i+1}y^{i+1}.
  \end{equation}
  This also proves \cref{lemma:HodgeNumberHilb}, and is useful when considering the semiorthogonal decomposition of \cref{remark:krug-ploog-sosna} from the point of view of additive invariants.
\end{remark}

Finally, because the Hilbert scheme is a fine moduli space, there is a \emph{universal closed subscheme}
\begin{equation}
  \begin{tikzcd}
    Z\coloneqq\{(x,\xi) \mid x \in \Supp(\xi)\} \arrow[r, hook, "i"] &  X \times \hilbtwo
  \end{tikzcd}
\end{equation}
which is smooth, and there is an isomorphism
\begin{equation}
  Z \cong \Bl_\Delta(X \times X).
\end{equation}
There is a corresponding universal short exact sequence on~$X\times\hilbtwo$
\begin{equation}
  \label{equation:universal}
  0 \to \mathcal{I} \to \mathcal{O}_{X \times \hilbtwo} \to \mathcal{O}_Z \to 0,
\end{equation}
with~$\mathcal{I}$ the \emph{universal ideal sheaf}.


\subsection{Hochschild cohomology}
\label{subsection:hochschild-cohomology}
We will also use Hochschild cohomology as an invariant of the derived category, let us briefly recall its definition and main properties. For more information one is referred to \cite{0904.4330v1} or \cite[\S5.2]{MR2244106}.

\begin{definition}
  Let~$X$ be a smooth projective variety. Then its \emph{Hochschild cohomology} is defined as the self-extensions of the identity functor, i.e.
  \begin{equation}
    \HHHH^n(X)\coloneqq\Ext_{X\times X}^n(\Delta_*\mathcal{O}_X,\Delta_*\mathcal{O}_X)
  \end{equation}
\end{definition}

The Hochschild--Kostant--Rosenberg decomposition then expresses Hochschild cohomology in terms of sheaf cohomology of polyvector fields. For a proof in this context we refer to \cite[theorem~4.1]{MR2472137}.

\begin{theorem}[Hochschild--Kostant--Rosenberg]
  Let~$X$ be a smooth projective variety. Then there exists an isomorphism
  \begin{equation}
    \HHHH^n(X)\cong\bigoplus_{p+q=n}\HH^p\left( X,\bigwedge^q\tangent_X \right).
  \end{equation}
\end{theorem}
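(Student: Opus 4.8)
The plan is to transport the self-$\Ext$ computation from $X \times X$ back to $X$ by adjunction, and then to invoke a derived-categorical form of the Hochschild--Kostant--Rosenberg isomorphism for the derived self-intersection of the diagonal. Write $\Delta\colon X \hookrightarrow X \times X$ for the diagonal; it is a regular closed immersion of codimension $d = \dim X$ whose normal bundle $\normal_\Delta$ is canonically identified with $\tangent_X$, so that the conormal bundle is $\normal_\Delta^\vee \cong \Omega_X$. As $\Delta$ is a closed immersion the pushforward $\Delta_*$ is exact, and the adjunction $\LLL\Delta^* \dashv \Delta_*$ yields a natural isomorphism
\begin{equation}
  \RRR\Hom_{X \times X}(\Delta_*\mathcal{O}_X, \Delta_*\mathcal{O}_X) \cong \RRR\Hom_X(\LLL\Delta^*\Delta_*\mathcal{O}_X, \mathcal{O}_X).
\end{equation}
Taking cohomology in degree $n$ identifies $\HHHH^n(X)$ with the right-hand side, so the whole problem reduces to understanding the object $\LLL\Delta^*\Delta_*\mathcal{O}_X \in \derived^\bounded(X)$.

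The first step is the \emph{local} analysis of this object. Working in a neighbourhood where $\Delta$ is cut out by a regular sequence, one resolves $\Delta_*\mathcal{O}_X$ by the Koszul complex on the ideal sheaf of the diagonal and applies $\Delta^*$; the resulting complex has vanishing differentials, and one computes its cohomology sheaves to be
\begin{equation}
  \mathcal{H}^{-q}(\LLL\Delta^*\Delta_*\mathcal{O}_X) \cong \bigwedge^q \normal_\Delta^\vee \cong \Omega_X^q.
\end{equation}
Equivalently, $\sheafExt^q_{X \times X}(\Delta_*\mathcal{O}_X, \Delta_*\mathcal{O}_X) \cong \Delta_*\bigwedge^q \tangent_X$, as holds for any regular embedding. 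Since $\Delta_*$ is exact this gives the local-to-global $\Ext$ spectral sequence the second page $\mathrm{E}_2^{p,q} = \HH^p(X, \bigwedge^q\tangent_X)$, abutting to $\HHHH^{p+q}(X)$.

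The main obstacle is to pass from these cohomology sheaves to a genuine direct-sum decomposition, that is, to establish that
\begin{equation}
  \LLL\Delta^*\Delta_*\mathcal{O}_X \cong \bigoplus_{q \geq 0} \Omega_X^q[q]
\end{equation}
as objects of $\derived^\bounded(X)$; equivalently, that the spectral sequence degenerates and the canonical (truncation) filtration with graded pieces $\Omega_X^q[q]$ splits. A priori the graded pieces could be assembled by nonzero higher differentials or nonsplit extensions, and the content of the theorem is that in characteristic zero the antisymmetrisation map furnishes a global splitting of this filtration, inducing the isomorphisms on cohomology found above (this is precisely where smoothness and the characteristic-zero hypothesis enter, and is the crux; see \cite[theorem~4.1]{MR2472137}). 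Granting this, each $\Omega_X^q$ is locally free, so dualising yields $\RRR\Hom_X(\Omega_X^q, \mathcal{O}_X) \cong \RRR\Gamma(X, \bigwedge^q\tangent_X)$ and hence
\begin{equation}
  \RRR\Hom_X(\LLL\Delta^*\Delta_*\mathcal{O}_X, \mathcal{O}_X) \cong \bigoplus_{q \geq 0} \RRR\Gamma\left(X, \bigwedge^q \tangent_X\right)[-q].
\end{equation}
Extracting cohomology in degree $n$ produces the claimed decomposition $\HHHH^n(X) \cong \bigoplus_{p+q=n}\HH^p(X, \bigwedge^q \tangent_X)$.
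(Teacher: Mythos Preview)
Your sketch is correct and is in fact the standard route to the decomposition; note however that the paper does not supply its own proof of this statement at all, but simply cites \cite[theorem~4.1]{MR2472137}, which is exactly the reference you invoke for the crucial splitting of $\LLL\Delta^*\Delta_*\mathcal{O}_X$. So there is nothing to compare: you have expanded the citation into the argument it points to.
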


The second Hochschild cohomology group of~$X$ describes the deformation theory of the abelian category~$\coh X$ as explained in \cite{MR2238922}. Under the Hochschild--Kostant--Rosenberg decomposition we can give an interpretation to the three components which arise in the decomposition, as explained in \cite{MR2477894}. Namely we have that
\begin{equation}
  \label{equation:hkr-hh-2}
  \HHHH^2(X)\cong\underset{\text{gerby}}{\HH^2(X,\mathcal{O}_X)}\oplus\underset{\text{commutative}}{\HH^1(X,\tangent_X)}\oplus\underset{\text{noncommutative}}{\HH^0\left( X,\bigwedge^2\tangent_X \right)},
\end{equation}
Then
\begin{enumerate}
  \item $\HH^2(X,\mathcal{O}_X)$ describes the gerby deformations;
  \item $\HH^1(X,\tangent_X)$ is part of the usual Kodaira--Spencer deformation-obstruction calculus, describing infinitesimal deformations of~$X$ as a variety;
  \item $\HH^0\left( X,\bigwedge^2\tangent_X \right)$ are (pre-)Poisson structures, and describe noncommutative infinitesimal deformations of the structure sheaf~$\mathcal{O}_X$ as a sheaf of algebras.
\end{enumerate}
In this article we will ignore the component~$\HH^2(X,\mathcal{O}_X)$ as it is zero for all varieties considered here, by \cref{lemma:HodgeNumberHilb} and the assumption that~$\mathcal{O}_X$ is exceptional.

\subsection{Summary of notation}
We will fix the following notation. Because we have that~$\hilbtwo\cong\Bl_\Delta(X\times X)/(\mathbb{Z}/2\mathbb{Z})$, we can consider the blowup diagram together with the quotient morphism. Moreover the universal closed subscheme can be used to extend this diagram as follows.
\begin{equation}
  \label{equation:notation}
  \begin{tikzcd}
    & & \hilbtwo \\
    E\cong\mathbb{P}(\tangent_X) \arrow[dr, phantom, "\square"]  \arrow[r, hook, "j"] \arrow[d, "p"] & \Bl_\Delta(X \times X)\cong Z \arrow[r, hook, "i"] \arrow[ru, "q"] \arrow[d, "\pi"] & X\times\hilbtwo \arrow[d, two heads, "p_X"] \arrow[u, two heads, swap, "p_{\hilbtwo}"] \\
    X \arrow[r, hook, "\Delta"] & X \times X \arrow[r, "p_1"] & X
  \end{tikzcd}
\end{equation}
Here~$E$ is the exceptional divisor, which is the projective bundle over~$X$ given by the normal bundle of the diagonal embedding, i.e.~the tangent bundle. The morphism~$p_1$ is the projection on the first factor. We will denote the different projections in products involving~$X$ and~$\hilbtwo$ as
\begin{equation}
  \label{equation:projection-diagram-notation}
  \begin{tikzcd}
    & X\times\hilbtwo \arrow[ld, swap, "p_X"] \arrow[rd, "p_{\hilbtwo}"] \\
    X & & \hilbtwo
  \end{tikzcd}
\end{equation}
and
\begin{equation}
  \label{equation:projection-diagram-triple-notation}
  \begin{tikzcd}
    & X\times\hilbtwo\times X \arrow[ld, swap, "p_{1,2}"] \arrow[d, "p_{1,3}"] \arrow[rd, "p_{2,3}"] \\
    X\times\hilbtwo & X\times X & \hilbtwo\times X
  \end{tikzcd}.
\end{equation}

\paragraph{Conventions}
Throughout,~$k$ will denote an algebraically closed field of characteristic~$0$, and~$X$ denotes a smooth projective variety of dimension~$d \geq 2$ with an exceptional structure sheaf, i.e.~$\HH^i(X,\mathcal{O}_X)=0$ for all~$i\geq 1$, unless explicitly mentioned otherwise.

\section{Fully faithfulness}
\label{section:fully-faithful}
In this section we prove \cref{theorem:fully-faithful}. The global structure of the proof is analogous to the proof of \cite[theorem~1.2]{MR3397451}, but the actual steps are completely different, as there is no derived McKay correspondence that can be used to compute the different Fourier--Mukai kernels.

We will denote the Fourier--Mukai functors~$\derived^\bounded(X) \to \derived^\bounded(\hilbtwo)$ associated to the sheaves in \eqref{equation:universal}
\begin{equation}
	\begin{aligned}
		F&\coloneqq\Phi_{\mathcal{I}}, \\
		F'&\coloneqq\Phi_{\mathcal{O}_{X \times \hilbtwo}}, \\
		F''&\coloneqq\Phi_{\mathcal{O}_Z},
	\end{aligned}
\end{equation}
and similarly for their respective right adjoints:
\begin{equation}
\label{equation:right-adjoints}
	\begin{aligned}
		R&\coloneqq\Phi_{\mathcal{I}^\vee \otimes p_X^*\omega_X[d]} \\
		R'&\coloneqq\Phi_{p_X^*\omega_X[d]} \\
		R''&\coloneqq\Phi_{\mathcal{O}_Z^\vee \otimes p_X^*\omega_X[d]}
	\end{aligned}
\end{equation}
where~$(-)^\vee\coloneqq\RsheafHom(-, \mathcal{O}_{X\times \hilbtwo})$ is the derived dual. This notation for the functors mimicks that of \cite{MR3397451}.

To show fully faithfulness, it suffices to show that the unit
\begin{equation}
\identity_{\derived^\bounded(X)} \to R\circ F
\end{equation}
of the adjunction is an equivalence. The composition of Fourier--Mukai functors is again a Fourier--Mukai functor, whose kernel is given by the convolution product of the respective kernels. In this case the kernel is~$(\mathcal{I}^\vee\otimes p_X^*\omega_X[d])*\mathcal{I}$.

Hence, by applying~$-\ast\mathcal{I}$ to the distinguished triangle
\begin{equation}
  \mathcal{O}_Z^\vee \otimes p_X^*\omega_X[d] \to p_X^*\omega_X[d] \to \mathcal{I}^\vee \otimes p_X^*\omega_X[d] \to
\end{equation}
coming from the right adjoints \eqref{equation:right-adjoints} and the universal short exact sequence \eqref{equation:universal}, it suffices to show that
\begin{equation}
  \label{equation:comp-adjoints}
	\begin{aligned}
		\left( \mathcal{O}_Z^\vee \otimes p_X^*\omega_X[d] \right) \ast\mathcal{I} &\cong \mathcal{O}_{\Delta}[-1],\\
		\left( p_X^*\omega_X[d] \right) \ast \mathcal{I} &\cong 0,
  \end{aligned}
\end{equation}
since the left-hand sides of \eqref{equation:comp-adjoints} are exactly the Fourier--Mukai kernels of~$R''\circ F$ and~$R'\circ F$. This is done in \cref{proposition:iso-1,proposition:iso-2}. The hardest kernel to compute is the one for~$R''\circ F$, and we do this by first computing the kernels of~$R''\circ F'$ and of~$R''\circ F''$ in \cref{corollary:R''F',corollary:R''F''}.


The following lemma computes the convolution of a Fourier--Mukai kernel on~$X\times\hilbtwo$ with the Fourier--Mukai kernel for~$R''$.
\begin{lemma}
  For every object $\mathcal{E}\in \derived^{\bounded}(X\times \hilbtwo)$, there is an isomorphism
  \begin{equation}
    \label{equation:convolution}
    \left( (i_*\mathcal{O}_Z)^\vee\otimes p_X^*\omega_X[d] \right) \ast \mathcal{E} \cong \RRR({\identity_X}\times (p_1\circ \pi))_*\left( ({\identity_X}\times q)^*\mathcal{E}\otimes p_Z^*\mathcal{O}_Z(E) \right).
  \end{equation}
\end{lemma}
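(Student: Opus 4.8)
The plan is to expand the convolution over the triple product $X\times\hilbtwo\times X$ of \eqref{equation:projection-diagram-triple-notation} and to exploit that the kernel $\mathcal{K}\coloneqq(i_*\mathcal{O}_Z)^\vee\otimes p_X^*\omega_X[d]$, placed in the $\hilbtwo\times X$ factors via the swap, is supported on the smooth subvariety $Z$. By definition
\begin{equation}
  \mathcal{K}\ast\mathcal{E}\cong\RRR p_{1,3\,*}\left(p_{1,2}^*\mathcal{E}\otimes^{\LLL}p_{2,3}^*\mathcal{K}\right),
\end{equation}
so the whole computation amounts to simplifying the right-hand side.

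First I would compute $\mathcal{K}$ itself. Since $i\colon Z\hookrightarrow X\times\hilbtwo$ is a regular closed immersion of codimension $d$ (as $\dim Z=2d$ while $\dim(X\times\hilbtwo)=3d$), Grothendieck--Verdier duality for a regular embedding gives $(i_*\mathcal{O}_Z)^\vee\cong i_*\bigl(\bigwedge^d\mathcal{N}_{Z/X\times\hilbtwo}\bigr)[-d]$. Substituting this and absorbing the factor $p_X^*\omega_X[d]$ into the pushforward by the projection formula, the two shifts cancel and one obtains $\mathcal{K}\cong i_*\mathcal{L}$ for the line bundle
\begin{equation}
  \mathcal{L}\coloneqq\bigwedge^d\mathcal{N}_{Z/X\times\hilbtwo}\otimes i^*p_X^*\omega_X
\end{equation}
on $Z$.

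Next I would use base change and the projection formula. Pulling back $i\colon Z\hookrightarrow\hilbtwo\times X$ along the flat projection $p_{2,3}$ produces a Cartesian square with apex $X\times Z$, so flat base change yields $p_{2,3}^*\mathcal{K}\cong\tilde\iota_*\,p_Z^*\mathcal{L}$, where $\tilde\iota\colon X\times Z\hookrightarrow X\times\hilbtwo\times X$ is the induced embedding and $p_Z\colon X\times Z\to Z$ the projection. The projection formula then rewrites the integrand as $\tilde\iota_*\bigl(\tilde\iota^*p_{1,2}^*\mathcal{E}\otimes p_Z^*\mathcal{L}\bigr)$. It remains only to read off the compositions $p_{1,2}\circ\tilde\iota=\identity_X\times q$ and $p_{1,3}\circ\tilde\iota=\identity_X\times(p_1\circ\pi)$, which give $\tilde\iota^*p_{1,2}^*\mathcal{E}=(\identity_X\times q)^*\mathcal{E}$ and $\RRR p_{1,3\,*}\,\tilde\iota_*=\RRR(\identity_X\times(p_1\circ\pi))_*$. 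Assembling these produces exactly the right-hand side of \eqref{equation:convolution}, provided $\mathcal{L}\cong\mathcal{O}_Z(E)$.

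The main obstacle, and the only genuinely geometric input, is therefore the identification $\mathcal{L}\cong\mathcal{O}_Z(E)$. Here I would apply adjunction, using $\bigwedge^d\mathcal{N}_{Z/X\times\hilbtwo}\cong\omega_Z\otimes i^*\omega_{X\times\hilbtwo}^\vee$ together with $\omega_{X\times\hilbtwo}\cong p_X^*\omega_X\otimes p_{\hilbtwo}^*\omega_{\hilbtwo}$ and $p_{\hilbtwo}\circ i=q$. The two copies of $i^*p_X^*\omega_X$ cancel, leaving $\mathcal{L}\cong\omega_Z\otimes q^*\omega_{\hilbtwo}^\vee$, the relative dualising sheaf of $q$. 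Since $q\colon Z=\Bl_\Delta(X\times X)\to\hilbtwo$ is the double cover given by the $\mathbb{Z}/2$-action, ramified precisely along the exceptional divisor $E$, Riemann--Hurwitz gives $\omega_Z\cong q^*\omega_{\hilbtwo}\otimes\mathcal{O}_Z(E)$ and hence $\mathcal{L}\cong\mathcal{O}_Z(E)$. This cancellation is exactly what the twist $p_X^*\omega_X[d]$ in the kernel of $R''$ is designed to produce.
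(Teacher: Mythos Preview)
Your proof is correct and follows essentially the same route as the paper: compute $(i_*\mathcal{O}_Z)^\vee$ via Grothendieck duality, apply flat base change along $p_{2,3}$ and the projection formula to reduce to $X\times Z$, and identify the two compositions with $\identity_X\times q$ and $\identity_X\times(p_1\circ\pi)$. The only cosmetic difference is in the final identification $\omega_Z\otimes q^*\omega_{\hilbtwo}^\vee\cong\mathcal{O}_Z(E)$: the paper takes determinants of the tangent sequence $0\to\tangent_Z\to q^*\tangent_{\hilbtwo}\to\mathcal{O}_E(2E)\to 0$, whereas you invoke Riemann--Hurwitz for the double cover $q$ ramified along $E$, which is an equally valid (and arguably cleaner) way to compute the same relative canonical bundle.
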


\begin{proof}
  Using Grothendieck duality we compute the derived dual as
  \begin{equation}
    \label{equation:derived-dual-OZ}
    \begin{aligned}
      (i_*\mathcal{O}_Z)^\vee&\cong\RsheafHom\left(i_*\mathcal{O}_Z, \mathcal{O}_{X\times \hilbtwo}\right) \\
      &\cong i_*\circ i^!(\mathcal{O}_{X\times \hilbtwo}) \\
      &\cong i_*(\omega_i[\dim i]) \\
      &\cong i_*\omega_i[-d],
    \end{aligned}
  \end{equation}
  where~$\omega_i\coloneqq\omega_Z\otimes i^*\omega_{X\times \hilbtwo}^{-1}$ is the relative canonical bundle of $i$ and
  \begin{equation}
    \dim i=\dim Z-\dim (X\times \hilbtwo)=-d
  \end{equation}
  is its relative dimension. A straightforward computation using functoriality and the projection formula gives that for any object $\mathcal{E}\in \derived^{\bounded}(X\times \hilbtwo)$,
  \begin{equation}
    \begin{aligned}
      &\left( (i_*\mathcal{O}_Z)^\vee\otimes p_X^*\omega_X[d] \right)\ast \mathcal{E} \\
      &\qquad\coloneqq \RRR p_{1,3,*}\left(p_{1,2}^*\mathcal{E} \otimes^\LLL p_{2,3}^*\left( (i_*\mathcal{O}_Z)^\vee\otimes p_X^*\omega_X\right) \right)[d]\\
      &\qquad\cong\RRR p_{1,3,*}\left(p_{1,2}^*\mathcal{E} \otimes^\LLL p_{2,3}^*(i_*\mathcal{O}_Z)^\vee\otimes p_{3}^*\omega_X \right)[d]\\
      &\qquad\cong\RRR p_{1,3,*}\left(p_{1,2}^*\mathcal{E} \otimes^\LLL p_{2,3}^*(i_*\mathcal{O}_Z)^\vee\right)\otimes p_2^*\omega_X [d]\\
      &\qquad\cong\RRR p_{1,3,*}\left(p_{1,2}^*\mathcal{E} \otimes^\LLL p_{2,3}^*(i_*\omega_i)\right)\otimes p_2^*\omega_X.
    \end{aligned}
  \end{equation}
  With the help of the following diagram where the left square is cartesian
  \begin{equation}
    \label{equation:commutative-diagram-triple-projections}
    \begin{tikzcd}
      X\times \Bl_\Delta(X \times X) \arrow[dr, phantom, "\square"] \arrow[r, hook, "{\identity_X}\times i"] \arrow[d, "p_Z"] & X\times \hilbtwo\times X \arrow[d, "p_{2,3}"] \arrow[r, "p_{1,3}"] \arrow[dr, "p_{1,2}"] & X\times X \\
      Z\cong\Bl_\Delta(X \times X) \arrow[r, hook, "i"] & \hilbtwo\times X & X\times \hilbtwo
    \end{tikzcd}
  \end{equation}
  we can continue the computation using base change and the projection formula
  \begin{equation}
    \begin{aligned}
      &\qquad\cong\RRR p_{1,3,*}\left( p_{1,2}^*\mathcal{E} \otimes^\LLL ({\identity_X}\times i)_*\circ p_Z^*(\omega_i) \right)\otimes p_2^*\omega_X \\
      &\qquad\cong\RRR p_{1,3,*}({\identity_X}\times i)_*\left(({\identity_X}\times i)^*\circ p_{1,2}^*(\mathcal{E})\otimes p_Z^*\omega_i\right)\otimes p_2^*\omega_X \\
      &\qquad\cong\RRR({\identity_X}\times (p_1\circ \pi))_*\left(({\identity_X}\times q)^*\mathcal{E}\otimes p_Z^*\omega_i\right)\otimes p_2^*\omega_X.
    \end{aligned}
  \end{equation}
  Note that the last equality follows because
  \begin{equation}
    \begin{aligned}
      p_{1,3} \circ ({\identity_X} \times i)&={\identity_X} \times (p_1 \circ \pi), \\
      p_{1,2} \circ ({\identity_X} \times i)&={\identity_X} \times q,
    \end{aligned}
  \end{equation}
  by combining the commutative diagrams \eqref{equation:notation} and \eqref{equation:commutative-diagram-triple-projections}.

  Now the relative canonical bundle can be expressed as follows
  \begin{equation}
    \omega_i\coloneqq\omega_Z\otimes i^*\omega_{X\times \hilbtwo}^{-1}\cong\omega_Z\otimes q^*\omega_{\hilbtwo}^{-1}\otimes \pi^*\circ p_1^*(\omega_X^{-1})\cong\mathcal{O}_Z(E) \otimes \pi^*\circ p_1^*(\omega_X^{-1}),
  \end{equation}
  where we used that~$\omega_Z\otimes q^*\omega_{\hilbtwo}^{-1}$ can be computed by taking determinants of the exact sequence
  \begin{equation}
    0 \to \tangent_Z \to q^*\tangent_{\hilbtwo} \to \mathcal{O}_E(2E) \to 0,
  \end{equation}
  and using that~$\det(\mathcal{O}_E(2E))\cong\mathcal{O}_Z(2E) \otimes \mathcal{O}_Z(-E)\cong\mathcal{O}_Z(E)$. Hence
  \begin{equation}
    \label{equation:R''*}
    \left( (i_*\mathcal{O}_Z)^\vee\otimes p_X^*\omega_X[d] \right) \ast \mathcal{E}\cong \RRR({\identity_X}\times (p_1\circ \pi))_*\left( ({\identity_X}\times q)^*\mathcal{E}\otimes p_Z^*\mathcal{O}_Z(E) \right),
  \end{equation}
  where we used the cartesian diagram
  \begin{equation}
    \label{equation:cartesian}
    \begin{tikzcd}[row sep=large, column sep=large]
      X\times \Bl_\Delta(X\times X) \arrow[d, "p_Z"] \ar[r, "{\identity_X}\times (p_1\circ \pi)"] \arrow[dr, phantom, "\square"] &X\times X \arrow[d, "p_2"] \\
      Z\cong\Bl_\Delta(X \times X) \arrow[r, "p_1\circ \pi"] & X
    \end{tikzcd}
  \end{equation}
  and the projection formula.
\end{proof}

We will apply this with~$\mathcal{E}\coloneqq\mathcal{O}_{X\times\hilbtwo}$ (resp.~$i_*\mathcal{O}_Z)$) to compute~$R''\circ F'$ (resp.~$R''\circ F''$) in the following two corollaries.

\begin{corollary}
  \label{corollary:R''F'}
  The Fourier--Mukai kernel of~$R''\circ F'$ is~$\mathcal{O}_{X \times X}$.
\end{corollary}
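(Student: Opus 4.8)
The plan is to apply the preceding lemma with $\mathcal{E}\coloneqq\mathcal{O}_{X\times\hilbtwo}$, which is precisely the Fourier--Mukai kernel of $F'$, so that the left-hand side of \eqref{equation:convolution} becomes exactly the kernel of $R''\circ F'$. Since $({\identity_X}\times q)^*\mathcal{O}_{X\times\hilbtwo}\cong\mathcal{O}_{X\times\Bl_\Delta(X\times X)}$, the lemma immediately identifies this kernel with
\[
\RRR({\identity_X}\times(p_1\circ\pi))_*\,p_Z^*\mathcal{O}_Z(E),
\]
so it only remains to evaluate this pushforward and check it equals $\mathcal{O}_{X\times X}$.

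First I would invoke the cartesian square \eqref{equation:cartesian}, whose right-hand vertical map $p_2$ is flat, and apply flat base change. As $p_Z^*\mathcal{O}_Z(E)$ is pulled back along $p_Z$ from $Z\cong\Bl_\Delta(X\times X)$, this yields
\[
\RRR({\identity_X}\times(p_1\circ\pi))_*\,p_Z^*\mathcal{O}_Z(E)\cong p_2^*\,\RRR(p_1\circ\pi)_*\mathcal{O}_Z(E),
\]
reducing the problem to computing $\RRR(p_1\circ\pi)_*\mathcal{O}_Z(E)$ as an object on $X$.

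I would then split this along the factorisation $p_1\circ\pi$ and compute $\RRR\pi_*$ first. Since $\pi\colon\Bl_\Delta(X\times X)\to X\times X$ is the blowup of the smooth diagonal of codimension $d\geq2$, the exceptional divisor is $E\cong\mathbb{P}(\tangent_X)$ with $\mathcal{O}_E(E)\cong\mathcal{O}_{\mathbb{P}(\tangent_X)}(-1)$ and $\pi|_E=\Delta\circ p$. From the short exact sequence $0\to\mathcal{O}_{\Bl_\Delta(X\times X)}\to\mathcal{O}_{\Bl_\Delta(X\times X)}(E)\to\mathcal{O}_E(E)\to0$, the vanishing $\RRR p_*\mathcal{O}_{\mathbb{P}(\tangent_X)}(-1)=0$ (the cohomology of $\mathcal{O}(-1)$ on the fibres $\mathbb{P}^{d-1}$ vanishes in all degrees once $d\geq2$) gives $\RRR\pi_*\mathcal{O}_E(E)=0$, and combined with $\RRR\pi_*\mathcal{O}_{\Bl_\Delta(X\times X)}\cong\mathcal{O}_{X\times X}$ this forces $\RRR\pi_*\mathcal{O}_Z(E)\cong\mathcal{O}_{X\times X}$.

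Finally, pushing forward along $p_1\colon X\times X\to X$ and using that $\mathcal{O}_X$ is exceptional, I get $\RRR p_{1,*}\mathcal{O}_{X\times X}\cong\mathcal{O}_X\otimes\RRR\Gamma(X,\mathcal{O}_X)\cong\mathcal{O}_X$, hence $\RRR(p_1\circ\pi)_*\mathcal{O}_Z(E)\cong\mathcal{O}_X$; applying $p_2^*$ then produces $\mathcal{O}_{X\times X}$, as claimed. I do not expect a genuine obstacle here: the only nonformal input is the vanishing $\RRR\pi_*\mathcal{O}_Z(E)\cong\mathcal{O}_{X\times X}$, which is a standard feature of blowups along smooth centres of codimension $\geq2$, and the hypothesis $d\geq2$ is exactly what makes the relevant fibrewise cohomology vanish.
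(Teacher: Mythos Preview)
Your proof is correct and follows essentially the same route as the paper: both specialise the lemma to $\mathcal{E}=\mathcal{O}_{X\times\hilbtwo}$, use flat base change along the cartesian square \eqref{equation:cartesian} to reduce to computing $\RRR(p_1\circ\pi)_*\mathcal{O}_Z(E)$, and then evaluate $\RRR\pi_*\mathcal{O}_Z(E)\cong\mathcal{O}_{X\times X}$ via the short exact sequence $0\to\mathcal{O}_Z\to\mathcal{O}_Z(E)\to\mathcal{O}_E(E)\to0$ and the vanishing $\RRR p_*\mathcal{O}_p(-1)=0$ on the $\mathbb{P}^{d-1}$-bundle. If anything, you are slightly more explicit than the paper in spelling out the final $\RRR p_{1,*}$ step using the exceptionality of $\mathcal{O}_X$.
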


\begin{proof}
  Taking~$\mathcal{E}\coloneqq\mathcal{O}_{X\times \hilbtwo}$ in \eqref{equation:convolution}, the kernel of $R''\circ F'$ is
  \begin{equation}
    \label{equation:R''F'}
    \begin{aligned}
      \left( (i_*\mathcal{O}_Z)^\vee\otimes p_X^*\omega_X[d] \right) \ast \mathcal{O}_{X\times \hilbtwo} &\cong \RRR({\identity_X}\times (p_1\circ \pi))_*\left(p_Z^*\mathcal{O}_Z(E)\right)\\
      &\cong p_2^*\left(\RRR(p_1\circ \pi)_*\mathcal{O}_Z(E)\right)\\
      &\cong \mathcal{O}_{X\times X},
    \end{aligned}
  \end{equation}
  where the second isomorphism uses base change for the cartesian diagram \eqref{equation:cartesian}. The last isomorphism follows by applying~$\RRR\pi_*$ to
  \begin{equation}
    0 \to \mathcal{O}_Z \to \mathcal{O}_Z(E) \to \mathcal{O}_E(E) \to 0,
  \end{equation}
  and using that~$\RRR\pi_*\mathcal{O}_Z \cong \mathcal{O}_{X \times X}$, together with~$\RRR\pi_*\mathcal{O}_E(E)\cong\Delta_{*}\circ\RRR p_*\mathcal{O}_p(-1)=0$ as $p$ is a $\mathbb{P}^{d-1}$-bundle.
\end{proof}

\begin{corollary}
  \label{corollary:R''F''}
  The Fourier--Mukai kernel of~$R''\circ F''$ is~$\mathcal{O}_{\Delta_X} \oplus \mathcal{O}_{X \times X}$.
\end{corollary}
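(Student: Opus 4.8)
The plan is to apply the formula \eqref{equation:convolution} of the preceding lemma with~$\mathcal{E}\coloneqq i_*\mathcal{O}_Z$, exactly as announced, so that the kernel of~$R''\circ F''$ is identified with
\begin{equation}
  \RRR({\identity_X}\times(p_1\circ\pi))_*\bigl( ({\identity_X}\times q)^*(i_*\mathcal{O}_Z)\otimes p_Z^*\mathcal{O}_Z(E)\bigr).
\end{equation}
The first task is to compute the pulled-back sheaf~$({\identity_X}\times q)^*(i_*\mathcal{O}_Z)$. Since~$q$ is the quotient by the involution~$\sigma$ exchanging the two points, it is finite and flat of degree~$2$ (both~$Z$ and~$\hilbtwo$ being smooth of dimension~$2d$), and so is~${\identity_X}\times q$. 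I would therefore form the cartesian square obtained by pulling~$i$ back along~${\identity_X}\times q$ and invoke flat base change to get~$({\identity_X}\times q)^*(i_*\mathcal{O}_Z)\cong\tilde{i}_*\mathcal{O}_W$, where~$W$ is the scheme-theoretic fibre product~$Z\times_{\hilbtwo}Z$, closed in~$X\times Z$ via~$\tilde{i}$; flatness guarantees there are no higher Tor contributions.

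Next I would analyse the geometry of~$W=Z\times_{\hilbtwo}Z$. Away from the branch locus~$D$ the map~$q$ is étale of degree~$2$, while along~$D$ the involution~$\sigma$ fixes the exceptional divisor~$E$ pointwise, since the swap acts by~$-1$ on the normal bundle~$\tangent_X$ of the diagonal and hence trivially on~$E\cong\mathbb{P}(\tangent_X)$. A local computation at a ramification point, where~$q$ has the normal form~$t\mapsto s^2$, shows (working in characteristic zero) that~$W$ is \emph{reduced}, consisting of two smooth components isomorphic to~$Z$: the diagonal~$\Delta_Z$ and the graph~$\Gamma_\sigma$ of~$\sigma$, meeting along~$E$. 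This yields the short exact sequence
\begin{equation}
  0\to\mathcal{O}_W\to\mathcal{O}_{\Delta_Z}\oplus\mathcal{O}_{\Gamma_\sigma}\to\mathcal{O}_E\to0.
\end{equation}

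I would then twist this sequence by the line bundle~$p_Z^*\mathcal{O}_Z(E)$, which restricts via the first projection to~$\mathcal{O}_Z(E)$ on each of~$\Delta_Z$ and~$\Gamma_\sigma$ and to~$\mathcal{O}_E(E)\cong\mathcal{O}_E(-1)$ on~$E$, and apply~$\RRR({\identity_X}\times(p_1\circ\pi))_*$ to the resulting triangle, treating the three terms separately. Tracking the maps to~$X\times X$ shows that~$\Delta_Z$ maps through~$\Delta\circ(p_1\circ\pi)$, so its contribution is~$\Delta_*\RRR(p_1\circ\pi)_*\mathcal{O}_Z(E)\cong\mathcal{O}_{\Delta_X}$ by the computation~$\RRR(p_1\circ\pi)_*\mathcal{O}_Z(E)\cong\mathcal{O}_X$ already established in the proof of \cref{corollary:R''F'}; that~$\Gamma_\sigma$ maps birationally onto~$X\times X$ via~$\pi$ composed with the swap, so its contribution is~$\mathcal{O}_{X\times X}$ by~$\RRR\pi_*\mathcal{O}_Z(E)\cong\mathcal{O}_{X\times X}$ (the intermediate step in the same proof); and that the~$E$-term maps through~$\Delta_X$ with the twist restricting to~$\mathcal{O}_E(-1)$, so it vanishes because~$\RRR p_*\mathcal{O}_E(-1)=0$ for the~$\mathbb{P}^{d-1}$-bundle~$p$ with~$d\geq2$. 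As the last term vanishes and the middle term is literally a direct sum, the triangle gives the desired isomorphism onto~$\mathcal{O}_{\Delta_X}\oplus\mathcal{O}_{X\times X}$, with no extension to resolve.

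The main obstacle is the second step: rigorously pinning down the scheme structure of~$Z\times_{\hilbtwo}Z$ near the ramification, namely proving reducedness and the splitting into the two described components meeting along~$E$, together with the assertion that~$\sigma$ fixes~$E$ pointwise. Once this local model of the ramified double cover is in hand, the identification of~$W$ by flat base change and the evaluation of the three pushforwards are formal, relying only on the projection formula and the vanishing~$\RRR p_*\mathcal{O}_E(-1)=0$ that already powered \cref{corollary:R''F'}.
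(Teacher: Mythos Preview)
Your proof is correct and follows the same overall strategy as the paper: apply \eqref{equation:convolution} with~$\mathcal{E}=i_*\mathcal{O}_Z$, use flat base change along~$\identity_X\times q$ to identify the pullback with the structure sheaf of the fibre product~$W$, and recognise~$W$ as two copies of~$Z$ glued transversally along~$E$. The paper's~$Z_1\cup Z_2$ is exactly your~$W=Z\times_{\hilbtwo}Z$, with your~$\Delta_Z$ and~$\Gamma_\sigma$ corresponding to their~$Z_1$ and~$Z_2$.

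The one genuine difference is the choice of short exact sequence used to decompose~$\mathcal{O}_W$. The paper uses the filtration sequence
\[
  0\to\mathcal{O}_{Z_1}(-E)\to\mathcal{O}_{Z_1\cup Z_2}\to\mathcal{O}_{Z_2}\to 0,
\]
twists by~$p_Z^*\mathcal{O}_Z(E)$, and after pushing forward obtains a triangle~$\mathcal{O}_{\Delta_X}\to(\text{kernel})\to\mathcal{O}_{X\times X}\xrightarrow{+1}$ which must then be split by invoking~$\Hom(\mathcal{O}_{X\times X},\mathcal{O}_{\Delta_X}[1])\cong\HH^1(X,\mathcal{O}_X)=0$. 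Your Mayer--Vietoris sequence places~$\mathcal{O}_W$ as a \emph{sub}sheaf of the direct sum, so that after the twist the third term becomes~$\mathcal{O}_E(-1)$ and dies under~$\RRR p_*$; the desired direct-sum description then falls out without any extension to resolve. This is a small but pleasant simplification. Note that the exceptionality of~$\mathcal{O}_X$ is still needed in your argument, hidden in the step~$\RRR(p_1\circ\pi)_*\mathcal{O}_Z(E)\cong\mathcal{O}_X$ (which factors through~$\RRR p_{1,*}\mathcal{O}_{X\times X}$), so the hypothesis is not actually bypassed.
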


\begin{proof}
  We define~$Z_1\cup Z_2$ using the cartesian diagram
  \begin{equation}
    \label{equation:glue}
    \begin{tikzcd}
      Z_1\cup Z_2 \arrow[r, hook, "\iota"] \arrow[d] \arrow[dr, phantom, "\square"] & X\times Z \arrow[d, "\identity_X\times q"] \\
      Z \arrow[r, hook, "i"] & X\times \hilbtwo.
    \end{tikzcd}
  \end{equation}
  Moreover there exist isomorphisms~$\phi\colon Z_1 \cup Z_2 \cong Z \cup_E Z$ exhibiting~$Z_1\cup Z_2$ as the gluing along the exceptional divisor~$E$, transversally, of its two irreducible components, both of which are isomorphic to~$Z$. Moreover, we can assume that the left vertical morphism in \eqref{equation:glue} is the identity on each component by changing~$\phi$ if necessary. In this case we have
  \begin{equation}
    \iota_j\coloneqq\iota|_{Z_j}=(p_j \circ \pi,\identity)\colon Z_j \cong Z \to X \times Z.
  \end{equation}
  If we now take~$\mathcal{E}$ to be~$i_*\mathcal{O}_Z$ in \eqref{equation:convolution}, then the kernel of~$R''\circ F''$ is
  \begin{equation}
    \begin{aligned}
      \left( (i_*\mathcal{O}_Z)^\vee\otimes p_X^*\omega_X[d] \right)\ast i_*\mathcal{O}_Z &\cong \RRR({\identity_X}\times (p_1\circ \pi))_*\left(({\identity_X}\times q)^*\circ i_*\mathcal{O}_Z\otimes p_Z^*\mathcal{O}_Z(E)\right)\\
    &\cong \RRR({\identity_X}\times (p_1\circ \pi))_*\left(\iota_*\mathcal{O}_{Z_1\cup Z_2}\otimes p_Z^*\mathcal{O}_Z(E)\right),
    \end{aligned}
  \end{equation}
  where the second isomorphism follows from \eqref{equation:glue} by base change.

  The short exact sequence
  \begin{equation}
    0\to \mathcal{O}_{Z_1}(-E)\to \mathcal{O}_{Z_1\cup Z_2}\to \mathcal{O}_{Z_2}\to 0
  \end{equation}
  gives a distinguished triangle
  \begin{equation}
    \RRR({\identity_X}\times (p_1\circ \pi))_*\left(\iota_{1,*}\mathcal{O}_{Z_1}\right) \to \RRR({\identity_X}\times (p_1\circ \pi))_*\left(\iota_*\mathcal{O}_{Z_1\cup Z_2}\otimes p_Z^*\mathcal{O}_Z(E)\right) \to \RRR({\identity_X}\times (p_1\circ \pi))_*\left(\iota_{2,*}\mathcal{O}_{Z_2}(E)\right) \xrightarrow{+1}
  \end{equation}
  Since
  \begin{itemize}
    \item the composition~$Z_1\xrightarrow{\iota_1} X\times Z \xrightarrow{{\identity_X}\times (p_1\circ \pi)} X\times X$  is~$\Delta\circ p_1\circ \pi$;
    \item the composition~$Z_2\xrightarrow{\iota_2} X\times Z\xrightarrow{{\identity_X}\times (p_1\circ \pi)} X\times X$ is~$s\circ \pi$,
  \end{itemize}
  with~$s$ being the natural involution on~$X\times X$, the previous distinguished triangle is nothing but
  \begin{equation}
    \RRR(\Delta\circ p_1\circ \pi)_*\mathcal{O}_{Z_1}\to\left( (i_*\mathcal{O}_Z)^\vee\otimes p_X^*\omega_X[d] \right)\ast i_*\mathcal{O}_Z\to \RRR(s\circ \pi)_*\mathcal{O}_{Z_2}(E) \xrightarrow{+1}
  \end{equation}
  which is the following by the vanishing of~$\HH^i(X, \mathcal{O}_X)$ for~$i\geq 1$,
  \begin{equation}
    \mathcal{O}_{\Delta_X}\to\left( (i_*\mathcal{O}_Z)^\vee\otimes p_X^*\omega_X[d] \right)\ast i_*\mathcal{O}_Z \to \mathcal{O}_{X\times X} \xrightarrow{+1}.
  \end{equation}
  Now since the extension class of this distinguished triangle lives in
  \begin{equation}
    \Hom_{\derived^\bounded(X\times X)}(\mathcal{O}_{X\times X}, \mathcal{O}_{\Delta_X}[1])\cong \HH^1(X, \mathcal{O}_X)=0,
  \end{equation}
  the kernel of~$R''\circ F''$ is
  \begin{equation}
    \label{equation:R''F''}
    (i_*\mathcal{O}_Z)^\vee\otimes p_X^*\omega_X[d] \ast i_*\mathcal{O}_Z \cong \mathcal{O}_{\Delta_X}\oplus \mathcal{O}_{X\times X}
  \end{equation}
\end{proof}

These corollaries allow us to obtain the following proposition.

\begin{proposition}
  \label{proposition:iso-1}
  There is an isomorphism of Fourier--Mukai kernels
  \begin{equation}
    \left( \mathcal{O}_Z^\vee \otimes p_X^*\omega_X[d] \right)\ast\mathcal{I} \cong \mathcal{O}_{\Delta}[-1],
  \end{equation}
  and hence~$R''\circ F \cong [-1]$.
\end{proposition}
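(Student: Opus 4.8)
The plan is to apply the triangulated functor $\left(\mathcal{O}_Z^\vee\otimes p_X^*\omega_X[d]\right)\ast(-)$ to the universal short exact sequence \eqref{equation:universal}, regarded as a distinguished triangle $\mathcal{I}\to\mathcal{O}_{X\times\hilbtwo}\to i_*\mathcal{O}_Z\xrightarrow{+1}$. Since convolution with a fixed kernel is exact, this produces a distinguished triangle whose outer terms are the kernels of $R''\circ F'$ and $R''\circ F''$. By \cref{corollary:R''F',corollary:R''F''} these are $\mathcal{O}_{X\times X}$ and $\mathcal{O}_{\Delta_X}\oplus\mathcal{O}_{X\times X}$ respectively, so writing $K\coloneqq\left(\mathcal{O}_Z^\vee\otimes p_X^*\omega_X[d]\right)\ast\mathcal{I}$ we obtain
\[
  K\to\mathcal{O}_{X\times X}\xrightarrow{\psi}\mathcal{O}_{\Delta_X}\oplus\mathcal{O}_{X\times X}\xrightarrow{+1},
\]
and it remains to identify the cone of $\psi$ with $\mathcal{O}_{\Delta_X}$, for then $K\cong\mathcal{O}_{\Delta_X}[-1]=\mathcal{O}_\Delta[-1]$.

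Next I would analyse the components $\psi=(\psi_1,\psi_2)$. The component $\psi_2\colon\mathcal{O}_{X\times X}\to\mathcal{O}_{X\times X}$ lies in $\Hom_{\derived^\bounded(X\times X)}(\mathcal{O}_{X\times X},\mathcal{O}_{X\times X})\cong\HH^0(X\times X,\mathcal{O}_{X\times X})\cong k$, so it is multiplication by a scalar. If this scalar is nonzero, then an elementary octahedral argument (a change of basis on the target killing $\psi_1$, using that $\psi_2$ is invertible) identifies the cone of $\psi$ with $\mathcal{O}_{\Delta_X}$, irrespective of $\psi_1$. Thus the whole statement reduces to showing that $\psi_2\neq0$.

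The hard part is therefore to pin down $\psi_2$, since the mapping cone is sensitive to the actual morphism and not merely to its source and target. To do this I would trace $\psi$ back through the proofs of \cref{corollary:R''F',corollary:R''F''}: it is induced by the restriction $\mathcal{O}_{X\times\hilbtwo}\to i_*\mathcal{O}_Z$ of \eqref{equation:universal}, which pulls back along $\identity_X\times q$ to the surjection $\mathcal{O}_{X\times Z}\to\iota_*\mathcal{O}_{Z_1\cup Z_2}$, and $\psi_2$ is then the further projection onto the $Z_2$-component $\mathcal{O}_{Z_2}(E)$ followed by $\RRR(\identity_X\times(p_1\circ\pi))_*$. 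After tensoring with $p_Z^*\mathcal{O}_Z(E)$ and using $p_Z\circ\iota_2=\identity_Z$, this composite is the pushforward of the honest restriction $p_Z^*\mathcal{O}_Z(E)\to\iota_{2,*}\mathcal{O}_Z(E)$ to the closed subscheme $Z_2$. The key observation is that the source $\mathcal{O}_{X\times X}$ (via $\RRR\pi_*\mathcal{O}_Z(E)\cong\mathcal{O}_{X\times X}$) and the target $\mathcal{O}_{X\times X}$ (via $\RRR(s\circ\pi)_*\mathcal{O}_Z(E)\cong\mathcal{O}_{X\times X}$) are both generated, through the inclusion $\mathcal{O}_Z\hookrightarrow\mathcal{O}_Z(E)$ used in the proof of \cref{corollary:R''F'}, by the canonical section of $\mathcal{O}_Z(E)$ cutting out $E$; since restriction to $Z_2$ carries this canonical section to the corresponding one on the target, $\psi_2$ sends the distinguished generator to the distinguished generator and is therefore an isomorphism. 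This yields $\psi_2\neq0$ and hence $K\cong\mathcal{O}_\Delta[-1]$. Finally, the functor statement $R''\circ F\cong[-1]$ is immediate, since the Fourier--Mukai functor with kernel $\mathcal{O}_\Delta[-1]$ is the shift $[-1]$.
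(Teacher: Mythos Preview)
Your proof is correct and follows essentially the same approach as the paper: both identify the kernel via the distinguished triangle obtained from the universal sequence, reducing to the claim that the map $\mathcal{O}_{X\times X}\to\mathcal{O}_{\Delta_X}\oplus\mathcal{O}_{X\times X}$ is $(0,\identity)$ up to isomorphism. Your observation that it suffices to check $\psi_2\neq 0$ (with $\psi_1$ then absorbed by a change of basis on the target) is a clean way to phrase what the paper leaves as ``straightforward to check'', and your tracing of $\psi_2$ through the canonical section of $\mathcal{O}_Z(E)$ is the right idea.
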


\begin{proof}
  In the distinguished triangle~$R''\circ F\to R''\circ F'\to R''\circ F''\xrightarrow{+1}$ (or rather that of their kernels), it is straightforward to check that the second morphism is induced by~$(0, \identity)\colon \mathcal{O}_{X\times X} \hookrightarrow \mathcal{O}_{\Delta_X}\oplus \mathcal{O}_{X\times X}$, via the isomorphisms \eqref{equation:R''F'} and \eqref{equation:R''F''}. Therefore, the Fourier--Mukai kernel of~$R''\circ F$ is~$\mathcal{O}_{\Delta_X}[-1]$.
\end{proof}

\begin{proposition}
  \label{proposition:iso-2}
  There is an isomorphism of Fourier--Mukai kernels
  \begin{equation}
    p_X^*\omega_X[d] \ast \mathcal{I} \cong 0,
  \end{equation}
  and hence~$R'\circ F \cong 0$.
\end{proposition}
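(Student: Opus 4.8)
The plan is to follow the strategy of \cref{proposition:iso-1}. Applying the exact functor $(p_X^*\omega_X[d])\ast(-)$ to the universal short exact sequence \eqref{equation:universal} yields a distinguished triangle of Fourier--Mukai kernels
\[
(p_X^*\omega_X[d])\ast\mathcal{I}\to(p_X^*\omega_X[d])\ast\mathcal{O}_{X\times\hilbtwo}\to(p_X^*\omega_X[d])\ast\mathcal{O}_Z\xrightarrow{+1},
\]
which is the triangle of kernels for $R'\circ F\to R'\circ F'\to R'\circ F''$. It therefore suffices to prove that the second arrow is an isomorphism, for then the first vertex $(p_X^*\omega_X[d])\ast\mathcal{I}$ of the triangle must vanish. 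The key simplification is that $R'$ factors through global sections on $\hilbtwo$: by the projection formula and base change along the projection $p_X$ one has $R'(\mathcal{G})\cong\RRR\Gamma(\hilbtwo,\mathcal{G})\otimes\omega_X[d]$.

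Consequently $R'\circ F'(\mathcal{F})$ and $R'\circ F''(\mathcal{F})$ are computed by $\RRR\Gamma(X\times\hilbtwo,p_X^*\mathcal{F}\otimes\mathcal{O}_{X\times\hilbtwo})\otimes\omega_X[d]$ and $\RRR\Gamma(X\times\hilbtwo,p_X^*\mathcal{F}\otimes\mathcal{O}_Z)\otimes\omega_X[d]$ respectively. For the first, the exceptionality of $\mathcal{O}_{\hilbtwo}$ (\cref{lemma:HodgeNumberHilb}) gives $\RRR\Gamma(X\times\hilbtwo,p_X^*\mathcal{F})\cong\RRR\Gamma(X,\mathcal{F})$. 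For the second, writing $\mathcal{O}_Z=i_*\mathcal{O}_Z$ and using $p_X\circ i=p_1\circ\pi$ together with the projection formula reduces it to $\RRR\Gamma(Z,(p_1\circ\pi)^*\mathcal{F})$; then $\RRR\pi_*\mathcal{O}_Z\cong\mathcal{O}_{X\times X}$ and $\RRR\Gamma(X,\mathcal{O}_X)\cong k$ identify this with $\RRR\Gamma(X,\mathcal{F})$ as well. Thus both $R'\circ F'$ and $R'\circ F''$ are naturally the functor $\mathcal{F}\mapsto\RRR\Gamma(X,\mathcal{F})\otimes\omega_X[d]$, whose kernel is $\mathcal{O}_X\boxtimes\omega_X[d]$ on $X\times X$.

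It remains to verify that the map $R'\circ F'\to R'\circ F''$ induced by the restriction $\mathcal{O}_{X\times\hilbtwo}\to\mathcal{O}_Z$ is an isomorphism, and this is the only real point. Under the identifications above it becomes the map $\RRR\Gamma(X,\mathcal{F})=\RRR\Gamma(X\times\hilbtwo,p_X^*\mathcal{F})\to\RRR\Gamma(Z,(p_1\circ\pi)^*\mathcal{F})=\RRR\Gamma(X,\mathcal{F})$ given by restriction along $i$; since both identifications are the pullbacks along $p_X$ and $p_X\circ i=p_1\circ\pi$, which sit in a commuting triangle with this restriction, the induced endomorphism of $\RRR\Gamma(X,\mathcal{F})$ is the identity. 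Hence the second arrow of the triangle is an isomorphism and its cone $(p_X^*\omega_X[d])\ast\mathcal{I}$ vanishes. I expect the main obstacle to lie precisely in this last step: one must keep track of the natural transformation induced by restriction rather than merely observing that the two objects are abstractly isomorphic; equivalently, one must check directly that $\RRR\Gamma(\hilbtwo,F(\mathcal{F}))\cong\RRR\Gamma(X\times\hilbtwo,p_X^*\mathcal{F}\otimes\mathcal{I})=0$ for every $\mathcal{F}$.
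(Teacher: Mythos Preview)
Your argument is correct and follows essentially the same route as the paper: apply the universal short exact sequence, identify the two outer terms using exceptionality of $\mathcal{O}_{\hilbtwo}$ and of $\mathcal{O}_X$, and verify that the restriction map between them is the identity. The paper packages this slightly more cleanly by first reducing the kernel computation via base change to $p_1^*(\RRR p_{X,*}\mathcal{I})\otimes p_2^*\omega_X[d]$ and then proving the single vanishing $\RRR p_{X,*}\mathcal{I}=0$, which avoids passing through functors and the implicit appeal to ``zero functor implies zero kernel''; your own final parenthetical remark is exactly this reduction.
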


\begin{proof}
  The following isomorphisms are obtained by functoriality and the projection formula
  \begin{equation}
    \begin{aligned}
      p_X^*\omega_X[d]\ast\mathcal{I}
      &\coloneqq\RRR p_{1,3,*}\left(p_{1,2}^*\mathcal{I}\otimes^\LLL p_{2,3}^*\circ p_X^*(\omega_X)\right)[d]\\
      &\cong\RRR p_{1,3,*}\left(p_{1,2}^*\mathcal{I}\otimes^\LLL p_{3}^*(\omega_X)\right)[d]\\
      &\cong\RRR p_{1,3,*}\left(p_{1,2}^*\mathcal{I}\otimes^\LLL (p_{1,3}^*\circ p_2^*)(\omega_X)\right)[d]\\
      &\cong\RRR p_{1,3,*}\left(p_{1,2}^*\mathcal{I}\right)\otimes^\LLL p_2^*\omega_X[d].
    \end{aligned}
  \end{equation}
  Thanks to the cartesian diagram
  \begin{equation}
    \begin{tikzcd}
      X\times\hilbtwo\times X \arrow[dr, phantom, "\square"] \arrow[r, "p_{1,2}"] \arrow[d, "p_{1,3}"] & X\times\hilbtwo \arrow[d, "p_X"] \\
      X\times X \arrow[r, "p_1"] & X
    \end{tikzcd}
  \end{equation}
  we have by base change, 
  \begin{equation}
    \RRR p_{1,3,*}\left( p_{1,2}^*\mathcal{I} \right)\cong p_1^*\left( \RRR p_{X,*}(\mathcal{I}) \right).
  \end{equation}

  Therefore, it is enough to show that~$\RRR p_{X,*}\mathcal{I}=0$. To this end, apply~$\RRR p_{X,*}$ to the universal short exact sequence \eqref{equation:universal}, so we obtain a distinguished triangle
  \begin{equation}
    \label{equation:RpSES}
    \RRR p_{X,*}\mathcal{I}\to \RRR p_{X,*}\mathcal{O}_{X\times \hilbtwo}\to \RRR p_{X,*}\circ i_*(\mathcal{O}_Z)\xrightarrow{+1}.
  \end{equation}
  Now by \cref{lemma:HodgeNumberHilb} we have the isomorphism
  \begin{equation}\label{equation:RpO}
    \RRR p_{X,*}\mathcal{O}_{X\times \hilbtwo}\cong\mathcal{O}_X.
  \end{equation}
  and by commutativity of the second square in \eqref{equation:notation} we obtain that
  \begin{equation}
    \label{equation:RpOZ}
    \RRR p_{X,*}\circ i_*(\mathcal{O}_Z)\cong \RRR p_{1,*}\circ\RRR\pi_*(\mathcal{O}_Z)\cong \RRR p_{1,*}\mathcal{O}_{X\times X}\cong\mathcal{O}_X,
  \end{equation}
  where the last equality uses the assumption on the vanishing of~$\HH^i(X,\mathcal{O}_X)$ for $i\geq 1$.

  Finally, one checks that the second morphism in \eqref{equation:RpSES} is, via the above isomorphisms \eqref{equation:RpO} and \eqref{equation:RpOZ}, the identity of $\mathcal{O}_X$. Thus~$\RRR p_{X,*}\mathcal{I}\cong 0$ as desired.
\end{proof}

We can now combine all these ingredients and give the proof of \cref{theorem:fully-faithful}.
\begin{proof}[Proof of \cref{theorem:fully-faithful}]
  By combining the triangle
  \begin{equation}
    \left( \mathcal{O}_Z^\vee \otimes p_X^*\omega_X[d]\right) \ast \mathcal{I} \to \left( p_X^*\omega_X[d] \right)\ast \mathcal{I} \to \left( \mathcal{I}^\vee \otimes p_X^*\omega_X[d] \right) \ast \mathcal{I} \xrightarrow{+1}
  \end{equation}
  with \cref{proposition:iso-1,proposition:iso-2}, we conclude that
  \begin{equation}
    \left( \mathcal{I}^\vee \otimes p_X^*\omega_X[d] \right) \ast \mathcal{I} \cong \mathcal{O}_{\Delta},
  \end{equation}
  and hence~$R\circ F \cong \identity_{\derived^\bounded(X)}$.
\end{proof}

Some remarks are in order.

\begin{remark}
  \label{remark:krug-ploog-sosna}
  This is not the first fully faithful functor from~$\derived^\bounded(X)$ to~$\derived^\bounded(X^{[2]})$. In \cite[theorem~4.1]{MR3811590} Krug--Ploog--Sosna constructed a semiorthogonal decomposition of the form
  \begin{equation}
    \derived^\bounded(\hilbtwo)
    =
    \left\langle
      \derived_{\mathbb{Z}/2\mathbb{Z}}^\bounded(X^2),
      \derived^\bounded(X),
      \ldots,
      \derived^\bounded(X)
    \right\rangle,
  \end{equation}
  where the first component is the equivariant derived category (or the derived category of the quotient stack~$[X^2/(\mathbb{Z}/2{Z})]$), and there are~$d-2$ copies of~$\derived^\bounded(X)$. For~$d=2$ this is an instance of the derived McKay correspondence of Bridgeland--King--Reid.

  Remark that this semiorthogonal decomposition is \emph{independent} of the geometry of~$X$, and in particular also works when~$\mathcal{O}_X$ is not an exceptional line bundle. But the fully faithful functors used in this decomposition are not related to the interpretation of~$\hilbtwo$ as a fine moduli space.

  For the remainder of this article we need the fully faithful functor to have a modular interpretation, as this is what allows us to compare the deformation theories of~$X$ and~$\hilbtwo$.
\end{remark}

\begin{remark}
  \label{remark:krug}
  In \cite[theorem~1.3]{1808.05931v1} it was shown that~$\Phi_{\mathcal{I}}$ (denoted in~$(-)^{[n]}$ in loc.~cit.) is always \emph{faithful}, independent of the properties of~$X$, when the number of points is subject to a numerical condition. This condition is satisfied for~$n=2$, so the existence of the left inverse in loc.~cit gives the claimed faithfulness.

  Notice that the faithfulness part in \cref{theorem:fully-faithful} follows in a straightforward way from the fullness of the Fourier--Mukai transform, as explained in the introduction of \cite{MR3054300}, and this is therefore the main new result in \cref{theorem:fully-faithful}.
\end{remark}

\begin{remark}
  In dimension~1,~$X$ is necessarily~$\mathbb{P}^1$, and the functor~$\Phi_{\mathcal{I}}$ is \emph{not} fully faithful. In this case~$\mathbb{P}^{1,[2]}\cong\Sym^2\mathbb{P}^1\cong\mathbb{P}^2$. In fact, there does not exist a fully faithful functor from~$\derived^\bounded(\mathbb{P}^1)$ to~$\derived^\bounded(\mathbb{P}^2)$, as there is no strong exceptional collection of length~2~in~$\derived^\bounded(\mathbb{P}^2)$ with~2\dash dimensional Hom-space. More precisely, the dimensions of the Hom-spaces between two exceptional objects on $\mathbb{P}^2$ are known, see \cite[proposition~8.1]{1805.00294v3} and \cite[example~3.2]{MR1230966}, and are given by the Markov numbers, i.e.~integers occurring in solutions of the Markov equation
  \begin{equation}
    x^2+y^2+z^2=xyz.
  \end{equation}
  As~2 is not a Markov number, it is indeed impossible to find a fully faithful functor from~$\derived^\bounded(\mathbb{P}^1)$ to~$\derived^\bounded(\mathbb{P}^2)$.
\end{remark}

\section{The relative local-to-global Ext spectral sequence}
\label{section:relative-ext}
In this section we prove \cref{theorem:B}. More precisely we prove the following statement.

\begin{theorem}
  \label{theorem:spectral-sequence}
  Let~$X$ be a smooth projective variety of dimension~$\geq 2$, such that~$\mathcal{O}_X$ is exceptional. Then there is a spectral sequence
  \begin{equation}
    \label{equation:spectral-sequence}
    \mathrm{E}_2^{i,j}=
    \begin{cases}
      k & \text{ if } i=j=0, \\
      0 & \text{ if } i \neq 0, j=0, \\
      \HH^i\left( \hilbtwo,q_*\left( \bigwedge^j\mathcal{N}_{Z/X\times \hilbtwo} \right) \right) & \text{ if } 1 \leq j \leq d-1, \\
      0 & \text{ if } j \geq d.
    \end{cases}
    \Rightarrow \HHHH^{i+j}(X)
  \end{equation}
  converging to the Hochschild cohomology of~$X$. This spectral sequences degenerates at the~$\mathrm{E}_2$-page and
  \begin{equation}
    \mathrm{E}_\infty^{i,j}=\mathrm{E}_2^{i,j}=
    \begin{cases}
      0 & \text{ if } j<0 \text{ or } j \geq d \\
      \HH^i(X,\bigwedge^j \tangent_X) & \text{ if } 1 \leq j \leq d-2 \\
      \text{an extension of } \HH^{i-1}(X,\omega_X^{-1}) \text{ by } \HH^i(X,\bigwedge^{d-1}\tangent_X) & \text{ if } j=d-1
    \end{cases}
  \end{equation}
\end{theorem}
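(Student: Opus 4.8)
The plan is to apply the relative local-to-global $\Ext$ spectral sequence for the projection $p_{\hilbtwo}$ to the object $\RsheafHom(\mathcal{I},\mathcal{I})$, so that $\mathrm{E}_2^{i,j}=\HH^i(\hilbtwo,\sheafExt^j_{p_{\hilbtwo}}(\mathcal{I},\mathcal{I}))$ converges to $\Ext^{i+j}_{X\times\hilbtwo}(\mathcal{I},\mathcal{I})$. The abutment is identified with $\HHHH^{i+j}(X)$ exactly as in \eqref{equation:hhiso}, using the full faithfulness of $\Phi_{\mathcal{I}}$ from \cref{theorem:fully-faithful}. Thus everything reduces to (i) computing the relative $\Ext$-sheaves, and (ii) proving degeneration while identifying $\mathrm{E}_\infty$.

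For (i), I would first compute the local $\sheafExt^j(\mathcal{I},\mathcal{I})$ on $X\times\hilbtwo$. Since $Z$ is smooth of codimension $d$ with ideal sheaf $\mathcal{I}$, the self-$\sheafExt$ of $\mathcal{O}_Z$ is $\sheafExt^k(\mathcal{O}_Z,\mathcal{O}_Z)\cong\bigwedge^k\mathcal{N}_{Z/X\times\hilbtwo}$, and a triangle chase with the universal sequence \eqref{equation:universal} (using $\RsheafHom(\mathcal{O}_Z,\mathcal{O})\cong\det\mathcal{N}_{Z/X\times\hilbtwo}[-d]$) yields $\sheafExt^0\cong\mathcal{O}_{X\times\hilbtwo}$, $\sheafExt^j\cong i_*\bigwedge^j\mathcal{N}_{Z/X\times\hilbtwo}$ for $1\le j\le d-1$, and $0$ for $j\ge d$. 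The key point is the shift: the top power $\det\mathcal{N}_{Z/X\times\hilbtwo}$ never occurs, which is what makes the $E_2$-page vanish for $j\ge d$. Pushing forward, $q=p_{\hilbtwo}|_Z$ is finite and $\RRR p_{\hilbtwo,*}\mathcal{O}_{X\times\hilbtwo}\cong\mathcal{O}_{\hilbtwo}$ (as $\mathcal{O}_X$ is exceptional), so the local-to-relative spectral sequence collapses and $\sheafExt^j_{p_{\hilbtwo}}(\mathcal{I},\mathcal{I})\cong q_*\bigwedge^j\mathcal{N}_{Z/X\times\hilbtwo}$; together with $\HH^{>0}(\hilbtwo,\mathcal{O}_{\hilbtwo})=0$ from \cref{lemma:HodgeNumberHilb} this gives the stated $E_2$-page.

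To evaluate these terms, note that since $q$ is finite $\mathrm{E}_2^{i,j}=\HH^i(Z,\bigwedge^j\mathcal{N})$ with $\mathcal{N}\coloneqq\mathcal{N}_{Z/X\times\hilbtwo}$, for $1\le j\le d-1$. The geometric heart is to identify $\mathcal{N}$: combining the normal sequence of the closed immersion $i=(p_1\circ\pi,q)$ with the exact sequence $0\to\tangent_Z\to q^*\tangent_{\hilbtwo}\to\mathcal{O}_E(2E)\to 0$ established earlier, a diagram chase gives $0\to a^*\tangent_X\to\mathcal{N}\to\mathcal{O}_E(2E)\to 0$, where $a\coloneqq p_1\circ\pi\colon Z\to X$ (the determinant of this sequence reproduces the formula $\det\mathcal{N}\cong\mathcal{O}_Z(E)\otimes a^*\omega_X^{-1}$ computed earlier). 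Taking exterior powers produces $0\to a^*\bigwedge^j\tangent_X\to\bigwedge^j\mathcal{N}\to\mathcal{O}_E(2E)\otimes\bigwedge^{j-1}\mathcal{Q}\to 0$, with $\mathcal{Q}$ the rank-$(d-1)$ tautological quotient on $E\cong\mathbb{P}(\tangent_X)$. The subsheaf pushes down to $\HH^i(X,\bigwedge^j\tangent_X)$ via $\RRR\pi_*\mathcal{O}_Z\cong\mathcal{O}_{X\times X}$, the projection formula, and Künneth (again using $\mathcal{O}_X$ exceptional). For the quotient, $\RRR p_*$ along $E\to X$ is governed fibrewise by Bott's formula, since $\mathcal{O}_E(2E)\otimes\bigwedge^{j-1}\mathcal{Q}$ restricts to $\Omega^{d-j}_{\mathbb{P}^{d-1}}(d-j-1)$: this is acyclic for $1\le j\le d-2$ and equals $\omega_X^{-1}[-1]$ for $j=d-1$. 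Hence $\mathrm{E}_2^{i,j}\cong\HH^i(X,\bigwedge^j\tangent_X)$ for $j\le d-2$, while for $j=d-1$ the long exact sequence exhibits $\mathrm{E}_2^{i,d-1}$ as a subquotient of an extension of $\HH^{i-1}(X,\omega_X^{-1})$ by $\HH^i(X,\bigwedge^{d-1}\tangent_X)$.

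For (ii), I expect the cleanest route to avoid computing the connecting maps of the $j=d-1$ sequence directly, using instead a dimension count: the upper bounds above sum, over $i+j=n$, to $\sum_{p+q=n}\dim\HH^p(X,\bigwedge^q\tangent_X)=\dim\HHHH^n(X)$, the weight-$d$ piece $\HH^{n-d}(X,\bigwedge^d\tangent_X)$ being accounted for by the $\omega_X^{-1}$ summand at $j=d-1$. Since the spectral sequence converges to $\HHHH^n(X)$, the chain $\dim\HHHH^n(X)\le\sum_{i+j=n}\dim\mathrm{E}_2^{i,j}\le\sum(\text{upper bounds})=\dim\HHHH^n(X)$ forces equality throughout: all differentials vanish (degeneration at $\mathrm{E}_2$), and simultaneously the $j=d-1$ connecting maps vanish, so $\mathrm{E}_2^{i,d-1}$ is the full extension claimed. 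I expect the main obstacle to be the content of the previous paragraph — pinning down $\mathcal{N}$ and carrying out the Bott computation that isolates the $j=d-1$ threshold and the $\omega_X^{-1}$ contribution — since once the $E_2$-page is known in this form, degeneration is essentially forced by the Euler characteristic. Finally, matching the induced filtration with the one from the Hochschild--Kostant--Rosenberg decomposition (up to the stated degree shift) amounts to comparing this spectral sequence with the local-to-global sequence for $\Ext_{X\times X}(\Delta_*\mathcal{O}_X,\Delta_*\mathcal{O}_X)$ through $\Phi_{\mathcal{I}}$; the shift reflects the merging of the top two weights $q=d-1$ and $q=d$ into the single level $j=d-1$, together with the shift already visible in $R''\circ F\cong[-1]$.
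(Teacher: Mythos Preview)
Your proposal is correct and follows the same overall strategy as the paper: identify the abutment with $\HHHH^\bullet(X)$ via full faithfulness, compute the $\mathrm{E}_2$-page through the short exact sequence $0\to a^*\tangent_X\to\mathcal{N}\to j_*\mathcal{O}_E(2E)\to 0$ and its exterior powers, and then force degeneration by the HKR dimension count. The only tactical differences are that you obtain $\sheafExt^j_{p_{\hilbtwo}}(\mathcal{I},\mathcal{I})$ by first computing the absolute $\sheafExt^j(\mathcal{I},\mathcal{I})$ and collapsing the local-to-relative spectral sequence (using that $q$ is finite and $\mathcal{O}_X$ exceptional), whereas the paper pushes forward a $3\times 3$ diagram of triangles directly; and your Bott-formula computation for the quotient term is equivalent to the paper's inductive use of the relative Euler sequence, since $\mathcal{Q}=\tangent_p(-1)$ and hence $\mathcal{O}_E(2E)\otimes\bigwedge^{j-1}\mathcal{Q}=\bigwedge^{j-1}\tangent_p(-j-1)$.
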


To do so we will use the relative local-to-global Ext spectral sequence. The abutment of this spectral sequence is then closely related to the Hochschild cohomology of a variety as discussed in \cref{subsection:hochschild-cohomology}. In \cref{proposition:iso-tangent} we also give an interpretation to one of the sheaves appearing in the spectral sequence. This allows us to relate the deformation theories of~$X$ and~$\hilbtwo$ in \cref{subsection:dim-3-interpretation}.

We will briefly recall the notion of relative Ext in our setting. Let~$f\colon X\to Y$ be a morphism between smooth projective varieties, and~$\mathcal{E},\mathcal{F}$ are coherent sheaves on~$X\times Y$, which are flat over~$Y$. We will denote~$p_Y\colon X\times Y\to Y$ the second projection.
\begin{definition}
  The \emph{$j$th relative Ext} of~$\mathcal{E}$ and~$\mathcal{F}$ is the~$j$th derived functor of the composition~$p_{Y,*}\circ\sheafHom$, i.e.
  \begin{equation}
    \sheafExt_{p_Y}^j(\mathcal{E},\mathcal{F})\coloneqq\mathcal{H}^j\left( \RRR p_{Y,*}\RsheafHom(\mathcal{E},\mathcal{F}) \right)\cong\mathcal{H}^j\left( \RRR p_{Y,*}(\mathcal{E}^\vee\otimes^\LLL\mathcal{F}) \right),
  \end{equation}
  where~$(-)^\vee$ denotes the derived dual.
\end{definition}

We can use the Grothendieck spectral sequence to compute global Ext on~$X\times Y$, via the \emph{relative local-to-global Ext spectral sequence}
\begin{equation}
  \mathrm{E}_2^{i,j}=\HH^i(Y,\sheafExt_{p_Y}^j(\mathcal{E},\mathcal{F}))\Rightarrow\Ext_{X\times Y}^{i+j}(\mathcal{E},\mathcal{F}).
\end{equation}
This spectral sequence plays an important role in what follows.

\subsection{Relative Ext for the Hilbert square}
\label{subsection:relative-ext-hilb-square}
Just like before, unless specified otherwise, $X$ denotes a smooth projective variety of dimension~$d \geq 2$ with an exceptional structure sheaf. Recall from \eqref{equation:projection-diagram-notation} that~$p_{\hilbtwo}\colon X \times \hilbtwo \to \hilbtwo$ denotes the projection onto the second component. We will be interested in
\begin{equation}
  \sheafExt_{p_{\hilbtwo}}^j(\mathcal{I},\mathcal{I})\cong\mathcal{H}^j\left( \RRR p_{\hilbtwo,*}\left( \mathcal{I}^\vee\otimes^\LLL\mathcal{I} \right) \right).
\end{equation}
Then the relative local-to-global Ext spectral sequence for~$\mathcal{I} \in \coh X \times \hilbtwo$ takes the form
\begin{equation}
  \label{equation:l-t-g-ss}
  \mathrm{E}_2^{i,j}=\HH^i(\hilbtwo,\sheafExt^j_{p_{\hilbtwo}}(\mathcal{I},\mathcal{I})) \Rightarrow \Ext^{i+j}_{X \times \hilbtwo}(\mathcal{I},\mathcal{I}).
\end{equation}

To relate the Hochschild cohomology of~$X$ to the abutment of this spectral sequence we use the following standard lemma.

\begin{lemma}
  Let~$X_1,X_2,Y_1,Y_2$ be smooth projective varieties. If the functors $\Phi_{\mathcal{P}_i}\colon\derived^\bounded(X_i) \to \derived^\bounded(Y_i)$ are fully faithful~for~$i=1,2$, then the functor
  \begin{equation}
    \Phi_{\mathcal{P}_1 \boxtimes \mathcal{P}_2}\colon\derived^\bounded(X_1 \times X_2) \to \derived^\bounded(Y_1 \times Y_2)
  \end{equation}
  is also fully faithful.
\end{lemma}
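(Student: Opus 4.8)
The plan is to reduce the fully faithfulness of $\Phi\coloneqq\Phi_{\mathcal{P}_1\boxtimes\mathcal{P}_2}$ to that of $\Phi_{\mathcal{P}_1}$ and $\Phi_{\mathcal{P}_2}$ by first testing it on external products $A_1\boxtimes A_2$ and then propagating the conclusion to all objects by a generation argument. After the evident reordering of factors, the kernel $\mathcal{P}_1\boxtimes\mathcal{P}_2$ lives on $(X_1\times X_2)\times(Y_1\times Y_2)$, so $\Phi$ is an honest Fourier--Mukai functor $\derived^\bounded(X_1\times X_2)\to\derived^\bounded(Y_1\times Y_2)$, and it suffices to prove that for all objects $A,B$ the induced map $\Ext^\bullet(A,B)\to\Ext^\bullet(\Phi A,\Phi B)$ is an isomorphism.

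First I would record the compatibility of the integral transform with external products: for $A_i\in\derived^\bounded(X_i)$ there is a natural isomorphism
\[
  \Phi(A_1\boxtimes A_2)\cong\Phi_{\mathcal{P}_1}(A_1)\boxtimes\Phi_{\mathcal{P}_2}(A_2).
\]
Since the kernel is an external product and the projections from the triple product split as products of the projections on each index, the pullback--tensor--pushforward defining $\Phi$ factors through the corresponding operations on $X_1\times Y_1$ and $X_2\times Y_2$, by flat base change and the projection formula. This factorization is the step with the most moving parts, although it is routine Künneth bookkeeping.

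The key homological input is the Künneth formula for $\Ext$: for smooth projective $V,W$ over $k$ one has
\[
  \Ext^n_{V\times W}(A_1\boxtimes A_2,B_1\boxtimes B_2)\cong\bigoplus_{p+q=n}\Ext^p_V(A_1,B_1)\otimes_k\Ext^q_W(A_2,B_2),
\]
with no correction terms since we work over a field. Applying this both downstairs on $X_1\times X_2$ and upstairs on $Y_1\times Y_2$, and combining it with the previous isomorphism and the full faithfulness of each $\Phi_{\mathcal{P}_i}$ (so that $\Ext^\bullet_{X_i}(A_i,B_i)\to\Ext^\bullet_{Y_i}(\Phi_{\mathcal{P}_i}A_i,\Phi_{\mathcal{P}_i}B_i)$ is an isomorphism), I obtain that $\Phi$ induces an isomorphism on $\Ext^\bullet$ between any two external products.

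Finally I would propagate from external products to arbitrary objects. Fixing $A=A_1\boxtimes A_2$, the objects $B$ for which $\Ext^\bullet(A,B)\to\Ext^\bullet(\Phi A,\Phi B)$ is an isomorphism form a thick subcategory of $\derived^\bounded(X_1\times X_2)$ containing all external products; since external products classically generate $\derived^\bounded(X_1\times X_2)$ (for instance the twists $\mathcal{O}_{X_1}(a)\boxtimes\mathcal{O}_{X_2}(b)$ already do), the property holds for all $B$. Running the same argument once more with $B$ fixed and $A$ now arbitrary yields the isomorphism for all $A,B$, so $\Phi$ is fully faithful. The one point deserving a word of justification is the generation statement, which is standard. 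Alternatively, one can argue on kernels: the right-adjoint kernel of $\Phi$ is the external product of those of the $\Phi_{\mathcal{P}_i}$ (using $(\mathcal{A}\boxtimes\mathcal{B})^\vee\cong\mathcal{A}^\vee\boxtimes\mathcal{B}^\vee$ and $\omega_{Y_1\times Y_2}\cong\omega_{Y_1}\boxtimes\omega_{Y_2}$), convolution commutes with external products, whence the composite kernel is $\mathcal{O}_{\Delta_{X_1}}\boxtimes\mathcal{O}_{\Delta_{X_2}}\cong\mathcal{O}_{\Delta_{X_1\times X_2}}$; one then checks the unit is an isomorphism, for example by evaluating on external products as above.
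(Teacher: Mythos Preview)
Your proof is correct. Your primary argument---testing on external products via K\"unneth and then propagating through generation by a thick-subcategory argument---is a genuinely different route from the paper's. The paper instead goes straight to the kernel level: it observes that the right-adjoint kernel of $\Phi_{\mathcal{P}_1\boxtimes\mathcal{P}_2}$ is $\mathcal{P}_1^{\RR}\boxtimes\mathcal{P}_2^{\RR}$, that convolution respects external products, and hence that the composite kernel is $\mathcal{O}_{\Delta_1}\boxtimes\mathcal{O}_{\Delta_2}\cong\mathcal{O}_{\Delta_{X_1\times X_2}}$. This is precisely the alternative you sketch in your final paragraph, and it is the whole of the paper's proof---no evaluation on objects at all. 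The kernel argument is shorter and avoids the generation step; your object-level approach is a bit more hands-on but makes the K\"unneth input explicit and sidesteps any worry about identifying the unit map at the kernel level. Either way the content is the same standard bookkeeping.
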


\begin{proof}
  If~$\mathcal{P}_i^\RR$ denotes the kernel of the right adjoint to~$\Phi_{\mathcal{P}_i}$, then fully faithfulness of~$\Phi_{\mathcal{P}_i}$ is equivalent to the isomorphism
  \begin{equation}
    \Phi_{\mathcal{P}_i^\RR} \circ \Phi_{\mathcal{P}_i} \cong \Phi_{\mathcal{O}_{\Delta_i}}.
  \end{equation}
  One can check that the right adjoint to~$\Phi_{\mathcal{P}_1 \boxtimes \mathcal{P}_2}$ is given by $\Phi_{\mathcal{P}_1^\RR\boxtimes\mathcal{P}_2^\RR}$, and moreover
  \begin{equation}
    \begin{aligned}
      \Phi_{\mathcal{P}_1^\RR \boxtimes \mathcal{P}_2^\RR} \circ  \Phi_{\mathcal{P}_1 \boxtimes \mathcal{P}_2} &\cong  \Phi_{\mathcal{O}_{\Delta_1} \boxtimes \mathcal{O}_{\Delta_2}} \\
      &\cong \Phi_{\mathcal{O}_{\Delta_{1,2}}},
    \end{aligned}
  \end{equation}
  where $\Delta_{1,2}\colon X_1 \times X_2 \to (X_1 \times X_2) \times (X_1 \times X_2)$ is the diagonal morphism. Hence~$\Phi_{\mathcal{P}_1 \boxtimes \mathcal{P}_2}$ is also fully faithful.
\end{proof}

\begin{corollary}
  The functor
  \begin{equation}
    \Phi_{\mathcal{O}_{\Delta_X} \boxtimes \mathcal{I}}\colon\derived^\bounded(X \times X) \to \derived^\bounded(X \times \hilbtwo)
  \end{equation}
  is fully faithful.
\end{corollary}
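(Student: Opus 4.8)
The plan is to apply the preceding lemma directly, taking the two input Fourier--Mukai functors to be the identity on $\derived^\bounded(X)$ and the functor $\Phi_{\mathcal{I}}$ respectively. Concretely, I would set $X_1 = Y_1 = X$ with kernel $\mathcal{P}_1 = \mathcal{O}_{\Delta_X}$, and $X_2 = X$, $Y_2 = \hilbtwo$ with kernel $\mathcal{P}_2 = \mathcal{I}$. Then $\mathcal{P}_1 \boxtimes \mathcal{P}_2 = \mathcal{O}_{\Delta_X} \boxtimes \mathcal{I}$ is precisely the kernel appearing in the statement, so it suffices to check that each $\Phi_{\mathcal{P}_i}$ is fully faithful.

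The first of these is immediate: $\Phi_{\mathcal{O}_{\Delta_X}}$ is the identity functor on $\derived^\bounded(X)$, whose right adjoint has kernel $\mathcal{O}_{\Delta_X}$ as well, so that $\Phi_{\mathcal{O}_{\Delta_X}} \circ \Phi_{\mathcal{O}_{\Delta_X}} \cong \Phi_{\mathcal{O}_{\Delta_X}}$ and the unit of the adjunction is an isomorphism; hence it is (trivially) fully faithful. The second, $\Phi_{\mathcal{P}_2} = \Phi_{\mathcal{I}}$, is fully faithful by \cref{theorem:fully-faithful}, which applies since $X$ is a smooth projective variety of dimension at least $2$ with exceptional structure sheaf. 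With both hypotheses of the lemma verified, the conclusion that $\Phi_{\mathcal{O}_{\Delta_X} \boxtimes \mathcal{I}}$ is fully faithful follows at once.

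There is, in effect, no real obstacle in this argument: all the substance has already been carried out, namely the fully faithfulness of $\Phi_{\mathcal{I}}$ (\cref{theorem:fully-faithful}) and the stability of fully faithfulness under external tensor products (the preceding lemma). The only point worth spelling out is the elementary observation that the identity functor is realised as the Fourier--Mukai transform with kernel $\mathcal{O}_{\Delta_X}$, which is standard. This corollary is what will later allow us to transport the computation of $\HHHH^\bullet(X)$ as the self-$\Ext$ of the diagonal into the setting of $X \times \hilbtwo$, feeding into the abutment of the spectral sequence of \cref{theorem:spectral-sequence}.
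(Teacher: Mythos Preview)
Your proposal is correct and follows exactly the same approach as the paper: apply the preceding lemma with $\mathcal{P}_1=\mathcal{O}_{\Delta_X}$ (the identity functor) and $\mathcal{P}_2=\mathcal{I}$, invoking \cref{theorem:fully-faithful} for the latter. The paper's proof is the one-line version of what you wrote.
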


\begin{proof}
  This is now immediate by the previous lemma and \cref{theorem:fully-faithful}.
\end{proof}

This functor sends~$\mathcal{O}_{\Delta_X}$ to~$\mathcal{I}$. Hence
\begin{equation}
  \HHHH^k(X)\cong\Ext_{X\times X}^k(\mathcal{O}_{\Delta_X},\mathcal{O}_{\Delta_X})\cong\Ext_{X\times\hilbtwo}^k(\mathcal{I},\mathcal{I})
\end{equation}
for all~$k\geq 0$, so the spectral sequence \eqref{equation:l-t-g-ss} abuts to the Hochschild cohomology of~$X$. This is the spectral sequence in \cref{theorem:spectral-sequence}, and we now proceed to analyse the terms on the second page more closely.

Recall from \eqref{equation:notation} that~$q\colon\Bl_\Delta(X \times X) \to \hilbtwo$ denotes the quotient~by $\mathbb{Z}/2\mathbb{Z}$. We can then prove the following theorem, which is part of the statement of \cref{theorem:spectral-sequence}.

\begin{theorem}
  \label{theorem:rel-ext}
  The relative Ext sheaves are given by
  \begin{equation}
    \label{equation:relative-ext}
    \mathcal{E}xt^j_{p_{\hilbtwo}}(\mathcal{I},\mathcal{I}) \cong
    \begin{cases}
      \mathcal{O}_{\hilbtwo} & \text{ if $j=0$}, \\
      q_*\left( \bigwedge^j\mathcal{N}_{Z/X \times \hilbtwo} \right) & \text{ if $j=1, \ldots, d-1$}, \\
      0 & \text{ if $j<0$ or $j \geq d$}
    \end{cases}
  \end{equation}
  where $\mathcal{N}_{Z/X \times \hilbtwo}$ denotes the normal bundle of the universal subscheme~$Z\cong \Bl_\Delta(X \times X)$ in $X \times \hilbtwo$.
\end{theorem}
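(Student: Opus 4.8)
The plan is to first compute the \emph{absolute} sheaf-level self-extensions $\sheafExt^j(\mathcal{I},\mathcal{I})=\mathcal{H}^j\RsheafHom(\mathcal{I},\mathcal{I})$ on~$X\times\hilbtwo$, and only afterwards push forward along~$p_{\hilbtwo}$. The point of separating these steps is that the universal subscheme~$Z\cong\Bl_\Delta(X\times X)$ is a \emph{smooth} subvariety of~$X\times\hilbtwo$ of codimension~$d$, so its sheaf-level self-extensions are governed by the normal bundle~$\mathcal{N}\coloneqq\mathcal{N}_{Z/X\times\hilbtwo}$. Concretely, I will invoke the two classical identities for a smooth subvariety of codimension~$d$, namely $\RsheafHom(\mathcal{O}_Z,\mathcal{O}_{X\times\hilbtwo})\cong i_*\det\mathcal{N}[-d]$ and $\sheafExt^j(\mathcal{O}_Z,\mathcal{O}_Z)\cong i_*\bigwedge^j\mathcal{N}$ for $0\le j\le d$, both obtained locally from the Koszul resolution. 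For the later reduction I record that $p_{\hilbtwo}\circ i=q$, that $q$ is \emph{finite} (the degree-two quotient map), and that $\mathcal{O}_X$ is exceptional.

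Applying $\RsheafHom(-,\mathcal{I})$ to the universal sequence \eqref{equation:universal} gives the distinguished triangle
\begin{equation}
  \RsheafHom(\mathcal{O}_Z,\mathcal{I})\to\mathcal{I}\to\RsheafHom(\mathcal{I},\mathcal{I})\xrightarrow{+1},
\end{equation}
and applying $\RsheafHom(\mathcal{O}_Z,-)$ to \eqref{equation:universal} gives
\begin{equation}
  \RsheafHom(\mathcal{O}_Z,\mathcal{I})\to\RsheafHom(\mathcal{O}_Z,\mathcal{O}_{X\times\hilbtwo})\to\RsheafHom(\mathcal{O}_Z,\mathcal{O}_Z)\xrightarrow{+1}.
\end{equation}
From the long exact sequence of the second triangle and the two classical identities I read off that $\mathcal{H}^m\RsheafHom(\mathcal{O}_Z,\mathcal{I})\cong i_*\bigwedge^{m-1}\mathcal{N}$ for $1\le m\le d$ and vanishes otherwise. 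Feeding this into the first triangle, whose middle term~$\mathcal{I}$ is concentrated in degree~$0$, then yields
\begin{equation}
  \sheafExt^j(\mathcal{I},\mathcal{I})\cong
  \begin{cases}
    \mathcal{O}_{X\times\hilbtwo} & j=0,\\
    i_*\bigwedge^j\mathcal{N} & 1\le j\le d-1,\\
    0 & \text{otherwise},
  \end{cases}
\end{equation}
where the $j=0$ term recovers $\sheafHom(\mathcal{I},\mathcal{I})=\mathcal{O}_{X\times\hilbtwo}$ (as~$Z$ has codimension~$\geq 2$), the universal sequence itself realising the extension of~$\mathcal{O}_Z$ by~$\mathcal{I}$ produced by the long exact sequence.

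The step I expect to require the most care is the behaviour in top cohomological degree: both $\RsheafHom(\mathcal{O}_Z,\mathcal{O}_{X\times\hilbtwo})$ and $\RsheafHom(\mathcal{O}_Z,\mathcal{O}_Z)$ contribute a copy of $\det\mathcal{N}=\bigwedge^d\mathcal{N}$ in degree~$d$, and the crucial claim is that the map between these two copies induced by $\mathcal{O}_{X\times\hilbtwo}\twoheadrightarrow\mathcal{O}_Z$ is an \emph{isomorphism}. This is exactly what forces $\sheafExt^j(\mathcal{I},\mathcal{I})=0$ for $j\ge d$ and pins down the top non-vanishing sheaf $\sheafExt^{d-1}(\mathcal{I},\mathcal{I})\cong i_*\bigwedge^{d-1}\mathcal{N}$; I would verify it locally on the Koszul resolution, where both top $\sheafExt$-sheaves are canonically identified with $\det\mathcal{N}$ and the induced map becomes the identity.

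Finally I pass to the relative $\sheafExt$-sheaves through the Grothendieck spectral sequence
\begin{equation}
  \mathrm{E}_2^{a,b}=\RR^a p_{\hilbtwo,*}\,\sheafExt^b(\mathcal{I},\mathcal{I})\Rightarrow\sheafExt^{a+b}_{p_{\hilbtwo}}(\mathcal{I},\mathcal{I}).
\end{equation}
For $b\ge 1$ the sheaf $\sheafExt^b(\mathcal{I},\mathcal{I})=i_*\bigwedge^b\mathcal{N}$ is supported on~$Z$, and since $p_{\hilbtwo}\circ i=q$ is finite we obtain $\RR^a p_{\hilbtwo,*}(i_*\bigwedge^b\mathcal{N})=\RR^a q_*\bigwedge^b\mathcal{N}=0$ for $a\ge 1$, leaving only $\mathrm{E}_2^{0,b}=q_*\bigwedge^b\mathcal{N}$. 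For $b=0$, the K\"unneth formula together with the exceptionality of~$\mathcal{O}_X$ gives $\RRR p_{\hilbtwo,*}\mathcal{O}_{X\times\hilbtwo}\cong\mathcal{O}_{\hilbtwo}$, so that $\mathrm{E}_2^{0,0}=\mathcal{O}_{\hilbtwo}$ and $\mathrm{E}_2^{a,0}=0$ for $a\ge 1$. Thus the spectral sequence is concentrated in the single row $a=0$, degenerates immediately, and produces exactly \eqref{equation:relative-ext}.
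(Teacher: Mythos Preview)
Your argument is correct, and it takes a genuinely different route from the paper's proof. The paper applies~$\RRR p_{\hilbtwo,*}$ directly to the $3\times 3$~diagram obtained by tensoring the universal triangle with its dual, and then uses Grothendieck duality together with the decomposition~$q_*\mathcal{O}_Z\cong\mathcal{O}_{\hilbtwo}\oplus\mathcal{O}_{\hilbtwo}(-\delta)$ to identify the various pushforwards and the maps between them on~$\hilbtwo$; the relative $\sheafExt$-sheaves are read off from the resulting triangles. You instead first compute the \emph{absolute} sheaves~$\sheafExt^j(\mathcal{I},\mathcal{I})$ on~$X\times\hilbtwo$ via the Koszul resolution of the smooth codimension-$d$ subvariety~$Z$, obtaining $\sheafExt^j(\mathcal{I},\mathcal{I})\cong i_*\bigwedge^j\mathcal{N}$ for~$1\le j\le d-1$, and only then push forward, where the finiteness of~$q=p_{\hilbtwo}\circ i$ and the exceptionality of~$\mathcal{O}_X$ collapse the hypercohomology spectral sequence to a single row. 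Your approach is arguably more transparent: it cleanly separates the local input (Koszul for a regular immersion) from the global one (the degree-two cover~$q$), and avoids having to identify specific morphisms between derived pushforwards. The paper's approach, by contrast, works entirely at the level of complexes on~$\hilbtwo$ and keeps better track of the maps in the ambient $3\times3$ diagram, which is closer in spirit to the Fourier--Mukai computations elsewhere in the paper; but for the purpose of this theorem alone that extra bookkeeping is not needed. The one point you flag as delicate---that the top-degree map $\sheafExt^d(\mathcal{O}_Z,\mathcal{O}_{X\times\hilbtwo})\to\sheafExt^d(\mathcal{O}_Z,\mathcal{O}_Z)$ is an isomorphism---is indeed the crux, and your local Koszul verification is the standard and correct way to see it.
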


\begin{proof}
  Taking the derived tensor product of the distinguished triangle associated to universal short exact sequence \eqref{equation:universal} with its dual distinguished triangle
  \begin{equation}
    (i_*\mathcal{O}_Z)^\vee\to \mathcal{O}_{X\times \hilbtwo}\to \mathcal{I}^\vee\xrightarrow{+1},
  \end{equation}
  we get the following diagram with all rows and columns distinguished triangles:
  \begin{equation}
    \label{equation:diag:TensorProduct}
    \begin{tikzcd}
      (i_*\mathcal{O}_Z)^\vee\otimes^\LLL\mathcal{I} \arrow[r] \arrow[d] & (i_*\mathcal{O}_Z)^\vee \arrow[r] \arrow[d] & (i_*\mathcal{O}_Z)^\vee\otimes^\LLL(i_*\mathcal{O}_Z) \arrow[r, "+1"] \arrow[d] & {} \\
      \mathcal{I} \arrow[r] \arrow[d] & \mathcal{O}_{X\times \hilbtwo} \arrow[r] \arrow[d] & i_*\mathcal{O}_Z \arrow[r, "+1"] \arrow[d] & {} \\
      \mathcal{I}^\vee\otimes^\LLL\mathcal{I} \arrow[r] \arrow[d, "+1"] & \mathcal{I}^\vee \arrow[r] \arrow[d, "+1"] & \mathcal{I}^\vee\otimes^\LLL(i_*\mathcal{O}_Z) \arrow[r, "+1"] \arrow[d, "+1"] & {} \\
      {} & {} & {}
    \end{tikzcd}.
  \end{equation}

  We can compute~$(i_*\mathcal{O}_Z)^\vee$ as in \eqref{equation:derived-dual-OZ}. Applying the derived pushforward functor $\RRR p_{\hilbtwo,*}$ to the diagram \eqref{equation:diag:TensorProduct}, together with the above computation of $(i_*\mathcal{O}_Z)^\vee$, we have
  \begin{equation}
    \label{equation:diag:PushTensorProduct}
    \begin{tikzcd}
      \RRR p_{\hilbtwo,*}\left((i_*\mathcal{O}_Z)^\vee\otimes^\LLL\mathcal{I} \right) \arrow[r] \arrow[d] & \RRR p_{\hilbtwo,*}\left( i_*\omega_i[-d] \right) \arrow[r] \arrow[d] & \RRR p_{\hilbtwo,*}\left( (i_*\mathcal{O}_Z)^\vee\otimes^\LLL i_*\mathcal{O}_Z \right) \arrow[r, "+1"] \arrow[d] & {} \\
      \RRR p_{\hilbtwo,*}\mathcal{I} \arrow[r] \arrow[d] & \RRR p_{\hilbtwo,*}\mathcal{O}_{X\times \hilbtwo} \arrow[r] \arrow[d] & \RRR p_{\hilbtwo,*}\left( i_*(\mathcal{O}_Z) \right) \arrow[d] \arrow[r, "+1"] & {} \\
      \RRR p_{\hilbtwo,*}\left(\mathcal{I}^\vee\otimes^\LLL\mathcal{I}\right) \arrow[r] \arrow[d, "+1"] & \RRR p_{\hilbtwo,*}(\mathcal{I}^\vee) \arrow[r] \arrow[d, "+1"] & \RRR p_{\hilbtwo,*}\left(\mathcal{I}^\vee\otimes^\LLL i_*\mathcal{O}_Z \right) \arrow[d, "+1"] \arrow[r, "+1"] & {} \\
      {} & {} & {}
    \end{tikzcd}
  \end{equation}

  In the following lemmas we will compute the square in the upper right more explicitly.

  \begin{lemma}
    \label{lemma:identification-morphism-1}
    We have isomorphisms
    \begin{equation}
      \RRR p_{\hilbtwo,*}\left( i_*\omega_i[-d] \right)\cong q_*\omega_i[-d],
    \end{equation}
    and
    \begin{equation}
      \RRR p_{\hilbtwo,*}\left( (i_*\mathcal{O}_Z)^\vee\otimes^\LLL i_*\mathcal{O}_Z \right)\cong q_*\left( \LLL i^*\circ i_*\mathcal{O}_Z\otimes \omega_i \right)[-d].
    \end{equation}
    Moreover, via these isomorphisms, the second morphism in the first horizontal triangle of \eqref{equation:diag:PushTensorProduct} is identified with the morphism induced by~$\mathcal{O}_Z=\LLL i^*\mathcal{O}_{X\times\hilbtwo}\to \LLL i^*\circ i_*(\mathcal{O}_Z)$, which comes from the restriction map~$\mathcal{O}_{X\times\hilbtwo}\to i_*\mathcal{O}_Z$.
  \end{lemma}

  \begin{proof}
    The first isomorphism is immediate from~$p_{\hilbtwo}\circ i=q$ in \eqref{equation:notation}.

    The second isomorphism follows from Grothendieck duality, the tensor-Hom adjunction and the commutative diagram \eqref{equation:notation}, as
    \begin{equation}
      \begin{aligned}
        \RRR p_{\hilbtwo,*}\left( (i_*\mathcal{O}_Z)^\vee\otimes^\LLL i_*\mathcal{O}_Z \right)
        &\cong \RRR p_{\hilbtwo,*}\RsheafHom\left(i_*\mathcal{O}_Z, i_*\mathcal{O}_Z\right) \\
        &\cong \RRR p_{\hilbtwo,*}\left( i_*\circ i^!\circ i_*(\mathcal{O}_Z) \right) \\
        &\cong q_*\left( \LLL i^*\circ i_*\mathcal{O}_Z\otimes \omega_i \right)[-d]
      \end{aligned}
    \end{equation}
  \end{proof}

  This lemma allows us to compute the cohomology of the object in the upper left corner of \eqref{equation:diag:PushTensorProduct}. Since~$i\colon Z \hookrightarrow X \times \hilbtwo$ is a closed immersion of smooth varieties, we know by \cite[proposition~11.8]{MR2244106} that~$\LLL i^*\circ i_*(\mathcal{O}_Z)$ has cohomology sheaves
  \begin{equation}
    \mathcal{H}^{-j}\left( \LLL i^*\circ i_*(\mathcal{O}_Z) \right)=\bigwedge^j \mathcal{N}_{Z/X\times\hilbtwo}^\vee,
  \end{equation}
  So by the identification of the morphism in \cref{lemma:identification-morphism-1} we can compute the cohomology sheaves of the object in the upper left corner as
  \begin{equation}
    \label{equation:OZI}
    \sheafExt^{j+1}_{p_{\hilbtwo}}\left(i_*\mathcal{O}_Z, \mathcal{I}\right)\cong
    \begin{cases}
      q_*\left( \bigwedge^{d-j}\mathcal{N}_{Z/X\times\hilbtwo} \right) & \text{ if $0\leq j\leq d-1$} \\
      0 & \text{ otherwise }
    \end{cases}
  \end{equation}
  because~$\omega_i\cong\det\mathcal{N}_{Z/X\times\hilbtwo}$, so~$\bigwedge^j\mathcal{N}_{Z/X\times\hilbtwo}^\vee\otimes \omega_i\cong \bigwedge^{d-j}\mathcal{N}_{Z/X\times\hilbtwo}$.

  Moreover we note that~$\sheafExt^{j+1}_{p_{\hilbtwo}}(i_*\mathcal{O}_Z, \mathcal{I})=0$ for~$j\geq d$, because the relative dimension of~$p_{\hilbtwo}$ is~$d$.

  The next lemma computes the second horizontal morphism in the upper right square of \eqref{equation:diag:PushTensorProduct} more explicitly.
  \begin{lemma}
    We have the isomorphisms
    \begin{equation}
      \RRR p_{\hilbtwo,*}\mathcal{O}_{X\times \hilbtwo}\cong \mathcal{O}_{\hilbtwo}.
    \end{equation}
    and
    \begin{equation}
      \RRR p_{\hilbtwo,*}\circ i_*(\mathcal{O}_Z)\cong q_*\mathcal{O}_Z\cong \mathcal{O}_{\hilbtwo}\oplus \mathcal{O}_{\hilbtwo}(-\delta),
    \end{equation}
    Moreover, via these isomorphisms, the second morphism in the second horizontal triangle of \eqref{equation:diag:PushTensorProduct} is the inclusion morphism
    \begin{equation}
      (\identity,0)\colon\mathcal{O}_{\hilbtwo}\hookrightarrow\mathcal{O}_{\hilbtwo}\oplus\mathcal{O}_{\hilbtwo}(-\delta).
    \end{equation}
  \end{lemma}

  \begin{proof}
    The first isomorphism follows from the isomorphism
    \begin{equation}
      \RRR p_{\hilbtwo,*}\mathcal{O}_{X\times \hilbtwo}\cong\HH^\bullet(X,\mathcal{O}_X) \otimes \mathcal{O}_{\hilbtwo}\cong \mathcal{O}_{\hilbtwo},
    \end{equation}
    whilst the second isomorphism follows from the commutativity of the diagram \eqref{equation:notation}.

    As for the last assertion, again by the commutativity of \eqref{equation:notation}, we see that the morphism~$\mathcal{O}_{\hilbtwo}\to q_*\mathcal{O}_Z$ coincides with the natural one induced by the morphism~$q$. Since the splitting~$q_*\mathcal{O}_Z\cong \mathcal{O}_{\hilbtwo}\oplus \mathcal{O}_{\hilbtwo}(-\delta)$ comes from the trace map~$q_*\mathcal{O}_Z\to \mathcal{O}_{\hilbtwo}$, after rescaling by~$\frac{1}{2}$, the structural morphism $\mathcal{O}_{\hilbtwo}\to q_*\mathcal{O}_Z$ is identified with the inclusion~$(\identity, 0)$.
  \end{proof}

  Therefore we can compute the first object in the triangle on the second row as
  \begin{equation}
    \label{equation:OI}
    \RRR p_{\hilbtwo,*}\mathcal{I}\cong \mathcal{O}_{\hilbtwo}(-\delta)[-1].
  \end{equation}

  Combining \eqref{equation:OZI} and \eqref{equation:OI} together with the distinguished triangle in the first column of \eqref{equation:diag:PushTensorProduct}, we get the desired formula \eqref{equation:relative-ext} except for the case~$j=0$. In that case, we have by the above computation that~$\RsheafHom_{p_{\hilbtwo}}(\mathcal{I}, \mathcal{I})$ is the kernel of the natural projection map
  \begin{equation}
    q_*\mathcal{O}_{\Bl_\Delta(X\times X)}\cong\mathcal{O}_{\hilbtwo}\oplus \mathcal{O}_{\hilbtwo}(-\delta)\xrightarrow{(0, \identity)} \mathcal{O}_{\hilbtwo}(-\delta),
  \end{equation}
  which is~$\mathcal{O}_{\hilbtwo}$.
\end{proof}

\paragraph{First relative Ext is tangent bundle}
The following result computes one of the sheaves in \eqref{equation:relative-ext}. Part of it is a generalisation of \cite[theorem~10.2.1]{MR2665168}, which treats the case~$d=2$. In this new generality we have not found a reference for it, so we prove it here.

\begin{proposition}
  \label{proposition:iso-tangent}
  There is an isomorphism of sheaves
  \begin{equation}
    \sheafExt^1_{p_{\hilbtwo}}(\mathcal{I},\mathcal{I})\cong q_*(\mathcal{N}_{Z/X\times\hilbtwo})\cong\tangent_{\hilbtwo}.
  \end{equation}
\end{proposition}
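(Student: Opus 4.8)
Since \cref{theorem:rel-ext} (the case $j=1$) already supplies the isomorphism $\sheafExt^1_{p_{\hilbtwo}}(\mathcal{I},\mathcal{I})\cong q_*(\mathcal{N}_{Z/X\times\hilbtwo})$, the only remaining content is the second isomorphism $q_*(\mathcal{N}_{Z/X\times\hilbtwo})\cong\tangent_{\hilbtwo}$. I would begin by checking that both sides are locally free of the same rank: $\tangent_{\hilbtwo}$ is a rank-$2d$ bundle since $\hilbtwo$ is smooth of dimension $2d$, while $\mathcal{N}_{Z/X\times\hilbtwo}$ has rank $\dim(X\times\hilbtwo)-\dim Z=d$, and $q$ is finite flat of degree $2$ (miracle flatness, with $q_*\mathcal{O}_Z$ locally free of rank $2$), so $q_*(\mathcal{N}_{Z/X\times\hilbtwo})$ is again locally free of rank $2d$. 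It therefore suffices to produce a natural morphism between the two and to prove that it is an isomorphism.

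The natural map comes from the normal bundle sequence on $Z$,
\[
0\to\tangent_Z\to i^*\tangent_{X\times\hilbtwo}\to\mathcal{N}_{Z/X\times\hilbtwo}\to 0,
\]
together with the product decomposition $\tangent_{X\times\hilbtwo}\cong p_X^*\tangent_X\oplus p_{\hilbtwo}^*\tangent_{\hilbtwo}$. Restricting along $i$ and using $p_{\hilbtwo}\circ i=q$ from \eqref{equation:notation}, the second summand becomes $q^*\tangent_{\hilbtwo}$, which I compose with the surjection onto the normal bundle to obtain $\alpha\colon q^*\tangent_{\hilbtwo}\to\mathcal{N}_{Z/X\times\hilbtwo}$. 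Applying $q_*$ and precomposing with the unit $\tangent_{\hilbtwo}\to q_*q^*\tangent_{\hilbtwo}$ of the adjunction yields the desired morphism $\beta\colon\tangent_{\hilbtwo}\to q_*(\mathcal{N}_{Z/X\times\hilbtwo})$. Since both sheaves are locally free of rank $2d$, it is enough by Nakayama's lemma to check that $\beta$ is surjective on fibres.

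Over the open locus $U\subseteq\hilbtwo$ parametrising reduced subschemes $\{x,y\}$ with $x\neq y$ the morphism $q$ is an étale double cover, the tangent space is $\tangent_x X\oplus\tangent_y X$, and a direct computation shows that $\beta$ identifies this with the two normal-bundle fibres $\mathcal{N}|_{(x,\xi)}\oplus\mathcal{N}|_{(y,\xi)}$; hence $\beta$ is an isomorphism over $U$. The main obstacle is the behaviour along the boundary divisor $D\cong\mathbb{P}(\tangent_X)$ of non-reduced subschemes, precisely where $q$ ramifies along $E$ and the fibre $q^{-1}(\xi)$ degenerates to a length-$2$ non-reduced point; there the fibre of $q_*(\mathcal{N}_{Z/X\times\hilbtwo})$ is $\mathcal{N}_{Z/X\times\hilbtwo}\otimes\mathcal{O}_{Z_\xi}$, and a hands-on verification that $\beta$ stays bijective is delicate.

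To finish cleanly and bypass that ramification computation, I would identify $\beta$ with the Kodaira--Spencer map of the universal family. Since $i$ is a regular closed immersion with $\mathcal{O}_Z=\mathcal{O}_{X\times\hilbtwo}/\mathcal{I}$, every homomorphism $\mathcal{I}\to\mathcal{O}_Z$ kills $\mathcal{I}^2$, so that $\sheafHom_{X\times\hilbtwo}(\mathcal{I},\mathcal{O}_Z)\cong i_*(\mathcal{N}_{Z/X\times\hilbtwo})$ and hence $\sheafExt^0_{p_{\hilbtwo}}(\mathcal{I},\mathcal{O}_Z)\cong q_*(\mathcal{N}_{Z/X\times\hilbtwo})$. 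Because $\hilbtwo$ is a smooth fine moduli space, the Kodaira--Spencer map of its universal subscheme is an isomorphism $\tangent_{\hilbtwo}\cong\sheafExt^0_{p_{\hilbtwo}}(\mathcal{I},\mathcal{O}_Z)$, whose fibre at every point---including the non-reduced ones---is the standard identification $\tangent_\xi\hilbtwo\cong\Hom(\mathcal{I}_\xi,\mathcal{O}_\xi)$. Establishing this sheaf-level statement in arbitrary dimension $d$, where no reference seems to be available, is the real point to nail down; once it is in place the three isomorphisms chain together to give the proposition.
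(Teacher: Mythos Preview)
Your approach is correct but takes a different route from the paper's. Rather than proving $q_*(\mathcal{N}_{Z/X\times\hilbtwo})\cong\tangent_{\hilbtwo}$ directly, the paper establishes $\sheafExt^1_{p_{\hilbtwo}}(\mathcal{I},\mathcal{I})\cong\tangent_{\hilbtwo}$ by identifying $\hilbtwo$ with the fine moduli space $\moduli(X,v)$ of stable sheaves with Mukai vector $v=\sqrt{\td(\tangent_X)}-(0,\ldots,0,2)$ (using $\Pic^0(X)=0$, a consequence of $\mathcal{O}_X$ being exceptional, to check that every such stable sheaf is the ideal sheaf of a length-$2$ subscheme) and then invoking the standard identification of the tangent bundle of a fine moduli space of sheaves with $\sheafExt^1$ of the universal family. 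Your route through the Hilbert-scheme Kodaira--Spencer map $\tangent_{\hilbtwo}\cong p_{\hilbtwo,*}\sheafHom(\mathcal{I},\mathcal{O}_Z)$ together with $\sheafHom(\mathcal{I},\mathcal{O}_Z)\cong i_*\mathcal{N}_{Z/X\times\hilbtwo}$ is equally valid and arguably more elementary: it stays entirely within Hilbert-scheme deformation theory, avoids the stable-sheaf detour, and in fact proves $q_*\mathcal{N}_{Z/X\times\hilbtwo}\cong\tangent_{\hilbtwo}$ without appealing to $\Pic^0(X)=0$. The sheaf-level Kodaira--Spencer isomorphism you flag as the remaining point is the relative form of Grothendieck's tangent-space description and follows from your fibrewise check once both sides are known to be locally free of the same rank, which you already verified; so there is no genuine gap. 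Your initial geometric construction of $\beta$ via the normal-bundle sequence becomes redundant once you adopt the Kodaira--Spencer viewpoint and could simply be dropped.
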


\begin{proof}
  The first isomorphism is proved in \cref{theorem:rel-ext}. Let us show that
  \begin{equation}
    \sheafExt^1_{p_{\hilbtwo}}(\mathcal{I},\mathcal{I})\cong\tangent_{\hilbtwo}.
  \end{equation}
  We claim that the Hilbert square~$X^{[2]}$ is isomorphic to~$\moduli(X, v)$, the (fine) moduli space of stable sheaves on~$X$ with Mukai vector
  \begin{equation}
    \label{eqn:MukaiVector}
    v\coloneqq\sqrt{\td(\tangent_X)}-(0,\ldots, 0, 2).
  \end{equation}
  Firstly, for any length-2 subscheme~$\xi$ of $X$, its ideal sheaf $\mathcal{I}_{\xi}$ is a rank~1 torsion-free (hence stable) sheaf with Mukai vector
  \begin{equation}
    v(\mathcal{I}_{\xi})=\left(1-\ch(\mathcal{O}_{\xi})\right)\cdot \sqrt{\td(\tangent_X)}=\sqrt{\td(\tangent_X)}-(0,\ldots, 0, 2)\cdot\sqrt{\td(\tangent_X)}=v.
  \end{equation}
  Conversely, for any stable sheaf~$E\in\moduli(X,v)$ with Mukai vector~$v$, its Chern character is~$\ch(E)=(1,0,\ldots, 0, -2)$. In particular, $E$ is of rank 1 and torsion-free. Moreover, its reflexive hull $E^{\vee\vee}$ is a line bundle which is of degree~0, as~$\mathrm{c}_1(E)=0$, hence trivial since by assumption~$\Pic^0(X)=0$. Therefore, $E$ is the ideal sheaf of a closed subscheme, denoted by~$\xi$. As~$\ch(\mathcal{O}_{\xi})=1-\ch(\mathcal{I}_{\xi})=(0,\ldots, 0, 2)$, the subscheme $\xi$ is 0-dimensional and of length~2. To summarize, we established an isomorphism from~$\hilbtwo$ to~$\moduli(X, v)$ by sending~$\xi$ to~$\mathcal{I}_{\xi}$.

  Now the desired isomorphism follows from the natural identification of~$\tangent_{\moduli(X,v)}$ and~$\sheafExt^1_{p_M}(\mathcal{I},\mathcal{I})$, where~$p_{\moduli(X,v})\colon\moduli(X,v)\times X\to \moduli(X,v)$ is the natural projection and~$\mathcal{I}$ is identified with the universal sheaf on~$\moduli(X,v)\times X$.
\end{proof}

\begin{remark}
  We do not have a modular interpretation for~$\sheafExt_{p_{\hilbtwo}}^j(\mathcal{I},\mathcal{I})$ with~$j\geq2$. When~$X$ is a surface we recall the description for~$j=2$ in \eqref{equation:huybrechts-lehn}.
\end{remark}

\subsection{Degeneration of the spectral sequence}
\label{subsection:degeneration}
In this section we finish the proof of \cref{theorem:spectral-sequence}. At this point we are left to show that the spectral sequence degenerates on the second page.

We start with a lemma from commutative algebra.
\begin{lemma}
  \label{lemma:Support}
  Let~$Y$ be a Noetherian integral scheme and~$i\colon D\hookrightarrow Y$ be the closed immersion of a hypersurface. Suppose there is a short exact sequence of coherent sheaves on $Y$
  \begin{equation}
    0\to\mathcal{F}_1\xrightarrow{f}\mathcal{F}_2\to i_*\mathcal{L}\to 0,
  \end{equation}
  where
  \begin{enumerate}
    \item $\mathcal{F}_1$ and~$\mathcal{F}_2$ are locally free sheaves on~$Y$, of rank~$r$;
    \item $\mathcal{L}$ is an invertible sheaf on~$D$.
  \end{enumerate}
  Then for every~$k\geq 0$ there exists a locally free sheaf~$\mathcal{E}$ on~$D$ of rank~${r}\choose{k-1}$, such that we have a short exact sequence
  \begin{equation}
    0\to\bigwedge^k\mathcal{F}_1\xrightarrow{\bigwedge^kf}\bigwedge^k\mathcal{F}_2\to i_*\mathcal{E}\to 0.
  \end{equation}
\end{lemma}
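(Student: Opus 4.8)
The plan is to reduce the statement to a purely local computation with matrices and then globalise, since injectivity, local freeness, and the identification of a sheaf as a pushforward from $D$ are all local properties. Away from $D$ the map $f$ is an isomorphism (its cokernel $i_*\mathcal{L}$ is supported on $D$), so there $\bigwedge^kf$ is an isomorphism too and there is nothing to check. Thus I work in a stalk $A=\mathcal{O}_{Y,y}$ at a point $y\in D$: let $t\in A$ be a local equation for $D$, which is a non-zero-divisor because $Y$ is integral, and trivialise $\mathcal{F}_1,\mathcal{F}_2$ so that $f$ becomes an $r\times r$ matrix $M$ over $A$ with $\operatorname{coker}M\cong A/(t)$.

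The heart of the argument is to bring $M$ into the normal form $\operatorname{diag}(t,1,\dots,1)$ up to the $\mathrm{GL}_r(A)\times\mathrm{GL}_r(A)$ action of changing bases in source and target. First, reducing modulo the maximal ideal $\mathfrak m$ and using that $\operatorname{coker}M\otimes_A k\cong A/(t,\mathfrak m)=k$ is one-dimensional, Nakayama's lemma shows $\operatorname{coker}M$ is cyclic, so $M\bmod\mathfrak m$ has rank $r-1$. Hence some $(r-1)\times(r-1)$ minor of $M$ has unit determinant; after permuting basis vectors I may assume the lower-right $(r-1)\times(r-1)$ block is invertible over the local ring $A$, and a Schur-complement block reduction then makes $M$ equivalent to $\operatorname{diag}(s,1,\dots,1)$ for a single element $s\in A$. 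Finally, comparing cokernels gives $A/(s)\cong A/(t)$, so the two cyclic modules have the same annihilator, $(s)=(t)$; as $t$ is a non-zero-divisor this forces $s=ut$ with $u\in A^\times$, and absorbing $u$ yields $M=\operatorname{diag}(t,1,\dots,1)$.

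With this normal form the exterior power is transparent. In the induced bases $e_{i_1}\wedge\dots\wedge e_{i_k}$ on $\bigwedge^k\mathcal{F}_1$ and $\bigwedge^k\mathcal{F}_2$, the map $\bigwedge^kf$ is diagonal: it multiplies a basis vector by $t$ exactly when the distinguished index $1$ occurs among $i_1,\dots,i_k$, and by $1$ otherwise. In particular $\bigwedge^kf$ is injective (each diagonal entry is a non-zero-divisor), and its cokernel is $(A/(t))^{N}$, where $N$ is the number of multi-indices containing $1$, namely $N=\binom{r-1}{k-1}$. Thus the cokernel is annihilated by $t$, so it is the pushforward of an $\mathcal{O}_D$-module, and locally it is free of that rank; as local freeness is local, globally $\operatorname{coker}\bigl(\bigwedge^kf\bigr)\cong i_*\mathcal{E}$ with $\mathcal{E}$ locally free on $D$ of rank $\binom{r-1}{k-1}$, giving the asserted short exact sequence.

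The main obstacle is the local normal form step: over a general Noetherian local ring, which is not a principal ideal domain and for which elementary-divisor theory is unavailable, one must genuinely exploit that the cokernel is the cyclic module $A/(t)$, combining Nakayama with the Schur complement to diagonalise $M$. Once the form $\operatorname{diag}(t,1,\dots,1)$ is secured, everything else — injectivity of $\bigwedge^kf$, the $\mathcal{O}_D$-module structure and local freeness of the cokernel, and the rank count — is routine. (I note that the count of multi-indices containing the distinguished index is $\binom{r-1}{k-1}$, which is what I would record as the rank of $\mathcal{E}$.)
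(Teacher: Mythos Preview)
Your proof is correct and follows essentially the same approach as the paper's: both localise, bring the matrix to the normal form $\operatorname{diag}(t,1,\dots,1)$ (the paper by invoking a structure theorem from Eisenbud to split off the $A$-summand on which $f$ acts by $a$, you by an explicit Nakayama and Schur-complement argument), and then read off the cokernel of $\bigwedge^k f$ directly from the diagonal description. Your observation that the rank of $\mathcal{E}$ comes out as $\binom{r-1}{k-1}$ is also correct and is exactly what the paper's own computation yields, so the $\binom{r}{k-1}$ in the lemma statement is a typo.
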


\begin{proof}
  The lemma being a local assertion, we can assume that~$Y=\Spec A$ for some local integral domain~$A$, and~$D=\Spec A/{(a)}$ for some~$a\neq 0$, and~$i$ is induced by the projection~$A\twoheadrightarrow A/{(a)}$. We identify~$\mathcal{F}_1$ and~$\mathcal{F}_{2}$ (resp.~$\mathcal{L}$) with the corresponding free~$A$\dash modules~$F_1$ and~$F_2$ (resp.~$A/{(a)}$ itself). Then we have a short exact sequence of~$A$\dash modules
  \begin{equation}
    0\to F_1\xrightarrow{f}F_2\to A/{(a)}\to 0,
  \end{equation}
  which is a free resolution of~$A/{(a)}$.

  Using~\cite[theorem 20.2]{MR1322960} we have that~$F_1\cong F_1'\oplus A$ and~$F_2\cong F_2'\oplus A$, such that~$f=(f', \cdot a)$ for some free~$A$\dash modules~$F_1'$, $F_2'$ and an isomorphism~$f'$ between them. Therefore the morphism
  \begin{equation}
    \bigwedge^kF_1\xrightarrow{\bigwedge^kf}\bigwedge^kF_{2}
  \end{equation}
  can be written under the natural identifications as
  \begin{equation}
    \bigwedge^kF_1'\oplus\bigwedge^{k-1}F_1'\otimes_A A\xrightarrow{(\bigwedge^kf', \bigwedge^{k-1}f'\otimes a)}\bigwedge^kF_2'\oplus\bigwedge^{k-1}F_2'\otimes_A A.
  \end{equation}
  As~$\bigwedge^kf'$ and~$\bigwedge^{k-1}f'$ are isomorphisms, we see that the cokernel is a free~$A/{(a)}$-module of rank~$r \choose{k-1}$.
\end{proof}

The following lemma is crucial to the proof. In \cref{lemma:Z-to-X,lemma:wedge-vanishing} we will give an interpretation of the sheaf cohomology (on~$Z$) of the outer terms in the sequence \eqref{equation:wedge-seq} using sheaf cohomology of polyvector fields on~$X$.

\begin{lemma}
  For any~$k \geq 1$, there is a short exact sequence
  \begin{equation}
    \label{equation:wedge-seq}
    0 \to \pi^*\circ p_1^*\left(\bigwedge^k\tangent_X\right) \to \bigwedge^k\mathcal{N}_{Z/X\times\hilbtwo} \to j_*\left(\bigwedge^{k-1}\tangent_p(-k-1)\right) \to 0.
  \end{equation}
\end{lemma}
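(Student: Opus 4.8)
The plan is to establish the case $k=1$ by a direct normal-bundle computation, and then to deduce all $k$ from it by applying \cref{lemma:Support} and identifying the resulting cokernel on $E$.

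First I would prove the $k=1$ instance, namely the exact sequence
\begin{equation}
  0 \to \pi^*p_1^*\tangent_X \xrightarrow{\phi} \mathcal{N}_{Z/X\times\hilbtwo} \to j_*\mathcal{O}_E(2E) \to 0,
\end{equation}
where $\mathcal{O}_E(2E)=\mathcal{O}_E(-2)$ since $\mathcal{O}_E(E)=\mathcal{N}_{E/Z}=\mathcal{O}_E(-1)$, matching \eqref{equation:wedge-seq} for $k=1$ (as $\bigwedge^0\tangent_p(-2)=\mathcal{O}_E(-2)$). Because $X\times\hilbtwo$ is a product, the relative tangent bundle of $p_{\hilbtwo}$ is the subbundle $p_X^*\tangent_X\subseteq\tangent_{X\times\hilbtwo}$; restricting along $i$ and projecting to the normal bundle defines $\phi$, whose source is $i^*p_X^*\tangent_X=\pi^*p_1^*\tangent_X$ (using $p_X\circ i=p_1\circ\pi$ from \eqref{equation:notation}). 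Under the product splitting $\tangent_{X\times\hilbtwo}|_Z\cong\pi^*p_1^*\tangent_X\oplus q^*\tangent_{\hilbtwo}$, the differential of $i$ is $(\mathrm{d}(p_1\pi),\mathrm{d}q)$, so a short diagram chase identifies $\operatorname{coker}\phi$ with $\operatorname{coker}(\tangent_Z\xrightarrow{\mathrm{d}q}q^*\tangent_{\hilbtwo})$ and shows $\ker\phi=0$ as soon as $\mathrm{d}q$ is injective. The key input here is the exact sequence $0\to\tangent_Z\to q^*\tangent_{\hilbtwo}\to j_*\mathcal{O}_E(2E)\to0$ already used in \cref{section:fully-faithful}, which provides exactly the injectivity of $\mathrm{d}q$ and the identification $\operatorname{coker}\phi\cong j_*\mathcal{O}_E(-2)$.

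Next I would apply \cref{lemma:Support} to this $k=1$ sequence, with $Y=Z$ (integral, being the blowup of $X\times X$), hypersurface $D=E$, the rank-$d$ locally free sheaves $\mathcal{F}_1=\pi^*p_1^*\tangent_X$ and $\mathcal{F}_2=\mathcal{N}_{Z/X\times\hilbtwo}$, and $\mathcal{L}=\mathcal{O}_E(-2)$. This produces, for every $k\geq1$, a short exact sequence
\begin{equation}
  0 \to \pi^*p_1^*\bigwedge^k\tangent_X \to \bigwedge^k\mathcal{N}_{Z/X\times\hilbtwo} \to j_*\mathcal{E}_k \to 0
\end{equation}
with $\mathcal{E}_k$ locally free on $E$, using $\bigwedge^k\pi^*p_1^*\tangent_X=\pi^*p_1^*\bigwedge^k\tangent_X$. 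It then remains to identify $\mathcal{E}_k\cong\bigwedge^{k-1}\tangent_p(-k-1)$. For this I would restrict the $k=1$ sequence to $E$: applying $\LLL j^*$ yields a four-term exact sequence $0\to\ker(\phi|_E)\to p^*\tangent_X\xrightarrow{\phi|_E}\mathcal{N}_{Z/X\times\hilbtwo}|_E\to\mathcal{O}_E(-2)\to0$ in which $\ker(\phi|_E)=\mathcal{H}^{-1}(\LLL j^*j_*\mathcal{O}_E(-2))=\mathcal{O}_E(-2)\otimes\mathcal{O}_E(-E)=\mathcal{O}_E(-1)$. Geometrically, $\ker(\phi|_E)$ consists of the $p_{\hilbtwo}$-vertical tangent vectors along $E$ that are already tangent to $Z$; over a point $(x,[v])\in E=\mathbb{P}(\tangent_X)$ this is precisely the tautological direction $\langle v\rangle\subseteq\tangent_{X,x}$, so $\ker(\phi|_E)\hookrightarrow p^*\tangent_X$ is the tautological subbundle. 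Hence $\operatorname{im}(\phi|_E)\cong p^*\tangent_X/\mathcal{O}_E(-1)\cong Q$, the tautological quotient, and the relative Euler sequence on $E=\mathbb{P}(\tangent_X)$ gives $Q\cong\tangent_p(-1)$. Tracking the local splitting $\mathcal{F}_2\cong\mathcal{F}_2'\oplus L$ of \cref{lemma:Support} along $E$, with $\mathcal{F}_2'|_E\cong Q$ and $L|_E\cong\mathcal{O}_E(-2)$, the cokernel of $\bigwedge^k\phi$ becomes $\mathcal{E}_k\cong\bigwedge^{k-1}Q\otimes\mathcal{O}_E(-2)\cong\bigwedge^{k-1}\tangent_p(-k-1)$.

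The main obstacle is this final identification: \cref{lemma:Support} only delivers $\mathcal{E}_k$ as an abstract locally free sheaf of the correct rank on $E$, so the genuine work lies in pinning down the degeneration locus of $\phi$ along $E$ as the tautological subbundle of $p^*\tangent_X$ and in carefully bookkeeping the twists by $\mathcal{O}_E(-1)$ through the exterior-power splitting. Everything preceding it is either formal diagram chasing or a direct appeal to the normal-bundle sequence of the double cover $q$.
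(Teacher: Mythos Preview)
Your plan matches the paper's proof almost step for step: obtain the $k=1$ sequence by comparing the normal-bundle sequence of $i$ with $0\to\tangent_Z\to q^*\tangent_{\hilbtwo}\to j_*\mathcal{O}_E(2E)\to0$, apply \cref{lemma:Support} to get an abstract locally free cokernel $\mathcal{E}_k$ on $E$, and then pin down $\mathcal{E}_k$ by restricting to $E$ and recognising the tautological inclusion $\mathcal{O}_p(-1)\hookrightarrow p^*\tangent_X$.

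The one place to tighten is your final identification. The splitting $\mathcal{F}_2\cong\mathcal{F}_2'\oplus L$ in the proof of \cref{lemma:Support} is only \emph{local} on $Z$ and does not globalise, so there are no global sheaves $\mathcal{F}_2'|_E$ or $L|_E$ to speak of. You do not need them: you have already shown that $j^*\phi$ factors globally as $p^*\tangent_X\twoheadrightarrow Q\hookrightarrow j^*\mathcal{N}_{Z/X\times\hilbtwo}$ with $Q=\tangent_p(-1)$ a subbundle and line-bundle quotient $\mathcal{O}_E(-2)$. Taking the $k$th exterior power of that inclusion gives directly
\begin{equation}
\mathcal{E}_k=\operatorname{coker}\left(\bigwedge^k Q\hookrightarrow j^*\bigwedge^k\mathcal{N}_{Z/X\times\hilbtwo}\right)\cong\bigwedge^{k-1}Q\otimes\mathcal{O}_E(-2)\cong\bigwedge^{k-1}\tangent_p(-k-1),
\end{equation}
which is precisely the paper's computation via \eqref{equation:jwedgeN} and the comparison with \eqref{equation:DerivedRestriction}.
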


\begin{proof}
  Since~$\mathcal{N}_{X/X \times X}\cong \tangent_X$ and~$p\colon E\cong\mathbf{P}(\mathcal{N}_{X/X \times X}) \to X$ is a projective bundle, the corresponding relative Euler sequence takes the form
  \begin{equation}
    \label{equation:rel-euler}
    0 \to \mathcal{O}_p(-1) \to p^*\tangent_X \to \tangent_p(-1) \to 0.
  \end{equation}
  Now for~$k \geq 1$, taking the~$k$th exterior power of \eqref{equation:rel-euler} one obtains sequences
  \begin{equation}
    \label{equation:wedge-euler}
    0 \to\left( \bigwedge^{k-1}\tangent_p \right)(-k) \to p^*\left( \bigwedge^k\tangent_X \right)\to\left( \bigwedge^k \tangent_p \right)(-k) \to 0.
  \end{equation}

  Observe that the restriction of~$\tangent_{X\times \hilbtwo}$ to~$Z$ is isomorphic to~$\pi^*\circ p_1^*(\tangent_X)\oplus q^*\tangent_{\hilbtwo}$, hence we have the following commutative diagram with rows being short exact sequences:
  \begin{equation}
    \begin{tikzcd}
      0 \arrow[r] & \tangent_Z \arrow[d, equal] \arrow[r] & \pi^*\circ p_1^*(\tangent_X)\oplus q^*\tangent_{\hilbtwo} \arrow[d, two heads, "{(0,\identity)}"] \arrow[r] & \mathcal{N}_{Z/X\times\hilbtwo} \arrow[r] \arrow[d] & 0 \\
      0 \ar{r} & \tangent_Z \ar{r}& q^*\tangent_{\hilbtwo}\ar{r} & j_*\mathcal{O}_{E}(2E) \ar{r}& 0
    \end{tikzcd}
  \end{equation}
  By the snake lemma we get a short exact sequence
  \begin{equation}
    \label{equation:N}
    0\to \pi^*\circ p_1^*(\tangent_X)\to\mathcal{N}_{Z/X\times\hilbtwo} \to j_*\left( \mathcal{O}_p(-2) \right)\to 0
  \end{equation}
  which is the~$k=1$ case of \eqref{equation:wedge-seq}.

  Applying~$\LLL j^*\circ q_*$ to \eqref{equation:N} and taking cohomology we have the exact sequence
  \begin{equation}
    \label{equation:some-seq}
    0\to \mathcal{O}_p(-1)\to p^*\tangent_X\to j^*\mathcal{N}_{Z/X\times\hilbtwo}\to \mathcal{O}_p(-2)\to 0,
  \end{equation}
  where we used \cite[corollary~11.2]{MR2244106} and the fact that~$p_1\circ \pi\circ j=p_1\circ\Delta\circ p=p$ from \eqref{equation:notation}. One checks that the first morphism in \eqref{equation:some-seq} is the one in \eqref{equation:rel-euler}, and hence we have a short exact sequence:
  \begin{equation}
    \label{equation:jN}
    0\to \tangent_p(-1)\to j^*\mathcal{N}_{Z/X\times\hilbtwo}\to \mathcal{O}_p(-2)\to 0.
  \end{equation}
  Take the~$k$th exterior power of \eqref{equation:jN} to obtain:
  \begin{equation}
    \label{equation:jwedgeN}
    0\to \bigwedge^k\tangent_p(-k)\to j^*\left(\bigwedge^k\mathcal{N}\right)\to \bigwedge^{k-1}\tangent_p(-k-1)\to 0.
  \end{equation}
  Putting \eqref{equation:jwedgeN} and \eqref{equation:wedge-euler} together, we obtain the exact sequence
  \begin{equation}\label{equation:DerivedRestriction}
    0\to \bigwedge^{k-1}\tangent_p(-k)\to p^*\left(\bigwedge^k\tangent_X\right)\to j^*\left(\bigwedge^k\mathcal{N}_{Z/X\times\hilbtwo}\right)\to \bigwedge^{k-1}\tangent_p(-k-1)\to 0,
  \end{equation}
  which says precisely that if we apply~$\LLL j^*$ to \eqref{equation:wedge-seq}, we have a distinguished triangle.

  On the other hand, by \cref{lemma:Support} applied to \eqref{equation:N}, there exists a locally free sheaf~$\mathcal{F}$ on~$E$ fitting into the following short exact sequence:
  \begin{equation}
    \label{equation:desired}
    0 \to \pi^*\circ p_1^*\left( \bigwedge^k\tangent_X \right) \to \bigwedge^k\mathcal{N}_{Z/X\times\hilbtwo} \to j_*\mathcal{F} \to 0.
  \end{equation}
  Applying~$\LLL j^*$ to it and comparing to \eqref{equation:DerivedRestriction}, we see that~$\mathcal{F}\cong \bigwedge^{k-1}\tangent_p(-k-1)$. Hence \eqref{equation:desired} is the desired exact sequence.
\end{proof}

We now interpret the sheaf cohomology of the outer terms of \eqref{equation:wedge-seq}. The first term is dealt with by the following lemma.
\begin{lemma}
  \label{lemma:Z-to-X}
  There are isomorphisms
  \begin{equation}
    \HH^i\left( Z,\pi^*\circ p_1^*\left( \bigwedge^j\tangent_X \right) \right)\cong\HH^i\left( X,\bigwedge^j\tangent_X \right).
  \end{equation}
  for all~$i\geq 0$ and~$j\geq 0$.
\end{lemma}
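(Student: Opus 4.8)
The plan is to reduce everything to a single direct‑image computation via the projection formula. Write $f\coloneqq p_1\circ\pi\colon Z\to X$ for the composite of the blowup morphism $\pi\colon Z\cong\Bl_\Delta(X\times X)\to X\times X$ with the first projection, so that the sheaf in question is $f^*\bigl(\bigwedge^j\tangent_X\bigr)$. Since $\bigwedge^j\tangent_X$ is locally free, the Leray spectral sequence for $f$ together with the projection formula
\begin{equation}
  \RRR f_*\,f^*\Bigl(\bigwedge^j\tangent_X\Bigr)\cong\bigwedge^j\tangent_X\otimes^\LLL\RRR f_*\mathcal{O}_Z
\end{equation}
shows that it suffices to prove $\RRR f_*\mathcal{O}_Z\cong\mathcal{O}_X$. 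Granting this, the right‑hand side collapses to $\bigwedge^j\tangent_X$, and applying $\HH^i(X,-)$ gives the desired isomorphism for every $i\geq0$ and $j\geq0$.

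To compute $\RRR f_*\mathcal{O}_Z\cong\RRR p_{1,*}\circ\RRR\pi_*\mathcal{O}_Z$ I would proceed in two steps. First, $\RRR\pi_*\mathcal{O}_Z\cong\mathcal{O}_{X\times X}$: this is the vanishing of the higher direct images of the structure sheaf under the blowup of the smooth variety $X\times X$ along the smooth center $\Delta$, which has already been invoked in the proof of \cref{corollary:R''F'}. Second, I would push forward along $p_1$; because $\mathcal{O}_X$ is exceptional, base change and the Künneth formula give $\RRR p_{1,*}\mathcal{O}_{X\times X}\cong\HH^\bullet(X,\mathcal{O}_X)\otimes\mathcal{O}_X\cong\mathcal{O}_X$, exactly as in \eqref{equation:RpOZ}. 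Composing the two yields $\RRR f_*\mathcal{O}_Z\cong\mathcal{O}_X$, as required.

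There is no serious obstacle here: the argument is a formal consequence of the projection formula once the two elementary direct‑image computations are in place. The one point that deserves care is that the exceptional‑structure‑sheaf hypothesis is genuinely used and cannot be dropped — without the vanishing of $\HH^{\geq1}(X,\mathcal{O}_X)$ the sheaf $\RRR p_{1,*}\mathcal{O}_{X\times X}$ retains the higher cohomology of $\mathcal{O}_X$ as a complex, and the claimed isomorphism would fail in the corresponding degrees. Everything else is standard functoriality, so the proof is short.
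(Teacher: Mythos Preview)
Your proposal is correct and is essentially the same argument as the paper's: both use the projection formula together with $\RRR\pi_*\mathcal{O}_Z\cong\mathcal{O}_{X\times X}$ and the vanishing $\HH^{\geq1}(X,\mathcal{O}_X)=0$. The only cosmetic difference is that you package the two pushforwards into the composite $f=p_1\circ\pi$ and apply the projection formula once to $f$, whereas the paper applies it separately along $\pi$ and then along $p_1$; the content is identical.
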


\begin{proof}
  By the projection formula, the isomorphism~$\RRR\pi_*\mathcal{O}_Z\cong\mathcal{O}_X$ and the vanishing~$\HH^{\geq 1}(X,\mathcal{O}_X)=0$ we have that
  \begin{equation}
    \begin{aligned}
      \HH^i\left( Z,\pi^*\circ p_1^*\left( \bigwedge^j\tangent_X \right) \right)
      &\cong\HH^i\left( X\times X,p_1^*\left( \bigwedge^j\tangent_X \right) \right) \\
      &\cong\HH^i\left( X,\bigwedge^j\tangent_X \right).
    \end{aligned}
  \end{equation}
\end{proof}

For the third term we can prove the following, where we have already applied the adjunction~$\LLL j^*\dashv j_*$ in computing the sheaf cohomology.
\begin{lemma}
  \label{lemma:wedge-vanishing}
  There are isomorphisms
  \begin{equation}
    \HH^i\left( E,\bigwedge^{j-1}\tangent_p(-j-1) \right)\cong
    \begin{cases}
      0 & j \leq d-2 \\
      \HH^i(X,\omega_X^{-1}) & j=d-1
    \end{cases}
  \end{equation}
  for all~$i\geq 0$.
\end{lemma}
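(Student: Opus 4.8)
The plan is to push everything forward along the projective bundle $p\colon E\cong\mathbb{P}(\tangent_X)\to X$ of \eqref{equation:notation}, which is a $\mathbb{P}^{d-1}$\dash bundle, and to read off the answer from the Leray spectral sequence
\begin{equation*}
  \HH^a\left(X,R^bp_*\left(\bigwedge^{j-1}\tangent_p(-j-1)\right)\right)\Rightarrow\HH^{a+b}\left(E,\bigwedge^{j-1}\tangent_p(-j-1)\right).
\end{equation*}
Since the sheaf in question restricts on each fibre to a twist of differentials, the whole problem reduces to a fibrewise cohomology computation via Bott's formula, together with the identification of the resulting direct image sheaf on~$X$. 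Throughout I would use the relative Euler sequence \eqref{equation:rel-euler} and the projection formula.

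First I would treat the range~$1\le j\le d-2$. On any fibre~$\mathbb{P}^{d-1}$ one has~$\bigwedge^{j-1}\tangent_{\mathbb{P}^{d-1}}\cong\Omega^{d-j}_{\mathbb{P}^{d-1}}(d)$, so the restriction of~$\bigwedge^{j-1}\tangent_p(-j-1)$ is~$\Omega^{d-j}_{\mathbb{P}^{d-1}}(d-j-1)$. Here the twist~$d-j-1$ equals~$(d-j)-1$, and Bott's formula for~$\HH^\bullet(\mathbb{P}^{d-1},\Omega^{p}(\ell))$ shows this sheaf is acyclic in every degree: its three nonzero regimes ($\ell>p$, $\ell=0$, and~$\ell<p-(d-1)$) are all excluded once~$2\le d-j\le d-1$. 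Hence the fibre cohomology vanishes uniformly, so~$R^bp_*(\bigwedge^{j-1}\tangent_p(-j-1))=0$ for all~$b$, and the spectral sequence forces~$\HH^i(E,\bigwedge^{j-1}\tangent_p(-j-1))=0$.

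For~$j=d-1$ the sheaf is~$\bigwedge^{d-2}\tangent_p(-d)$. Taking determinants in \eqref{equation:rel-euler} gives~$\det\tangent_p\cong p^*\omega_X^{-1}\otimes\mathcal{O}_p(d)$, and since~$\tangent_p$ has rank~$d-1$ one has~$\bigwedge^{d-2}\tangent_p\cong\tangent_p^\vee\otimes\det\tangent_p$; combining these, the factor~$\mathcal{O}_p(d)$ cancels the twist~$\mathcal{O}_p(-d)$, so that~$\bigwedge^{d-2}\tangent_p(-d)\cong\tangent_p^\vee\otimes p^*\omega_X^{-1}$. By the projection formula it then suffices to compute~$\RRR p_*\tangent_p^\vee$. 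Dualising \eqref{equation:rel-euler} and twisting by~$\mathcal{O}_p(-1)$ produces the short exact sequence~$0\to\tangent_p^\vee\to p^*\Omega_X^1(-1)\to\mathcal{O}_p\to 0$; its middle term is acyclic because~$\RRR p_*\mathcal{O}_p(-1)=0$, whereas~$\RRR p_*\mathcal{O}_p\cong\mathcal{O}_X$, so~$\RRR p_*\tangent_p^\vee\cong\mathcal{O}_X[-1]$. Therefore~$\RRR p_*(\bigwedge^{d-2}\tangent_p(-d))\cong\omega_X^{-1}[-1]$ is concentrated in cohomological degree one, and Leray collapses to give~$\HH^i(E,\bigwedge^{d-2}\tangent_p(-d))\cong\HH^{i-1}(X,\omega_X^{-1})$ — which is precisely the~$\omega_X^{-1}$\dash contribution entering~$\mathrm{E}_\infty^{i,d-1}$ in \cref{theorem:spectral-sequence}.

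I expect the only genuine subtlety to be the twist bookkeeping together with pinning down the direct image. Concretely, the delicate points are the cancellation of~$\mathcal{O}_p(\pm d)$ via the determinant of the relative Euler sequence, and the identification of the single nonvanishing higher direct image~$R^1p_*\tangent_p^\vee\cong\mathcal{O}_X$ as the relative~$\HH^{1,1}$ of the bundle (equivalently, the degree-one concentration that produces the shift above). The Bott vanishing in the first range and the projection-formula reduction in the second are then routine.
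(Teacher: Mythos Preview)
Your argument is correct and reaches the same conclusion as the paper, namely~$\HH^{i-1}(X,\omega_X^{-1})$ in the case~$j=d-1$ (the index~$i$ rather than~$i-1$ in the displayed statement is a slip; both the paper's own proof and the use in \eqref{equation:TopLineInSS} confirm the shift you obtain). The route, however, is genuinely different. The paper works uniformly for all~$j$ by iterating the twisted Euler sequences \eqref{equation:wedge-euler-l}: since the middle term~$p^*\bigwedge^k\tangent_X\otimes\mathcal{O}_p(-l)$ is acyclic for~$1\le l\le d-1$, the connecting maps step the exterior power down one at a time, reducing~$\HH^i(E,\bigwedge^{j-1}\tangent_p(-j-1))$ to~$\HH^{i+j-1}(E,\mathcal{O}_p(-j-1))$, after which relative Serre duality handles the lone surviving case~$j=d-1$. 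You instead split the problem in two: for~$j\le d-2$ you invoke Bott's formula fibrewise to kill all higher direct images outright, and for~$j=d-1$ you use the determinant identity~$\bigwedge^{d-2}\tangent_p(-d)\cong\tangent_p^\vee\otimes p^*\omega_X^{-1}$ together with~$\RRR p_*\tangent_p^\vee\cong\mathcal{O}_X[-1]$ extracted from the dualised Euler sequence. Your approach is arguably slicker in the top row, since the determinant trick makes the appearance of~$\omega_X^{-1}$ and the degree shift immediate without any induction; the paper's iteration has the compensating advantage of treating all~$j$ by a single mechanism and of making the cohomological shift visible as an accumulation of connecting maps.
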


\begin{proof}
  For any $l \in \mathbb{Z}$, twisting \eqref{equation:wedge-euler} by $\mathcal{O}_p(-l)$ one obtains sequences
  \begin{equation}
    \label{equation:wedge-euler-l}
    0 \to \bigwedge^{k-1}\tangent_p(-k-l) \to p^*\bigwedge^k\tangent_X \otimes \mathcal{O}_p(-l) \to \bigwedge^k \tangent_p(-k-l) \to 0.
  \end{equation}

  Let us consider~$l=1,\ldots,d-1$. In this case we have~$\RRR p_*\mathcal{O}_p(-l)=0$, and hence
  \begin{equation}
    \HH^\bullet\left( E,p^*\bigwedge^k\tangent_X \otimes \mathcal{O}_p(-l) \right)=0.
  \end{equation}
  So from the long exact sequence for sheaf cohomology obtained from \eqref{equation:wedge-euler} we find isomorphisms
  \begin{equation}
    \HH^i\left( E,\bigwedge^k\tangent_p(-k-l) \right)\cong\HH^{i+1}\left( E,\bigwedge^{k-1}\tangent_p(-k-l) \right),
  \end{equation}
  for all~$i \in \mathbb{Z}$. Repeated use of this for different values of~$k$ gives
  \begin{equation}
    \HH^i\left( E,\bigwedge^{j-1}\tangent_p(-j-1) \right)
    \cong\HH^{i+j+1}(E,\mathcal{O}_p(-j-1))
    \cong
    \begin{cases}
      0 & j \neq d-1 \\
      \HH^{i+d-2}(E,\mathcal{O}_p(-d)) & j=d-1
    \end{cases}
  \end{equation}
  for all~$i \in \mathbb{Z}$.

  Finally, by relative Serre duality~\cite{MR0578050}
  \begin{equation}
    \RRR p_*\left(\mathcal{O}_p(-d)\right)\cong\RR^{d-1}p_*\left( \mathcal{O}_p(-d) \right)[1-d]\cong\omega_X^{-1}[1-d],
  \end{equation}
  and so
  \begin{equation}
    \HH^{i+d-2}(E,\mathcal{O}_p(-d)) \cong \HH^{i-1}(X,\omega_X^{-1}),
  \end{equation}
  which proves the lemma.
\end{proof}

We can now show the degeneration of the spectral sequence, which is second part of \cref{theorem:spectral-sequence}.
\begin{theorem}
  \label{theorem:degeneration-again}
  The spectral sequence \eqref{equation:spectral-sequence} degenerates at~$\mathrm{E}_2$.
\end{theorem}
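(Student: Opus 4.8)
The plan is to prove degeneration by a dimension count, comparing the total dimension of the $\mathrm{E}_2$-page in each total degree with the dimension of the abutment $\HHHH^n(X)$, which is known through the Hochschild--Kostant--Rosenberg decomposition. First I would note that since $q\colon Z\to\hilbtwo$ is finite, $q_*$ is exact and preserves sheaf cohomology, so by \cref{theorem:rel-ext} the nonzero entries are
\begin{equation}
  \mathrm{E}_2^{i,j}\cong\HH^i\left(Z,\bigwedge^j\mathcal{N}_{Z/X\times\hilbtwo}\right)\quad(1\le j\le d-1),
\end{equation}
together with $\mathrm{E}_2^{0,0}=k$. I would then feed the short exact sequence \eqref{equation:wedge-seq} into \cref{lemma:Z-to-X,lemma:wedge-vanishing}. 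For $1\le j\le d-2$ the third term has vanishing cohomology, so $\mathrm{E}_2^{i,j}\cong\HH^i(X,\bigwedge^j\tangent_X)$ exactly; for $j=d-1$ the long exact sequence of \eqref{equation:wedge-seq} only exhibits $\HH^i(Z,\bigwedge^{d-1}\mathcal{N}_{Z/X\times\hilbtwo})$ as an extension built from $\HH^{i-1}(X,\omega_X^{-1})$ and $\HH^i(X,\bigwedge^{d-1}\tangent_X)$.

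The key observation is that this extension yields an inequality in exactly the convenient direction. Writing the long exact sequence of \eqref{equation:wedge-seq} for $j=d-1$ as $\cdots\to\HH^i(A)\to\HH^i(B)\to\HH^i(C)\to\HH^{i+1}(A)\to\cdots$, with $A=\pi^*\circ p_1^*\bigwedge^{d-1}\tangent_X$, $B=\bigwedge^{d-1}\mathcal{N}_{Z/X\times\hilbtwo}$ and $C=j_*(\bigwedge^{d-2}\tangent_p(-d))$, I would record the elementary identity
\begin{equation}
  \dim\HH^i(B)=\dim\HH^i(A)+\dim\HH^i(C)-\dim\operatorname{im}(\partial_{i-1})-\dim\operatorname{im}(\partial_i),
\end{equation}
where $\partial_\bullet$ denotes the connecting maps. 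In particular $\dim\mathrm{E}_2^{i,d-1}\le\dim\HH^i(X,\bigwedge^{d-1}\tangent_X)+\dim\HH^{i-1}(X,\omega_X^{-1})$. Summing all entries in a fixed total degree $n=i+j$, using $\HH^n(X,\mathcal{O}_X)=\delta_{n,0}\,k$ and $\bigwedge^d\tangent_X\cong\omega_X^{-1}$, this gives
\begin{equation}
  \sum_{i+j=n}\dim\mathrm{E}_2^{i,j}\le\sum_{p+q=n}\dim\HH^p\left(X,\bigwedge^q\tangent_X\right)=\dim\HHHH^n(X).
\end{equation}

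Finally I would invoke the abutment: via the fully faithful functor obtained from \cref{theorem:fully-faithful} one has $\Ext^{i+j}_{X\times\hilbtwo}(\mathcal{I},\mathcal{I})\cong\HHHH^{i+j}(X)$, so $\sum_{i+j=n}\dim\mathrm{E}_\infty^{i,j}=\dim\HHHH^n(X)$. Since the bounded spectral sequence converges and each $\mathrm{E}_\infty^{i,j}$ is a subquotient of $\mathrm{E}_2^{i,j}$, the reverse inequality $\sum_{i+j=n}\dim\mathrm{E}_2^{i,j}\ge\dim\HHHH^n(X)$ is automatic, and the two bounds force equality of total dimensions in every degree. As $\dim\mathrm{E}_\infty^{i,j}\le\dim\mathrm{E}_2^{i,j}$ termwise, equality of the sums forces equality termwise, leaving no room for a nonzero differential; hence the sequence degenerates at $\mathrm{E}_2$ (and, as a by-product, all $\partial_\bullet$ vanish, which is precisely what pins down the extension in \cref{theorem:spectral-sequence}). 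The main obstacle is the row $j=d-1$: there $\mathrm{E}_2$ is genuinely only an extension and not visibly a direct sum, so its dimension cannot be computed outright. The trick is precisely that the one-sided inequality from the long exact sequence, combined with knowledge of the abutment, both sidesteps this and resolves the extension problem at once.
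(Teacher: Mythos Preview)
Your argument is correct and follows essentially the same route as the paper: identify $\mathrm{E}_2^{i,j}$ with $\HH^i(Z,\bigwedge^j\mathcal{N}_{Z/X\times\hilbtwo})$ via exactness of $q_*$, use \eqref{equation:wedge-seq} together with \cref{lemma:Z-to-X,lemma:wedge-vanishing} to bound the $\mathrm{E}_2$-dimensions above by the HKR summands, and conclude degeneration by comparing with the abutment $\HHHH^n(X)$. Your explicit bookkeeping of the connecting maps $\partial_\bullet$ and the observation that their vanishing is a free by-product is exactly what the paper records separately as \cref{corollary:short-exact}.
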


\begin{proof}
  We know that
  \begin{equation}
    \label{equation:degeneration}
    \dim_k \HHHH^n(X)=\sum_{i+j=n}\dim_k \mathrm{E}_{\infty}^{i,j} \leq \sum_{i+j=n} \dim_k \mathrm{E}_2^{i,j},
  \end{equation}
  and to establish degeneration it suffices to prove that the last inequality is in fact an equality. By the Hochschild--Kostant--Rosenberg theorem we know that
  \begin{equation}
    \dim_k \HHHH^n(X)=\sum_{i+j=n}\dim_k \HH^i\left( X,\bigwedge^j\tangent_X \right),
  \end{equation}
  so degeneration would follow by establishing the inequalities
  \begin{equation}
    \label{equation:inequalities}
    \sum_{i+j=n} \dim_k \mathrm{E}_2^{i,j} \leq \sum_{i+j=n} \dim_k \HH^i(X,\bigwedge^j \tangent_X),
  \end{equation}
  for all~$n \geq 0$. This is what we will do now.


  Since~$q_*$ is an exact functor, we have for every~$j=1,\ldots,d-1$
  \begin{equation}
    \mathrm{E}_2^{i,j}=\HH^i\left(\hilbtwo, q_*\left( \bigwedge^j\mathcal{N}_{Z/X\times\hilbtwo} \right) \right)\cong \HH^i\left(Z, \bigwedge^j\mathcal{N}_{Z/X\times\hilbtwo}\right).
  \end{equation}
  Now using the long exact sequence associated to \eqref{equation:wedge-seq} for $k=j$ we get
  \begin{equation}
    \label{equation:long-exact}
    \cdots \to \HH^i\left(Z,\pi^*p_1^*\bigwedge^j\tangent_X\right) \to \HH^i\left(Z,\bigwedge^j\mathcal{N}_{Z/X\times\hilbtwo}\right) \to \HH^i\left(Z,j_*\bigwedge^{j-1}\tangent_p(-j-1)\right) \to \cdots
  \end{equation}

  Combining \eqref{equation:long-exact} with \cref{lemma:Z-to-X,lemma:wedge-vanishing}, we have for~$j=1,\ldots,d-2$ that
  \begin{equation}
    \label{equation:LowerLinesInSS}
    \mathrm{E}_2^{i,j}=\HH^i\left(Z, \bigwedge^j\mathcal{N}_{Z/X\times\hilbtwo}\right)\cong \HH^i\left(X, \bigwedge^j\tangent_X\right).
  \end{equation}

  On the other hand, for~$j=d-1$ we have that~$\mathrm{E}_2^{i,d-1}=\HH^i\left(Z, \bigwedge^{d-1}\mathcal{N}_{Z/X\times\hilbtwo}\right)$ fits into a long exact sequence with parts
  \begin{equation}
    \label{equation:TopLineInSS}
    \cdots \HH^{i-2}\left(X, \omega_X^{-1}\right)\to\HH^i\left(X, \bigwedge^{d-1}\tangent_X\right)\to \mathrm{E}_2^{i,d-1}\to\HH^{i-1}\left(X, \omega_X^{-1}\right)\to\HH^{i+1}\left(X, \bigwedge^{d-1}\tangent_X\right)\to\cdots
  \end{equation}
  hence
  \begin{equation}
    \dim_k\mathrm{E}_2^{i,d-1}\leq\dim_k\HH^i\left( X,\bigwedge^{d-1}\tangent_X \right)+\dim_k\HH^{i-1}\left( X,\omega_X^{-1} \right)
  \end{equation}
  which suffices to obtain \eqref{equation:inequalities}, and hence degeneration.
\end{proof}

\begin{corollary}
  \label{corollary:short-exact}
  For every~$i \in \mathbb{Z}$ there are short exact sequences
  \begin{equation}
    0\to \HH^i\left(X, \bigwedge^{d-1}\tangent_X\right)\to \mathrm{E}_2^{i,d-1}\to\HH^{i-1}\left(X, \omega_X^{-1}\right)\to 0.
  \end{equation}
\end{corollary}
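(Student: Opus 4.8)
The plan is to obtain the short exact sequence by splitting the long exact sequence \eqref{equation:TopLineInSS}, using the degeneration of the spectral sequence (\cref{theorem:degeneration-again}) to control its connecting homomorphisms. Write $B_i \coloneqq \HH^i(X, \bigwedge^{d-1}\tangent_X)$ and $A_i \coloneqq \HH^i(X, \omega_X^{-1})$, so that \eqref{equation:TopLineInSS} reads
\[
\cdots \to A_{i-2} \to B_i \xrightarrow{\alpha_i} \mathrm{E}_2^{i,d-1} \xrightarrow{\beta_i} A_{i-1} \to B_{i+1} \to \cdots.
\]
The first thing I would record is the elementary principle that, for finite-dimensional vector spaces, exactness at $\mathrm{E}_2^{i,d-1}$ gives
\[
\dim_k \mathrm{E}_2^{i,d-1} = \dim_k \operatorname{im}\alpha_i + \dim_k \operatorname{im}\beta_i \leq \dim_k B_i + \dim_k A_{i-1},
\]
with equality if and only if $\alpha_i$ is injective and $\beta_i$ is surjective; by exactness these two conditions are in turn equivalent to the vanishing of the connecting maps $A_{i-2}\to B_i$ and $A_{i-1}\to B_{i+1}$, which is exactly what is needed to isolate $0 \to B_i \to \mathrm{E}_2^{i,d-1} \to A_{i-1} \to 0$.

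The heart of the argument is then to show that this equality of dimensions holds for every $i$, and here I would feed in the degeneration. The key identification is $\bigwedge^d \tangent_X \cong \omega_X^{-1}$, so that $A_{i-1} = \HH^{i-1}(X,\omega_X^{-1})$ is the Hochschild--Kostant--Rosenberg summand $\HH^{i-1}(X, \bigwedge^d \tangent_X)$, which lives in total degree $(i-1)+d = i+(d-1)$. Fixing $n$ and comparing the two expressions for $\dim_k \HHHH^n(X)$ --- the HKR sum $\sum_{p+q=n}\dim_k\HH^p(X, \bigwedge^q \tangent_X)$ and, since the sequence degenerates, the sum $\sum_{i+j=n}\dim_k \mathrm{E}_2^{i,j}$ --- the identifications \eqref{equation:LowerLinesInSS} on the lines $j = 0,\ldots, d-2$ cancel the HKR summands for $q = 0,\ldots,d-2$ term by term. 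Because the top line contributes a single entry $\mathrm{E}_2^{n-(d-1),d-1}$ to each total degree $n$, what remains is forced to be
\[
\dim_k \mathrm{E}_2^{i,d-1} = \dim_k \HH^i(X, \bigwedge^{d-1}\tangent_X) + \dim_k \HH^{i-1}(X,\omega_X^{-1}),
\]
with $i = n-(d-1)$, for every $i$.

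Combining the two steps, this dimension equality forces $\alpha_i$ injective and $\beta_i$ surjective, which yields the asserted short exact sequence. I do not expect a real obstacle here: all the genuine work sits in the degeneration already established, and the only subtle point is bookkeeping --- one must observe that the cancellation against the lower lines is term-by-term within each total degree, so that the lone top-line entry is pinned down exactly rather than merely in aggregate.
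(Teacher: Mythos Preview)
Your proposal is correct and follows essentially the same approach as the paper: both use that the degeneration forces the inequality \eqref{equation:inequalities} to be an equality, then cancel the lower lines $j\leq d-2$ via \eqref{equation:LowerLinesInSS} to pin down $\dim_k\mathrm{E}_2^{i,d-1}$ exactly, which forces the connecting maps in \eqref{equation:TopLineInSS} to vanish. The paper states this in a single sentence, whereas you have spelled out the dimension-counting and the equivalence with injectivity/surjectivity of $\alpha_i,\beta_i$ explicitly, but the argument is the same.
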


\begin{proof}
  This follows immediately from the inequality \eqref{equation:inequalities}, which implies that the connecting morphisms
  \begin{equation}
    \HH^{i-2}(X, \omega_X^{-1})\to\HH^i\left(X, \bigwedge^{d-1}\tangent_X\right)
  \end{equation}
  in \eqref{equation:TopLineInSS} are zero.
\end{proof}

\subsection{Degeneration of the spectral sequence for surfaces}
\label{subsection:degeneration-surfaces}
In dimension~2 the degeneration can be understood more directly as follows, where we will use~$S$ instead of~$X$ to emphasise that we are working in dimension~2. Note that we do not need to restrict ourselves to~2~points, as~$\hilbn{n}{S}$ is smooth for every~$n\geq 2$, and we can use the fully faithfulness result by Krug--Sosna, as long as~$\mathcal{O}_S$ is exceptional.

We want to describe the relative local-to-global Ext spectral sequence
\begin{equation}
  \mathrm{E}_2^{i,j}=\HH^i(\hilbn{n}{S},\sheafExt_{p_{\hilbn{n}{S}}}^j(\mathcal{I},\mathcal{I}))\Rightarrow\Ext_{S\times\hilbn{n}{S}}^{i+j}(\mathcal{I},\mathcal{I}),
\end{equation}
where $p_{\hilbn{n}{S}}: S\times \hilbn{n}{S}\to \hilbn{n}{S}$ is the natural projection.
Notice that in this case~$\sheafExt_{p_{\hilbn{n}{S}}}^j(\mathcal{I},\mathcal{I})=0$
if~$j\geq 3$, as the relative dimension of~$p_{\hilbn{n}{S}}$ is~2.

In \cref{proposition:iso-tangent} we have described the relative Ext when~$j=1$. The only mising case is~$j=2$, and as~$\hilbn{n}{S}$ is the moduli space of stable sheaves with Mukai vector~$\sqrt{\td_S}\cdot(1,0,-n)=\sqrt{\td_{S}}-(0,0,n)$, we can apply \cite[theorem~10.2.1]{MR2665168} to get
\begin{equation}
  \label{equation:huybrechts-lehn}
  \sheafExt_{p_{\hilbn{n}{S}}}^j(\mathcal{I},\mathcal{I})\cong
  \begin{cases}
    \mathcal{O}_{\hilbn{n}{S}} & j=0 \\
    \tangent_{\hilbn{n}{S}} & j=1 \\
    \HH^2(S,\mathcal{O}_S)\otimes_k\mathcal{O}_{\hilbn{n}{S}} & j=2
  \end{cases}
\end{equation}
In particular we get that in the situation we are mostly interested in, i.e.~where~$\mathcal{O}_S$ is exceptional, the interesting part of the~$\mathrm{E}_2$\dash page therefore looks like
\begin{equation}
  \label{equation:E2-surface}
  \begin{tikzcd}[column sep=tiny]
    0 \arrow[rrd] & 0 \arrow[rrd] & 0 & 0 & \ldots \\
    \HH^0(\hilbn{n}{S},\tangent_{\hilbn{n}{S}}) \arrow[rrd] & \HH^1(\hilbn{n}{S},\tangent_{\hilbn{n}{S}}) \arrow[rrd] & \HH^2(\hilbn{n}{S},\tangent_{\hilbn{n}{S}}) & \HH^3(\hilbn{n}{S},\tangent_{\hilbn{n}{S}}) & \ldots \\
    \HH^0(\hilbn{n}{S},\mathcal{O}_{\hilbn{n}{S}}) & 0 & 0 & 0 & \ldots
  \end{tikzcd}
\end{equation}
using \eqref{equation:huybrechts-lehn} and \cref{lemma:HodgeNumberHilb}. All the morphisms are necessarily~0. By the following proposition we know that the second row is moreover~0 to the right of what is pictured.
\begin{proposition}
  \label{proposition:h-hilb-hh-surface}
  Let~$S$ be a smooth projective surface with exceptional structure sheaf. Then
  \begin{equation}
    \HH^i(\hilbn{n}{S},\tangent_{\hilbn{n}{S}})\cong
    \begin{cases}
      \HHHH^{i+1}(S) & i\leq 3 \\
      0 & i\geq 4.
    \end{cases}
  \end{equation}
\end{proposition}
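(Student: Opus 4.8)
The plan is to read the result straight off the relative local-to-global Ext spectral sequence
\begin{equation}
  \mathrm{E}_2^{i,j}=\HH^i(\hilbn{n}{S},\sheafExt_{p_{\hilbn{n}{S}}}^j(\mathcal{I},\mathcal{I}))\Rightarrow\Ext_{S\times\hilbn{n}{S}}^{i+j}(\mathcal{I},\mathcal{I}),
\end{equation}
whose $\mathrm{E}_2$\dash page has already been described in \eqref{equation:E2-surface}. First I would record its shape. By \eqref{equation:huybrechts-lehn} the relative Ext sheaves vanish for $j\geq 3$ (the relative dimension of $p_{\hilbn{n}{S}}$ being $2$) and for $j=2$ (since $\HH^2(S,\mathcal{O}_S)=0$ by exceptionality), while the row $j=0$ is $\mathcal{O}_{\hilbn{n}{S}}$ and the row $j=1$ is $\tangent_{\hilbn{n}{S}}$. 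Using that $\hilbn{n}{S}$ again has an exceptional structure sheaf (\cref{lemma:HodgeNumberHilb}, respectively its analogue for all $n$), the row $j=0$ contributes $\HH^0(\hilbn{n}{S},\mathcal{O}_{\hilbn{n}{S}})\cong k$ at $i=0$ and vanishes for $i\geq 1$. Hence the whole $\mathrm{E}_2$\dash page is concentrated in the two adjacent rows $j=0,1$.

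Next I would verify degeneration at $\mathrm{E}_2$. The only differential that could be nonzero on the $\mathrm{E}_2$\dash page is $d_2\colon\mathrm{E}_2^{i,1}\to\mathrm{E}_2^{i+2,0}$, whose target lies in the $j=0$ row and hence vanishes as soon as $i+2\geq 1$; since $i\geq 0$ this is automatic, so $d_2=0$. A higher differential $d_r$ with $r\geq 3$ shifts $j$ by $r-1\geq 2$, so it connects the two nonzero rows $\{0,1\}$ only to zero rows and therefore vanishes as well. Thus $\mathrm{E}_\infty^{i,j}=\mathrm{E}_2^{i,j}$ for all $i,j$.

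Then I would combine the degeneration with the identification of the abutment. By the Krug--Sosna fully faithfulness and the resulting isomorphism \eqref{equation:hhiso}, we have $\Ext_{S\times\hilbn{n}{S}}^{k}(\mathcal{I},\mathcal{I})\cong\HHHH^k(S)$. In total degree $k=i+j$ with $k\geq 1$, the only surviving contribution is the single term $(i,j)=(k-1,1)$, so there is no extension problem and
\begin{equation}
  \HHHH^{i+1}(S)\cong\mathrm{E}_2^{i,1}=\HH^i(\hilbn{n}{S},\tangent_{\hilbn{n}{S}})
\end{equation}
for every $i\geq 0$. Finally, since $S$ is a surface the Hochschild--Kostant--Rosenberg decomposition gives $\HHHH^{m}(S)\cong\bigoplus_{p+q=m}\HH^p(S,\bigwedge^q\tangent_S)=0$ whenever $m\geq 5$ (as $p,q\leq 2$), so the right-hand side vanishes exactly when $i+1\geq 5$, i.e.\ $i\geq 4$. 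This yields the two cases in the statement.

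The argument is essentially bookkeeping once the $\mathrm{E}_2$\dash page is in hand, so I do not anticipate a genuine obstacle: the differentials vanish for the trivial reason that the page occupies two adjacent rows with the lower one supported at a single spot, and the degree shift $i\mapsto i+1$ is just the total-degree accounting. The one input I would flag as requiring justification beyond the case $n=2$ is the exceptionality of $\mathcal{O}_{\hilbn{n}{S}}$ for general $n$, which is not covered by \cref{lemma:HodgeNumberHilb} as stated but follows from the G\"ottsche--Soergel description of the Hodge numbers referenced earlier.
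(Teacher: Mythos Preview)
Your proposal is correct and follows essentially the same approach as the paper: both read the result off the relative local-to-global Ext spectral sequence \eqref{equation:l-t-g-ss}, use the description of the $\mathrm{E}_2$\dash page from \eqref{equation:huybrechts-lehn} and \eqref{equation:E2-surface}, identify the abutment with $\HHHH^\bullet(S)$ via fully faithfulness, and invoke HKR for the vanishing when $i\geq 4$. Your argument merely spells out in detail why the differentials vanish (two adjacent rows, bottom row supported at a single spot), which the paper summarises as ``All the morphisms are necessarily~0''; your flag about needing exceptionality of $\mathcal{O}_{\hilbn{n}{S}}$ for general~$n$ is well-taken, and indeed the paper tacitly relies on the G\"ottsche--Soergel description here rather than \cref{lemma:HodgeNumberHilb} as stated.
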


\begin{proof}
  As the spectral sequence degenerates, we have that
  \begin{equation}
    \mathrm{E}_2^{i,1}\cong\Ext_{S\times\hilbn{n}{S}}^{i+1}(\mathcal{I},\mathcal{I})\cong\HHHH^{i+1}(S),
  \end{equation}
  where the second isomorphism uses the fully faithfulness (\cref{theorem:fully-faithful}). By the Hochschild--Kostant--Rosenberg decomposition we know that~$\HHHH^{i+1}(S)=0$ for~$i\geq 4$.
\end{proof}

\begin{remark}
  Nowhere have we used that the moduli space is the Hilbert scheme of points. The identification \eqref{equation:huybrechts-lehn} and the relative local-to-global Ext spectral sequence can be used verbatim for other moduli spaces of sheaves on surfaces with exceptional structure sheaf. The only required ingredient is the fully faithful functor
  \begin{equation}
    \Phi_{\mathcal{E}}\colon\derived^\bounded(S)\to\derived^\bounded(\moduli_S(r,c_1,c_2))
  \end{equation}
  where~$(r,c_1,c_2)$ is such that~$\moduli_S(r,c_1,c_2)$ is smooth and projective, and~$\mathcal{E}$ denotes the universal sheaf on~$S\times\moduli_S(r,c_1,c_2)$. Unfortunately we have no other examples of this phenomenon.
\end{remark}

Let us give an example of a surface such that~$\HH^3(\hilbn{n}{S},\tangent_{\hilbn{n}{S}})$ is nonzero.
\begin{example}
  \label{example:fake-projective-plane}
  Let~$S$ be a fake projective plane. As the Hodge numbers of~$S$ are the same as those of~$\mathbb{P}^2$ we know that the structure sheaf is an exceptional object. For the fake projective planes in \cite{MR3341791} we have by remark~3.2 of~op.cit.~that
  \begin{equation}
    \dim_k\HH^2(S,\omega_S^\vee)=9.
  \end{equation}
  Hence~$\HH^3(\hilbn{n}{S},\tangent_{\hilbn{n}{S}})\cong \HHHH^{4}(S)\cong\HH^2(S, \omega_{S}^{\vee})\neq 0$ in this case.
\end{example}

On the other hand, the vanishing for~$i\geq 4$ does not necessarily hold when the structure sheaf is not exceptional.
\begin{example}
  \label{example:K3-1}
  Let~$S$ be a K3~surface. As~$\hilbn{n}{S}$ is hyperk\"ahler we can see using \cite{MR1219901} that
  \begin{equation}
    \dim_k\HH^{2n-1}(\hilbn{n}{S},\tangent_{\hilbn{n}{S}})=21.
  \end{equation}
  A more careful analysis shows that it is nonzero in all odd degrees up to~$2n-1$.
\end{example}

\section{Deformations of Hilbert squares}
\label{section:deformation-hilbert-squares}
The deformation theory of Hilbert schemes of points on curves and surfaces is well-studied. The case of curves, to which our results do not apply, can be found in \cite{MR1272697}. For surfaces the main results can be found in \cite{MR1354269}, and we will briefly summarise the situation before explaining how our results give new proofs for some known results.

In higher dimensions the description of the deformation theory of the Hilbert square is new, as far as we know. It is also somewhat surprising, as it gives a strong no-go result for noncommutative algebraic geometry when compared to the case of noncommutative surfaces, as we will explain.

\subsection{Dimension 2}
\label{subsection:dimension-2}
To emphasise that we are in the~2\dash dimensional setting here, we will write~$S$ instead of~$X$ to denote a smooth projective surface, as we did in \cref{subsection:degeneration-surfaces}. The results of \cite{MR1354269} can be summarised as follows.
\begin{theorem}[Fantechi]
  \label{theorem:fantechi}
  Let~$S$ be a smooth projective surface such that
  \begin{enumerate}
    \item $\HH^0(S,\tangent_S)=0$ or~$\HH^1(S,\mathcal{O}_S)=0$;
    \item and~$\HH^0(S,\omega_S^\vee)=0$
  \end{enumerate}
  then the natural transformation
  \begin{equation}
    \label{eq:nat-eq}
    \Def_S\to\Def_{\hilbn{n}{S}}
  \end{equation}
  is a natural equivalence of deformation functors.
\end{theorem}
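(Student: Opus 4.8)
The plan is to reduce the statement to the infinitesimal level via the standard criterion for a morphism of deformation functors admitting an obstruction theory: such a morphism is an isomorphism as soon as it is bijective on tangent spaces and injective on obstruction spaces. For $\Def_S$ these spaces are $\HH^1(S,\tangent_S)$ and $\HH^2(S,\tangent_S)$, while for $\Def_{\hilbn{n}{S}}$ they are $\HH^1(\hilbn{n}{S},\tangent_{\hilbn{n}{S}})$ and $\HH^2(\hilbn{n}{S},\tangent_{\hilbn{n}{S}})$. The whole argument thus amounts to understanding the differential and the obstruction map of \eqref{eq:nat-eq}.

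First I would construct the natural transformation \eqref{eq:nat-eq} geometrically, sending a flat deformation $\mathcal{S}\to\Spec A$ of $S$ over an Artinian local $k$\dash algebra to the relative Hilbert scheme $\hilbn{n}{(\mathcal{S}/A)}\to\Spec A$, which is flat with central fibre $\hilbn{n}{S}$; functoriality in $A$ yields the transformation of functors. The heart of the argument is then to compute its differential and obstruction map using \cref{proposition:h-hilb-hh-surface}. Indeed, that proposition together with the Hochschild--Kostant--Rosenberg decomposition and the exceptionality of $\mathcal{O}_S$ yields, for a surface, the chain of isomorphisms
\begin{equation}
  \HH^i(\hilbn{n}{S},\tangent_{\hilbn{n}{S}})\cong\HHHH^{i+1}(S)\cong\HH^i(S,\tangent_S)\oplus\HH^{i-1}(S,\omega_S^\vee),
\end{equation}
valid for $i\leq 3$, because $\bigwedge^2\tangent_S\cong\omega_S^\vee$ on a surface and the remaining summands $\HH^{i+1}(S,\mathcal{O}_S)$ vanish.

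Taking $i=1$ identifies the target tangent space with $\HH^1(S,\tangent_S)\oplus\HH^0(S,\omega_S^\vee)$. The next step is to check that the differential of \eqref{eq:nat-eq} is precisely the inclusion of the first summand, equivalently the injection in Hitchin's sequence \eqref{equation:hitchin-short-exact}; granting this, condition~(2) makes the complementary summand $\HH^0(S,\omega_S^\vee)$ vanish, so the differential is bijective. Taking $i=2$ likewise identifies the target obstruction space with $\HH^2(S,\tangent_S)\oplus\HH^1(S,\omega_S^\vee)$, and once the obstruction map is identified with the inclusion of the first summand it is automatically injective. The criterion then delivers the equivalence \eqref{eq:nat-eq}.

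The main obstacle is exactly the two identifications just invoked: that the differential and the obstruction map of the geometric transformation coincide with the canonical inclusions of summands in the decomposition above. The natural bridge is \cref{proposition:iso-tangent}, which realises $\tangent_{\hilbn{n}{S}}$ as the relative $\sheafExt^1$ of the universal ideal sheaf; through it the Kodaira--Spencer class of $\hilbn{n}{(\mathcal{S}/A)}$ can be matched with that of $\mathcal{S}/A$. A secondary point is the role of hypothesis~(1): under the standing assumption that $\mathcal{O}_S$ is exceptional it holds automatically, since then $\HH^1(S,\mathcal{O}_S)=0$, and this is precisely what makes the spectral-sequence computation and \cref{proposition:h-hilb-hh-surface} available. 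Recovering Fantechi's statement in its stated generality, where only one of the two vanishings in~(1) is assumed, instead requires a direct comparison of the deformation complexes of $S$ and $\hilbn{n}{S}$ rather than the degenerate spectral sequence.
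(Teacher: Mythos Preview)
The paper does not give its own proof of this theorem: it is stated as a result of Fantechi, with a citation to \cite{MR1354269}, and serves only as background for the paper's discussion of the surface case. There is therefore nothing in the paper to compare your argument against.

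Regarding your sketch on its own merits: the strategy via the standard criterion is sound, but the argument as written is not a proof of the theorem as stated. Your computation of $\HH^i(\hilbn{n}{S},\tangent_{\hilbn{n}{S}})$ relies on \cref{proposition:h-hilb-hh-surface}, which in turn rests on the Krug--Sosna fully faithfulness and hence requires $\mathcal{O}_S$ to be exceptional. You note this yourself, but it means your argument does not touch the case where only $\HH^0(S,\tangent_S)=0$ holds in hypothesis~(1), and Fantechi's original proof in \cite{MR1354269} proceeds quite differently, by analysing the deformation theory of the symmetric product and the Hilbert--Chow morphism directly. Even in the exceptional case you have not closed the main gap you flag: identifying the differential and obstruction map of the geometric transformation with the inclusion of the $\HH^i(S,\tangent_S)$ summand. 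This identification is not formal from \cref{proposition:iso-tangent} alone; it requires tracking the Kodaira--Spencer class of the relative Hilbert scheme through the relative local-to-global Ext spectral sequence, and this compatibility is exactly the substance of the result.
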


In particular, when~$S$ is a surface of general type, or an Enriques surface, we have that~$\Def_S\cong\Def_{\hilbn{n}{S}}$. On the other hand, for K3~surfaces~$\dim_k\HH^1(\hilbn{n}{S},\tangent_{\hilbn{n}{S}})=21$, whereas~$\dim_k\HH^{1}(S,\tangent_S)=20$, so \eqref{eq:nat-eq} cannot be an equivalence (see also \cref{example:K3-1}).

For del Pezzo surfaces the deformation theory also involves \emph{noncommutative deformations}, since the~$\HH^0(S,\omega_S^\vee)$ term appearing in the Hochschild--Kostant--Rosenberg decomposition for~$\HHHH^2(S)$ is non-zero. This leads to a very rich theory: for~$\mathbb{P}^2$ and~$\mathbb{P}^1\times\mathbb{P}^1$ consult \cite{MR1230966,MR2836401}, and for their blowups see \cite{MR1846352}. As will be explained below this is reflected in the non-rigidity of the corresponding Hilbert schemes of points \cite{MR2303228,MR2102090}.

From now on we will assume that~$\mathcal{O}_S$ is an exceptional line bundle, so that we can apply the fully faithfulness result of Krug--Sosna for~$n\geq 2$ (resp.~ours for~$n=2$). We want to keep~2~types of examples in mind:
\begin{enumerate}
  \item surfaces of general type with exceptional structure sheaf, such as fake projective planes (see also \cref{example:fake-projective-plane}), Beauville surfaces (or fake projective quadrics), or Burniat surfaces;
  \item del Pezzo surfaces.
\end{enumerate}
Remark that we can only apply Fantechi's result to the first type of surfaces, as the second condition in theorem \ref{theorem:fantechi} rules out Poisson surfaces.

We have that \cref{corollary:short-exact} specialises to give the short exact sequences
\begin{equation}
  \label{equation:hitchin-short}
  0\to \HH^i(S,\tangent_S)\to \HH^i(\hilbn{n}{S},\tangent_{\hilbn{n}{S}})\to\HH^{i-1}(S, \omega_S^{-1})\to 0.
\end{equation}
For~$i=1$ this short exact sequence can be compared to a short exact sequence \eqref{equation:hitchin-short-exact} obtained by Hitchin using purely geometric methods, not involving any derived categories \cite[theorem~9]{MR3024823}. For completeness' sake we give the full statement, including the (weaker than ours) condition on~$S$.
\begin{theorem}[Hitchin]
  \label{theorem:hitchin}
  Let~$S$ be a smooth projective surface such that~$\HH^1(S,\mathcal{O}_S)=0$. Then there exists a short exact sequence
  \begin{equation}
    0\to\HH^1(S,\tangent_S)\to\HH^1(\hilbn{n}{S},\tangent_{\hilbn{n}{S}})\to\HH^0(S,\omega_S^\vee)\to 0.
  \end{equation}
\end{theorem}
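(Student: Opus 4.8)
The plan is to recover this as the degree-one shadow of our spectral-sequence computation and to isolate the single point where the stated hypothesis is genuinely weaker than our standing one. Under the standing assumption that $\mathcal{O}_S$ is exceptional the statement is immediate: \cref{proposition:h-hilb-hh-surface} gives $\HH^1(\hilbn{n}{S},\tangent_{\hilbn{n}{S}})\cong\HHHH^2(S)$, and the Hochschild--Kostant--Rosenberg decomposition \eqref{equation:hkr-hh-2} reduces to $\HHHH^2(S)\cong\HH^1(S,\tangent_S)\oplus\HH^0(S,\omega_S^{-1})$ once the gerby summand $\HH^2(S,\mathcal{O}_S)$ vanishes, so the asserted sequence holds (and in fact splits). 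Equivalently this is the case $i=1$ of \eqref{equation:hitchin-short}. Thus the only content beyond what we have already proved is that the sequence survives under the weaker hypothesis $\HH^1(S,\mathcal{O}_S)=0$, where $\HH^2(S,\mathcal{O}_S)$ need not vanish.

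To handle this weaker hypothesis I would rerun the argument of \cref{subsection:degeneration-surfaces}, observing that the relative Ext sheaves \eqref{equation:huybrechts-lehn} of \cite[theorem~10.2.1]{MR2665168} only require $\hilbn{n}{S}$ to be a fine moduli space and are indifferent to the value of $\HH^2(S,\mathcal{O}_S)$, and likewise that the identification $\sheafExt^1_{p_{\hilbn{n}{S}}}(\mathcal{I},\mathcal{I})\cong\tangent_{\hilbn{n}{S}}$ in \eqref{equation:huybrechts-lehn}, whose proof (\cref{proposition:iso-tangent}) uses only $\Pic^0(S)=0$. For $n=2$ one can be fully explicit: the blowup sequence \eqref{equation:wedge-seq} with $d=2$ and $k=1$,
\begin{equation}
  0\to\pi^*\circ p_1^*(\tangent_S)\to\mathcal{N}_{Z/S\times\hilbtwo}\to j_*\mathcal{O}_p(-2)\to 0,
\end{equation}
pushes forward (using exactness of $q_*$ and \cref{proposition:iso-tangent}) to the long exact sequence
\begin{equation}
  0\to\HH^1(S,\tangent_S)\to\HH^1(\hilbtwo,\tangent_{\hilbtwo})\to\HH^0(S,\omega_S^{-1})\xrightarrow{\delta}\HH^2\left(Z,\pi^*\circ p_1^*\tangent_S\right),
\end{equation}
where injectivity on the left is automatic from $\HH^0(E,\mathcal{O}_p(-2))=0$, and exactness on the right amounts to the vanishing of the connecting map $\delta$.

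The main obstacle is exactly this vanishing. As in \cref{lemma:Z-to-X}, the K\"unneth formula together with $\HH^1(S,\mathcal{O}_S)=0$ gives $\HH^i(Z,\pi^*\circ p_1^*\tangent_S)\cong\HH^i(S,\tangent_S)$ for $i\leq 1$, but the target of $\delta$ at $i=2$ carries an additional summand $\HH^0(S,\tangent_S)\otimes\HH^2(S,\mathcal{O}_S)$ -- precisely the contribution that our standing exceptionality assumption suppresses. Under full exceptionality $\delta=0$ is forced by the degeneration count of \cref{theorem:degeneration-again,corollary:short-exact}; under the weaker hypothesis it must be established independently. Rather than reprove this, I would invoke Hitchin's geometric construction \cite[theorem~9]{MR3024823}, which builds the surjection $\HH^1(\hilbn{n}{S},\tangent_{\hilbn{n}{S}})\twoheadrightarrow\HH^0(S,\omega_S^{-1})$ from Poisson bivectors on $S$ directly and for all $n$, thereby supplying the missing surjectivity (equivalently $\delta=0$) that our methods do not deliver without the vanishing of $\HH^2(S,\mathcal{O}_S)$.
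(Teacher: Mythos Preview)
The paper does not prove this theorem at all: it is quoted from \cite[theorem~9]{MR3024823} as Hitchin's result, stated ``for completeness' sake'' immediately after observing that \eqref{equation:hitchin-short} (the $i=1$ case of \cref{corollary:short-exact}, or equivalently \cref{proposition:h-hilb-hh-surface} combined with Hochschild--Kostant--Rosenberg) recovers the sequence under the paper's standing hypothesis that $\mathcal{O}_S$ is exceptional. Your proposal does exactly this --- derive the exceptional case from the paper's machinery and defer to Hitchin for the weaker hypothesis --- so it matches the paper's treatment precisely; your additional analysis of where the spectral-sequence argument breaks down when $\HH^2(S,\mathcal{O}_S)\neq 0$ is accurate and goes slightly beyond what the paper spells out.
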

This sequence expresses that the deformations of~$\hilbn{n}{S}$ are described as a combination of geometric deformations and noncommutative deformations of~$S$. Coming back to the~2~types of surfaces we are interested in, we have that
\begin{enumerate}
  \item deformations of~$\hilbn{n}{S}$ when~$S$ is a surface of general type with exceptional structure sheaf are all induced by geometric deformations of~$S$, as explained by Fantechi's result in \cref{theorem:fantechi}, because there are no Poisson structures on such surfaces;
  \item deformations of~$\hilbn{n}{S}$ when~$S$ is a del Pezzo surface are indeed described using (possibly) noncommutative deformations of~$S$, as explained in \cite{MR2303228,MR2102090,MR3669875}.
\end{enumerate}

\begin{remark}
  \label{remark:noncommutative-deformations}
  In \cite{MR3488782,belmans-raedschelders} the fully faithful functor~$\derived^\bounded(S)\to\derived^\bounded(\hilbn{2}{S})$ is generalised to \emph{noncommutative deformations} of~$S=\mathbb{P}^2,\mathbb{P}^1\times\mathbb{P}^1$. The Hilbert scheme of~$2$~points has commutative deformations, as one can see from \cref{corollary:short-exact} for~$i=2$ or \cref{theorem:hitchin}. For a noncommutative plane or quadric, denoted~$S_{\mathrm{nc}}$, the associated commutative deformation of~$\hilbn{2}{S}$ will be denoted~$\hilbn{2}{S_{\mathrm{nc}}}$. In op.~cit.~the fully faithful functor~$\Phi_{\mathcal{I}}$ is constructed in families of noncommutative projective planes or quadrics.

  The isomorphism
  \begin{equation}
    \HHHH^i(S)\cong\HH^{i-1}(\hilbn{2}{S},\tangent_{\hilbn{2}{S}})
  \end{equation}
  should become an isomorphism
  \begin{equation}
    \HHHH^i(S_{\mathrm{nc}})\cong\HH^{i-1}(\hilbn{2}{S_{\mathrm{nc}}},\tangent_{\hilbn{2}{S_{\mathrm{nc}}}}).
  \end{equation}
  Evidence for this is the dimension drop in the Sklyanin case when~$S=\mathbb{P}^2$, using \cite[proposition~12]{MR3024823} and the description of the Hochschild cohomology of noncommutative planes from \cite{1705.06098v1}.
\end{remark}

\subsection{Dimension \texorpdfstring{$\geq 3$}{at least 3}}
\label{subsection:dim-3-interpretation}
The following theorem can be deduced from the previous section, but we give a self-contained proof. We keep the notation from \eqref{equation:notation}.

\begin{theorem}
  \label{theorem:main-comparison}
  Let~$X$ be a smooth projective variety of dimension~$d$ at least~3, such that~$\mathcal{O}_X$ is exceptional. Then there are isomorphisms
  \begin{equation}
    \HH^i(X,\tangent_X)\overset{\cong}{\to}\HH^i(\hilbtwo,\tangent_{\hilbtwo})
  \end{equation}
  for all~$i\geq 0$.
\end{theorem}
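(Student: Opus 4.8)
The plan is to compute $\HH^i(\hilbtwo,\tangent_{\hilbtwo})$ directly from the geometry of the universal subscheme, reusing the identification of \cref{proposition:iso-tangent} together with the short exact sequence \eqref{equation:N} for the normal bundle, and to observe that the term which produced the extra summand in the surface case vanishes precisely because $d\geq 3$. This gives a proof that is logically independent of the degeneration statement of the previous section.

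First I would use \cref{proposition:iso-tangent} to write $\tangent_{\hilbtwo}\cong q_*(\mathcal{N}_{Z/X\times\hilbtwo})$. Since $q\colon Z\to\hilbtwo$ is the finite quotient by $\mathbb{Z}/2\mathbb{Z}$ and we work in characteristic~$0$, the functor $q_*$ is exact, so $\RRR q_*=q_*$ and the Leray spectral sequence degenerates, giving
\begin{equation}
  \HH^i(\hilbtwo,\tangent_{\hilbtwo})\cong\HH^i(\hilbtwo,q_*\mathcal{N}_{Z/X\times\hilbtwo})\cong\HH^i(Z,\mathcal{N}_{Z/X\times\hilbtwo})
\end{equation}
for all $i\geq 0$. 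This reduces the statement to a cohomology computation on $Z\cong\Bl_\Delta(X\times X)$.

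Next I would feed the short exact sequence \eqref{equation:N}, namely
\begin{equation}
  0\to \pi^*\circ p_1^*(\tangent_X)\to\mathcal{N}_{Z/X\times\hilbtwo}\to j_*\mathcal{O}_p(-2)\to 0,
\end{equation}
into the long exact sequence in sheaf cohomology on $Z$. By \cref{lemma:Z-to-X} the left-hand term is $\HH^i(Z,\pi^*\circ p_1^*\tangent_X)\cong\HH^i(X,\tangent_X)$, so everything comes down to killing the third term $\HH^i(Z,j_*\mathcal{O}_p(-2))$. As $j$ is a closed immersion this equals $\HH^i(E,\mathcal{O}_p(-2))$, which by Leray for the $\mathbb{P}^{d-1}$\dash bundle $p\colon E\to X$ is governed by $\RRR p_*\mathcal{O}_p(-2)$.

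The crux — and the only place the dimension hypothesis is used — is the vanishing $\RRR p_*\mathcal{O}_p(-2)=0$. For a $\mathbb{P}^{d-1}$\dash bundle one has $\RRR p_*\mathcal{O}_p(-l)=0$ exactly for $1\leq l\leq d-1$, and $l=2$ lands in this range precisely when $d\geq 3$. Hence $\HH^i(E,\mathcal{O}_p(-2))=0$ for all $i$, the long exact sequence collapses to the desired isomorphism $\HH^i(Z,\mathcal{N}_{Z/X\times\hilbtwo})\cong\HH^i(X,\tangent_X)$, and combining with the reduction above proves the theorem. I do not anticipate a genuine obstacle here; the only point requiring a little care is to check that the resulting isomorphism agrees with the natural comparison map in the statement, which one reads off by tracking the construction of \cref{proposition:iso-tangent}. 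It is also worth emphasising in the write-up that this is exactly where the surface case diverges: for $d=2$ the sheaf $\mathcal{O}_p(-2)$ restricts to $\mathcal{O}_{\mathbb{P}^1}(-2)$ on the fibres, so that $\RR^1 p_*\mathcal{O}_p(-2)\cong\omega_X^{-1}$ is nonzero and contributes the extra term $\HH^{i-1}(S,\omega_S^{-1})$ appearing in \cref{corollary:short-exact} and in Hitchin's sequence.
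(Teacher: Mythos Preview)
Your proposal is correct and follows essentially the same route as the paper's own proof: both reduce to $\HH^i(Z,\mathcal{N}_{Z/X\times\hilbtwo})$ via \cref{proposition:iso-tangent} and the exactness of $q_*$, then use the short exact sequence \eqref{equation:N} together with the vanishing $\RRR p_*\mathcal{O}_p(-2)=0$ for $d\geq 3$. The only cosmetic difference is that the paper first applies $\RRR\pi_*$ and then takes global sections (via Leray for $\pi$), whereas you take global sections directly on $Z$ and invoke Leray for $p$ on the third term; the content is identical.
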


\begin{proof}
  We first claim that
  \begin{equation}
    \label{equation:degeneration-isomorphism}
    \RRR\pi_*\mathcal{N}_{Z/X\times\hilbtwo}\cong p_1^*\tangent_X.
  \end{equation}
  To see this, recall that there is an exact sequence
  \begin{equation}
    0 \to \pi^*\circ p_1^*(\tangent_X) \to \mathcal{N}_{Z/X\times\hilbtwo} \to j_*\mathcal{O}_E(2E) \to 0,
  \end{equation}
  which was constructed in \eqref{equation:N}. Applying~$\RRR\pi_*$ to this sequence, using that~$\RRR\pi_*\mathcal{O}_Z \cong \mathcal{O}_{X \times X}$ and the projection formula, we obtain a short exact sequence
  \begin{equation}
    \label{equation:ses}
    0 \to p_1^*\tangent_X \to \pi_*\mathcal{N}_{Z/X\times\hilbtwo} \to \pi_*\circ j_*(\mathcal{O}_E(2E)) \to 0
  \end{equation}
  and isomorphisms
  \begin{equation}
    \RR^i\pi_*\mathcal{N}_{Z/X\times\hilbtwo} \cong \RR^i\pi_*\circ j_*(\mathcal{O}_E(2E)),
  \end{equation}
  for~$i\geq 1$. It now suffices to compute that in the fibers
  \begin{equation}
    (\RR^i\pi_*\circ j_*(\mathcal{O}_E(2E)))_x \cong \HH^i(\mathbb{P}^{d-1},\mathcal{O}_{\mathbb{P}^{d-1}}(-2))=0,
  \end{equation}
  for~any $x\in X$ and~$i \geq 0$. Note that the last equality uses the assumption that~$d$ is at least~3.

  The Leray spectral sequence for the blowup morphism~$\pi$ reads
  \begin{equation}
    \label{equation:spec-leray}
    \mathrm{E}_2^{i,j}=\HH^i(X \times X,\RR^j\pi_*\mathcal{N}_{Z/X\times\hilbtwo}) \Rightarrow \HH^{i+j}(Z,\mathcal{N}_{Z/X\times\hilbtwo})
  \end{equation}
  and hence degenerates at the second page by \eqref{equation:degeneration-isomorphism}. We obtain isomorphisms
  \begin{equation}
    \HH^i(X,\tangent_X)\cong \HH^i(X \times X,p_1^*\tangent_X) \xrightarrow{\cong} \HH^i(Z,\mathcal{N}_{Z/X\times\hilbtwo}),
  \end{equation}
  for~$i \geq 0$, using the K\"unneth formula for the first isomorphism. Since~$q_{*}$ is exact, and using \cref{proposition:iso-tangent}, we hence obtain isomorphisms
  \begin{equation}
    \label{equation:def-isos}
    \HH^i(X,\tangent_X) \xrightarrow{\cong} \HH^i(\hilbtwo,\tangent_{\hilbtwo}),
  \end{equation}
  for~$i \geq 0$.
\end{proof}

This gives a no-go result in noncommutative algebraic geometry. As discussed in \cref{remark:noncommutative-deformations} one could hope that the Hilbert square of~$\mathbb{P}^3$ has commutative deformations, which would shed light on the noncommutative deformations of~$\mathbb{P}^3$, but the following corollary rules this out.

\begin{corollary}
  Let~$X$ be of dimension~$d\geq 3$, such that~$\mathcal{O}_X$ is exceptional. If~$X$ is infinitesimally rigid, i.e.~$\HH^1(X,\tangent_X)=0$ then so is~$\hilbtwo$.
\end{corollary}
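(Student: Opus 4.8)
The plan is to read off this corollary directly from \cref{theorem:main-comparison}, which has just been established and which I am free to assume. That theorem supplies, for a smooth projective variety $X$ of dimension $d \geq 3$ with exceptional structure sheaf, a family of isomorphisms $\HH^i(X,\tangent_X) \overset{\cong}{\to} \HH^i(\hilbtwo,\tangent_{\hilbtwo})$ valid for all $i \geq 0$. The hypotheses of the corollary are identical to those of the theorem, so there is nothing to verify before invoking it.

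The single step I would carry out is to specialise the isomorphism to $i = 1$. By definition, $X$ being infinitesimally rigid means $\HH^1(X,\tangent_X) = 0$; transporting this vanishing across the isomorphism gives $\HH^1(\hilbtwo,\tangent_{\hilbtwo}) \cong \HH^1(X,\tangent_X) = 0$, which is precisely the statement that $\hilbtwo$ is infinitesimally rigid. Since $\HH^1(\hilbtwo,\tangent_{\hilbtwo})$ is the tangent space to the deformation functor $\Def_{\hilbtwo}$, its vanishing encodes rigidity in the usual Kodaira--Spencer sense.

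There is essentially no obstacle here: the corollary is a formal consequence of the degree-$1$ instance of \cref{theorem:main-comparison}, and all the genuine work has already been invested in proving that theorem — in particular the computation $\RRR\pi_*\mathcal{N}_{Z/X\times\hilbtwo} \cong p_1^*\tangent_X$ from \eqref{equation:degeneration-isomorphism}, which crucially used $d \geq 3$ through the vanishing $\HH^i(\mathbb{P}^{d-1},\mathcal{O}_{\mathbb{P}^{d-1}}(-2)) = 0$, together with the identification $\sheafExt^1_{p_{\hilbtwo}}(\mathcal{I},\mathcal{I}) \cong \tangent_{\hilbtwo}$ from \cref{proposition:iso-tangent}. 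The only point worth flagging for the reader is conceptual rather than technical: this is exactly where the higher-dimensional picture diverges from the surface case, since in dimension $2$ the extra term $\HH^{i-1}(S,\omega_S^{-1})$ appearing in \cref{corollary:short-exact} can be non-zero and can break rigidity, whereas for $d \geq 3$ that contribution disappears and rigidity is preserved.
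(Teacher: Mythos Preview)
Your proposal is correct and matches the paper's approach: the corollary is stated without proof in the paper, being treated as an immediate consequence of \cref{theorem:main-comparison} specialised to $i=1$, which is exactly what you do. Your additional remarks on why $d\geq 3$ is needed and the contrast with the surface case are accurate contextual commentary.
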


More generally, we obtain a comparison between the deformation functor for~$X$ and that of~$\hilbtwo$ as in \cref{theorem:fantechi}. If~$\mathcal{X}\to T$ is a smooth projective family, such that~$\mathcal{X}_0\cong X$, then taking the relative Hilbert scheme produces a natural transformation of deformation functors
\begin{equation}
  \Def_X\to\Def_{\hilbtwo}.
\end{equation}
As in \cite{MR1354269}, \cref{theorem:main-comparison} should imply that this is a natural equivalence, but we have not checked the details.

\printbibliography

\emph{Pieter Belmans}, \texttt{pbelmans@mpim-bonn.mpg.de} \\
\textsc{Max Planck Institute for Mathematics, Vivatsgasse 7, 53111 Bonn, Germany}

\emph{Lie Fu}, \texttt{fu@math.univ-lyon1.fr} \\
\textsc{Universit\'e Claude Bernard Lyon 1, 43 bd du 11 novembre 1918, 69622 Ville\-urbanne cedex, France}

\emph{Theo Raedschelders}, \texttt{theo.raedschelders@glasgow.ac.uk} \\
\textsc{University of Glasgow, University Place, Glasgow, G12 8SQ, United Kingdom}

\end{document}